\newtheoremstyle{obs}
{3pt}
{3pt}
{}
{}
{\bfseries}
{.}
{.5em}
{}
\theoremstyle{obs}
\newtheorem{remark}[theorem]{Remark}
\newtheorem{example}[theorem]{Example}
\newcommand{\lvec}[1]{\overleftarrow{#1}}
\newcommand{\rvec}[1]{\overrightarrow{#1}}
\newcommand{\im}{\textrm{Im}}
\newcommand{\lcf}{\lbrack\! \lbrack}
\newcommand{\rcf}{\rbrack\! \rbrack}
\newcommand{\lag}{\mathcal L}
\begin{document}
	\title{Retraction maps: A seed of geometric integrators. \\
 Part II: Symmetry and reduction}

 \author[1]{Mar\'ia\ Barbero-Li\~n\'an}
 \author[2]{Juan Carlos Marrero}
\author[3]{David\ Mart\'{\i}n de Diego}

\affil[1]{\small Departamento de Matem\'atica Aplicada, Universidad Polit\'ecnica de Madrid, Av. Juan de Herrera 4, 28040 Madrid, Spain. }
\affil[2]{\small ULL-CSIC Geometría Diferencial y Mecánica Geométrica, Departamento de Matemáticas, Estadística e Investigación Operativa and Instituto de Matemáticas y Aplicaciones (IMAULL), University of La Laguna, San Cristobal de La Laguna, Spain.}
\affil[3]{\small Instituto de Ciencias Matem\'aticas (CSIC-UAM-UC3M-UCM), C/Nicol\'as Cabrera 13-15, 28049 Madrid, Spain.}

	\date{\today}
	
	\maketitle

\abstract{In this paper we use  retraction and discretization maps (see \cite{21MBLDMdD}) as a tool for deriving in  a systematic way numerical integrators preserving geometric structures (such as symplecticity or Lie-Poisson structure), as well as methods that preserve symmetry and the associated discrete momentum map. The classical notion of a retraction map leads to the notion of discretization map extended here to the Lie algebroid of a Lie groupoid so that the configuration manifold is discretized, instead of the equations of motion. As a consequence, geometric integrators are obtained preserving the Lie-Poisson structure of the corresponding reduced system. 

   \vspace{2mm}

    \textbf{Keywords:} retraction and discretization maps, Lie groupoids, Lie algebroids, reduction, tangent lift, cotangent lift, Lagrangian submanifolds, symplectic groupoids, Poisson geometric integrators.

    \vspace{2mm}
    
\textbf{Mathematics Subject Classification (2020):} 37M15, 65P10, 53D17, 22A22, 70H33.}

\tableofcontents

\section{Introduction}

The motivation for this paper lies in the significant role that symmetry plays in the study of dynamical systems, both in their continuous and discrete forms \cite{Marsden-Ratiu,marsden-west}. Typically, this involves the study of mechanics in various phase spaces, each with its own geometric preservation properties (such as symplecticity, Poisson preservation, etc.).  Examples include the Euler-Lagrange equations on the tangent bundle 
 $TQ$, the Euler-Poincaré equations defined on the Lie algebra ${\mathfrak g}$ of a Lie group $G$, Lagrange-Poincaré equations on the quotient space $TQ/G$ (the Atiyah bundle), etc.(see~\cite{CMR01a}), and their corresponding hamiltonian versions on the dual spaces $T^*Q$, ${\mathfrak g}^*$ and $T^*Q/G$, respectively \cite{2003CendraHPeq}. 
In an effort to treat all these different dynamical systems within the same framework, we can work in the setting of Lagrangian mechanics on Lie algebroids or Hamiltonian mechanics with respect to linear Poisson structures \cite{weinstein96,martinez01,LeMaMa} gaining a unified mathematical perspective. The corresponding discrete counterpart is formulated in terms of Lie groupoids. For the aforementioned cases, the discrete equations are defined on $Q\times Q$, $G$ and $(Q\times Q)/G$ (see \cite{MMM06Grupoides,2015JCDavidLocalDiscrete} for discrete Lagrangian mechanics on Lie groupoids).

In particular, in this paper we show how to geometrically derive geometric integrators (\cite{serna,McLachlan-Quispel,hairer,casas-blanes}) preserving Poisson structure  for reduced systems using an appropriate and new notion of retraction (\cite{AbMaSeBookRetraction,Shub}) and discretization maps and the corresponding lift to tangent and cotangent bundles. The discretization maps introduced in~\cite{21MBLDMdD} are extended here to the more general setting of Lie groupoids where numerical integrators for reduced systems are defined. Additionally, our paper is related with geometric integrators which preserve constants of the motion (momentum map) as a consequence of invariance by a Lie group of symmetries. We explored this topic in  a previous paper~\cite{24IFAC}.

In order to extend the constructions in~\cite{21MBLDMdD} we introduce an equivalent definition of the symplectic integrators defined in that paper. 

\subsection{Symplectic integrators from discretization maps}

In \cite{21MBLDMdD} we have used some constructions of geometric mechanics (tangent and cotangent lifts, canonical sym\-plec\-to\-mor\-phisms, etc) for obtaining geometric integrators preserving symplecticity from discretization maps. We present here an equivalent definition of those integrators useful for the goals of the current paper.

In fact, if $L: TQ\rightarrow {\mathbb R}$ is a regular Lagrangian and $R_d: TQ\rightarrow Q\times Q$ is a discretization map, then using the Legendre map ${\mathcal F}L\colon TQ\rightarrow T^*Q$ and the cotangent lift of $R_d$ we can write the algorithm in Table 1 where $h\, dL(TQ)\subset T^*TQ$ is understood 
as the product by $h$ of the elements of $dL(TQ)$  with respect to the vector bundle structure $\zeta_{Q}: T^*TQ\rightarrow T^*Q$. Locally,
$
\zeta_Q(q, \dot{q}, p_q, p_{\dot{q}})=(q, p_{\dot{q}})
$ and
\begin{align*}\label{double-vector}
h\, dL(TQ)&=\{
(q, h\dot{q}, hp_q, p_{\dot{q}})\in T^*TQ\; \; | \;
(q, \dot{q}, p_q, p_{\dot{q}})\in dL(TQ)\}
\\
&=
\left\{ \left(q, h\dot{q}, h\frac{\partial L}{\partial q}, \frac{\partial L}{\partial \dot{q}}\right)\right\}\, .
\end{align*}

\begin{table}[h!]{{{\bf Table 1: Symplectic integrator} for a Lagrangian mechanical system 
}}
 \centering
  \begingroup
  \ttfamily\bfseries
  \renewcommand{\arraystretch}{1.2}
  \begin{tabular}{|l|}
   \hline
 input: $L: TQ\rightarrow {\mathbb R}$, $v_0\in TQ$ and $h>0$\\
     for \(k = 0\) to \(N-1\)\\
     \ \  define $p_k={\mathcal F}L(v_k)$\\
\ \ solve:  $p_{k+1}\in T^*Q$ such that $(-p_{k}, p_{k+1})\in  T^*R_d (h\, {\rm d}L(TQ))$\\
     end for\\
     solve: $v_N$ such that $p_N={\mathcal F}L(v_N)$\\
     output: \(v_{N}\)\\
     \hline
  \end{tabular}
\endgroup
\end{table}

Analogously, for a Hamiltonian function $H: T^*Q\rightarrow {\mathbb R}$, it is also needed to use the canonical antisymplectomorphism ${\mathcal I}_{ TQ}: T^*T^*Q\rightarrow T^*TQ$ between the symplectic manifolds $(T^*T^*Q, \omega_{T^*Q})$ and $(T^*TQ, \omega_{TQ})$ ~\cite{XuMac} and the cotangent lift $T^*R_d\colon T^*TQ\rightarrow T^*Q\times T^*Q$  
to write the algorithm in Table 2. Here, $h\, dH(T^*Q)$ is understood as a fiber multiplication with respect to the projection $\pi_{T^*Q}: T^*T^*Q\rightarrow T^*Q$, that is,
\begin{align*}
    h\, dH(T^*Q)&=
\left\{ \left(q, p, h\frac{\partial H}{\partial q}, h\frac{\partial H}{\partial p}\right)\right\}
\end{align*}
\begin{table}[h!]{{{\bf Table 2: Symplectic integrator} for a Hamiltonian mechanical system  
}}
  \centering
  \begingroup
  \ttfamily\bfseries
  \renewcommand{\arraystretch}{1.2}
  \begin{tabular}{|l|}
   \hline
 input: $H: T^*Q\rightarrow {\mathbb R}$, $p_0\in T^*Q$ and $h>0$\\
     for \(k = 0\) to \(N-1\)\\
\ \ solve:  $p_{k+1}\in T^*Q$ such that \\ \qquad $(-p_{k}, p_{k+1})\in  (T^*R_d\circ {\mathcal I}_{TQ}) (h \, {\rm d}H(T^*Q))$\\
     end for\\
     output: \(p_{N}\)\\
     \hline
  \end{tabular}
\endgroup
\end{table}

\subsection{Aim of the paper}

In the current work, using an important result on symplectic groupoids~\cite{coste} which relates Lagrangian bisections on a symplectic groupoid with Poisson transformations on the base space of the Lie groupoid, we generalize the previous methods to the case  of Lie algebroids and groupoids. We also elucidate the geometrical setting of the corresponding theories. 
As a consequence, the same discrete scheme is used to obtain at once geometric integrators for all the above mentioned cases and automatically obtaining methods preserving the corresponding Poisson structure defined on the dual of the Lie algebroid \cite{LeMaMa}. 

To be more precise, we consider the particular case when the Lie groupoid is a Lie group $G$. We will denote the left and right translations in $G$ by $L_g$ and $R_g$, respectively, for any $g\in G$. The geometric developments described in Section~\ref{Sec:CoLift} makes possible a general construction of Lie-Poisson integrators that can be algorithmically written as follows.  Let ${\mathfrak g}$ be the corresponding Lie algebra of $G$ and $\tau: {\mathfrak g}\rightarrow G$ be a retraction map \cite{Iserles2000,iserles,bourabee04}. Then, the methods for Lagrangian and Hamiltonian  systems given by a reduced Lagrangian function ${\bm l}: {\mathfrak g}\rightarrow {\mathbb R}$ and a reduced Hamiltonian function ${\bm h}: {\mathfrak g}^*\rightarrow {\mathbb R}$, respectively, are obtained in Table 3 and Table 4.
 
\begin{table}[h!]{{{\bf Table 3: Lie-Poisson integrator} for a Lagrangian system on ${\mathfrak g}$ 
}}
  \centering
  \begingroup
  \ttfamily\bfseries
  \renewcommand{\arraystretch}{1.2}
  \begin{tabular}{|l|}
  \hline
     input: ${\bm l}: {\mathfrak g}\rightarrow {\mathbb R}$, $\xi_0\in {\mathfrak g}$ and $h>0$\\
     define $\mu_0=\frac{\partial {\bm l}}{\partial \xi}(\xi_0)\in {\mathfrak g}^* $\\
     for \(k = 0\) to \(N-1\)\\
    \ \ solve:  $g_k\in G$ such that $(T_{g_k}R_{g^{-1}_{k}})^*(\mu_k)\in T^*\tau (h \, {\rm d} {\bm l}({\mathfrak g}))$\\
   \ \   define: $\mu_{k+1}=Ad^*_{g_k}\mu_k$\\
     end for\\
      solve: $\xi_N$ such that $\mu_N=\frac{\partial {\bm l}}{\partial \xi}(\xi_N)$\\
     output: \(\xi_{N}\in {\mathfrak g}\)\\
     \hline
  \end{tabular}
\endgroup
\end{table}

\begin{table}[h!]{{{\bf Table 4: Lie-Poisson integrator} for a Hamiltonian system on ${\mathfrak g}^*$ 
}}
  \centering
  \begingroup
  \ttfamily\bfseries
  \renewcommand{\arraystretch}{1.2}
  \begin{tabular}{|l|}
\hline
     input: ${\bm h}: {\mathfrak g}^*\rightarrow {\mathbb R}$, $\mu_0\in {\mathfrak g}^*$ and $h>0$\\
     for \(k = 0\) to \(N-1\)\\
  \ \   solve:  $g_k\in G$ such that $(T_{g_k}R_{g^{-1}_{k}})^*(\mu_k)\in (T^*\tau \circ {\mathcal I}_{\mathfrak g}) (h \, {\rm d} \, {\bm h}({\mathfrak g}^*))$\\
 \ \    define: $\mu_{k+1}=Ad^*_{g_k}\mu_k$\\
     end for\\
     output: \(\mu_{N}\in {\mathfrak g}^* \)\\
     \hline
  \end{tabular}
\endgroup
\end{table}
In Section~\ref{Sec:Integrator} we show that using these methods we obtain consistent integrators for the continuous dynamics that can be used as building blocks for constructing higher order integrators.

\subsection{Structure of the paper}

The paper is structured as follows. In Section~\ref{section1}, we review the notion of a discretization map on the pair groupoid. Section~\ref{Sec:General} contains the notion of a generalized discretization map on the more general setting of Lie groupoids. In Section~\ref{Sec:reduction}, discretization maps are obtained from a reduction process because of the existence of symmetries in a mechanical system. Sections~\ref{Sec:Lift} and~\ref{Sec:CoLift} describe the tangent and cotangent lifts of generalized discretization maps on Lie groupoids. With all the  constructions already described and the theory of symplectic groupoids, Poisson integrators for reduced systems are obtained in Section~\ref{Sec:Integrator}. Such methods are applied to explicit examples such as a rigid body and a heavy top. Finally, Section~\ref{Sec:conclusion} exposes the conclusions and some future research lines with some possible applications in optimization of neural networks with symmetry \cite{10.5555/3540261.3542225,Zhao2022SymmetriesFM} and in obtaining geometric integrators for second order differential equations on Lie algebroids. The well-known results and constructions for Lie groupoids and symplectic groupoids have been summarized in the appendices. These results are needed along the paper.

\section{Retraction maps and discretization maps}\label{section1}
In~\cite{AbMaSeBookRetraction,Shub} a retraction map on a manifold $Q$ is a smooth map 
$R\colon U\subseteq TQ \rightarrow Q$ where $U$ is an open subset containing the zero section  of the tangent bundle 
such that the restriction map $R_q=R_{|T_qQ}\colon T_qQ\rightarrow Q$ satisfies
\begin{enumerate}
	\item $R_q(0_q)=q$ for all $q\in Q$,

	\item 
	${\rm D}R_q(0_q)=T_{0_q}R_q={\rm Id}_{T_qQ}$ with the identification $T_{0_q}T_qQ\simeq T_qQ$.
\end{enumerate}

\begin{example}
If  $(Q, g)$ is a Riemannian manifold, then the exponential map $\hbox{exp}^g: U\subset TQ\rightarrow Q$ is a typical example of a retraction map: 
$
\hbox{exp}^g_q(v_q)=\gamma_{v_q}(1), 
$
where $\gamma_{v_q}$ is the unique  Riemannian geodesic~\cite{doCarmo} satisfying $\gamma_{v_q}(0)=q$ and $\gamma'_{v_q}(0)=v_q$. 
\end{example}

In \cite{21MBLDMdD}  retraction maps have been used to define discretization maps $R_d\colon U \subset TQ  \rightarrow Q\times Q$, where  $U$ is an open neighbourhood of the zero section of $TQ$, 
\begin{eqnarray*}
	R_d\colon U \subset TQ & \longrightarrow & Q\times Q\\
	v_q & \longmapsto & (R^1(v_q),R^2(v_q))\, .
\end{eqnarray*}
Discretization maps satisfy the following properties:
\begin{enumerate}
	\item $R_d(0_q)=(q,q)$, for all $q\in Q$.
	\item $T_{0_q}R^2_q-T_{0_q}R^1_q={\rm Id}_{T_qQ} \colon T_{0_q}T_qQ\simeq T_qQ \rightarrow T_qQ$ is equal to the identity map on $T_qQ$ for any $q$ in $Q$.
\end{enumerate}
As a consequence, the discretization map $R_d$ is a local diffeomorphism.

\begin{example}
Examples of discretization maps on Euclidean vector spaces are:
\begin{itemize}
	\item Explicit Euler method:  $R_d(q,v)=(q,q+v).$
	\item Midpoint rule:  $R_d(q,v)=\left( q-\dfrac{v}{2}, q+\dfrac{v}{2}\right).$
		\item $\theta$-methods with $\theta\in [0,1]$:  \hspace{3mm} $R_d(q,v)=\left( q-\theta \, v, q+ (1-\theta)\, v\right).$
\end{itemize}
\end{example}

An interesting observation deals with the underlying geometry in the retraction and discretization maps. Note that $Q\times Q$ has a Lie groupoid structure and $TQ$ is the corresponding Lie algebroid (see Appendix~\ref{App:lieGroupoid}). It will be shown in Section~\ref{Sec:General} that the Lie groupoid is the geometric structure that makes possible to extend the notion of discretization maps so that geometric integrators can be defined for a wider family of mechanical systems. Many of the properties satisfied by the above mentioned discretization maps have to deal with the algebraic structure associated with the Lie groupoid (set of identities, inversion map...).

\section{Generalized discretization maps on Lie grou\-poids}\label{Sec:General}
In a previous paper~\cite{24IFAC}, we have analyzed the relation between the symmetry of the numerical method and the preservation of constants of motion associated to the original continuous momentum map. The Lie group invariance leads to the reduction of the system. That is why it is worthwhile to define discretization maps for reduced systems following the framework introduced in \cite{weinstein96} once the notion of retraction map given in \cite{AbMaSeBookRetraction} is extended to the setting of Lie algebroids and Lie groupoids. 

Let us first motivate the above-mentioned extension. A discretization map is a smooth map $R_d\colon TQ \rightarrow Q\times Q$ that satisfies two conditions recalled in the previous section. On the other hand, it is well-known that $Q\times Q$ is a Lie groupoid with Lie algebroid the tangent bundle of $Q$, that is, $TQ$. In fact, the map \begin{eqnarray*}
    TQ& \longrightarrow & A(Q\times Q) \\ v_q & \longmapsto & (0_q,v_q),
\end{eqnarray*}
is a vector bundle isomorphism between $TQ$ and the Lie algebroid $A(Q\times Q)$ (see Appendix~\ref{App:ExamplePairGroup}). Some notation is introduced to state the following result. The zero section of $TQ$ is denoted by $0\colon Q\rightarrow TQ$, $\epsilon\colon Q\rightarrow Q\times Q$ is the identity section of $Q\times Q$, $\epsilon=\Delta\colon Q\rightarrow Q\times Q$ is the diagonal map given by $\Delta(q)=(q,q)$, $\alpha\colon Q\times Q \rightarrow Q$ is the source map given by the canonical projection onto the first factor, that is, $\alpha={\rm pr}_1\colon Q\times Q \rightarrow Q$ (for more details see Appendix~\ref{App:ExamplePairGroup}).

After introducing the structural maps of the Lie algebroid $TQ$ and the Lie groupoid $Q\times Q$, the properties to be satisfied by the discretization maps can be rewritten in terms of those structural maps. 

\begin{proposition}\label{Prop:previous}
    Let $R_d\colon TQ \rightarrow Q\times Q$ be a smooth map. It is a discretization map if and only if:
    \begin{enumerate}
        \item For every $q\in Q$, $R_d(0_q)=\epsilon (q,q)=(q,q)$, that is, the following diagram is commutative:
        \begin{equation*}
\xymatrix{	TQ \ar[rr]^{R_d} && Q\times Q \\ & Q \ar[ul]^{0} \ar[ur]_{\epsilon}& }
\end{equation*}  
\item If $R_q=R_d|_{T_qQ}$ for every $q\in Q$, then 
$$T_{0_q}R_q\left((v_q)^{\mathtt{v}}_{0_q} \right)=(0_q,v_q)+\left[T_q\epsilon \circ T_{\epsilon(q)}\alpha \circ T_{0_q}R_q\right]((v_q)^{\mathtt{v}}_{0_q}), $$
for all $v_q\in T_qQ$, where $(v_q)^{\mathtt{v}}_{0_q}$ denotes the vertical lift given by $\left.\dfrac{\rm d}{{\rm d}t}\right|_{t=0}(tv_q)$.
    \end{enumerate}
\end{proposition}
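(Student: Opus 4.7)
The plan is to observe that condition~(1) of the proposition is literally the first condition in the original definition of a discretization map, so the only real work is to verify that condition~(2) is equivalent to the classical requirement $T_{0_q}R^2_q - T_{0_q}R^1_q = {\rm Id}_{T_qQ}$ on $T_qQ$, via the canonical identification $T_{0_q}T_qQ \cong T_qQ$ that sends the vertical lift $(v_q)^{\mathtt{v}}_{0_q}$ to $v_q$.

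First I would unpack the structural maps. Since $\alpha = {\rm pr}_1$ and $\epsilon = \Delta$, for any $(w_1, w_2)$ in $T_{(q,q)}(Q \times Q) \cong T_qQ \oplus T_qQ$ one has $T_{\epsilon(q)}\alpha(w_1, w_2) = w_1$ and $T_q\epsilon(u) = (u, u)$. Writing $R_d = (R^1, R^2)$, the tangent map decomposes as $T_{0_q}R_q(X) = (T_{0_q}R^1_q(X),\, T_{0_q}R^2_q(X))$ for $X \in T_{0_q}T_qQ$, so the composition $T_q\epsilon \circ T_{\epsilon(q)}\alpha \circ T_{0_q}R_q$ evaluated at $X$ equals $(T_{0_q}R^1_q(X),\, T_{0_q}R^1_q(X))$.

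Substituting $X = (v_q)^{\mathtt{v}}_{0_q}$ into condition~(2), the left-hand side reads $(T_{0_q}R^1_q(X),\, T_{0_q}R^2_q(X))$ while the right-hand side reads $(0_q, v_q) + (T_{0_q}R^1_q(X),\, T_{0_q}R^1_q(X))$. The first components match automatically, and equality of the second components reduces to $T_{0_q}R^2_q(X) - T_{0_q}R^1_q(X) = v_q$, which under the vertical lift identification is exactly the original condition. Reversing the chain of equalities gives the converse.

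The only subtle point I expect is notational: the term $(0_q, v_q)$ naturally lives in the algebroid $A(Q\times Q)$, and must be correctly reinterpreted as an element of $T_{(q,q)}(Q\times Q) \cong T_qQ \oplus T_qQ$ via the vector bundle isomorphism stated just before the proposition. Once this identification and the vertical lift convention are fixed, the argument reduces to a straightforward component-wise matching of the two sides of the identity.
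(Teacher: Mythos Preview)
Your argument is correct and follows essentially the same approach as the paper's proof: both reduce condition~(2) to the componentwise identity by computing $T_{0_q}R_q\big((v_q)^{\mathtt{v}}_{0_q}\big)=(T_{0_q}R^1_q(v_q),\,T_{0_q}R^2_q(v_q))$ and $T_q\epsilon\circ T_{\epsilon(q)}\alpha\circ T_{0_q}R_q\big((v_q)^{\mathtt{v}}_{0_q}\big)=(T_{0_q}R^1_q(v_q),\,T_{0_q}R^1_q(v_q))$, then matching components. Your additional remark on the identification of $(0_q,v_q)\in A(Q\times Q)$ with an element of $T_{(q,q)}(Q\times Q)$ is a helpful clarification that the paper leaves implicit.
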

\begin{proof}
    The first condition corresponds with the first condition in the definition of a discretization map recalled in Section~\ref{section1}.

    Let us prove now the second condition. If $R_d(v_q)=(R^1(v_q),R^2(v_q))$, we must prove that $T_{0_q}R^2_q-T_{0_q}R^1_q\colon T_{0_q}(T_qQ)\simeq T_qQ \rightarrow T_qQ$ is the identity map on $T_qQ$, for all $q\in Q$, if and only if the second condition in the statement holds. 

    If $v_q\in T_qQ$, a straightforward computation shows that 
\begin{equation*}T_{0_q}R_q\left((v_q)^{\mathtt{v}}_{0_q} \right)=\left( T_{0_q}R^1_q(v_q), \;T_{0_q}R^2_q(v_q) \right),\end{equation*}
and \begin{equation*}(0_q,v_q)+\left[T_q\epsilon \circ T_{\epsilon(q)}\alpha \circ T_{0_q}R_q\right]((v_q)^{\mathtt{v}}_{0_q})=\left( T_{0_q}R^1_q(v_q), \;v_q+T_{0_q}R^1_q(v_q) \right) .\end{equation*}
    This concludes the proof.
    
\end{proof}

Now, let $G$ be an arbitrary Lie groupoid over $Q$ with the source and target maps $\alpha,\beta\colon G\rightarrow Q$, respectively, and the identity section $\epsilon\colon Q \rightarrow G$ such that the dimensions of $Q$ and $G$ are $n$, $n+r$, respectively. Another structural maps associated with a groupoid structure are the multiplication map $m\colon G_2\rightarrow G$, where $G_2$ is the set of composable maps, and the inversion map $i\colon G\rightarrow G$.  A Lie groupoid is denoted by $G\rightrightarrows Q$. The corresponding Lie algebroid of $G$ is  a real vector bundle $\tau\colon AG\rightarrow Q$ whose fibers are given by the vertical fiber of the source map, that is, $A_qG=V_{\epsilon(q)}\alpha$ for every $q\in Q$. For more
 	information about this concept, we refer the reader to Appendix~\ref{App:lieGroupoid} and  the monographs \cite{2021CrainicBook} and \cite{Mackenzie}.

Proposition~\ref{Prop:previous} leads to the following generalized definition.

\begin{definition} \label{Def:newdiscretization}
    A smooth map ${\mathcal R}\colon AG \rightarrow G$ is a generalized discretization map on the Lie groupoid $G$ if it satisfies the following properties:
    \begin{enumerate}
        \item ${\mathcal R}\circ 0_{AG}= \epsilon$, where $0_{AG}\colon Q \rightarrow AG$ is the zero section of $AG$, that is, the following diagram is commutative: 
\begin{equation*}
\xymatrix{	AG \ar[rr]^{\mathcal R} && G \\ & Q \ar[ul]^{0_{AG}} \ar[ur]_{\epsilon}& }
\end{equation*}  
\item If ${\mathcal R}_q= \left.{\mathcal R}\right|_{A_qG}$ for every $q\in Q$, then as $\alpha(\epsilon(q))=q$,
$$\left(T_{0_{AG}(q)}{\mathcal R}_q\right)\left(\left(v_q\right)^{\mathtt{v}}_{0_{AG}(q)}\right)=v_q+\left[ (T_q\epsilon)\circ (T_{\epsilon(q)}\alpha) \circ (T_{0_{AG}(q)}{\mathcal R}_q)\right]\left(\left(v_q\right)^{\mathtt{v}}_{0_{AG}(q)}\right)$$
for all $v_q\in A_qG$, where $\left(v_q\right)^{\mathtt{v}}_{0_{AG}(q)}$ denotes the vertical lift given by $\left.\dfrac{\rm d}{{\rm d}t}\right|_{t=0}(t v_q)$.
    \end{enumerate}
\end{definition}
Note that using {\it 1.} in Definition~\ref{Def:newdiscretization}, $T_{0_{AG}(q)}{\mathcal R}_q\colon T_{0_{AG}(q)}(A_qG) \simeq A_qG\rightarrow T_{\epsilon(q)}G$ and \begin{equation}\label{eq:splittingTG} T_{\epsilon(q)}G\simeq A_qG\oplus (T_q\epsilon)(T_qQ) \quad \forall \; q\in Q.\end{equation}

In fact, the linear map \begin{eqnarray*}
T_{\epsilon(q)}G&\longrightarrow &A_qG \oplus (T_q\epsilon) (T_qQ) 
\\ v_{\epsilon(q)}& \longrightarrow &\left(v_{\epsilon(q)}- (T_q\epsilon)(T_{\epsilon(q)}\alpha)(v_{\epsilon(q)}),(T_q\epsilon)(T_{\epsilon(q)}\alpha)(v_{\epsilon(q)})\right)
\end{eqnarray*}
is the isomorphism that guarantees Equation~\eqref{eq:splittingTG}. 

Proposition~\ref{Prop:previous} guarantees that the notion in Definition~\ref{Def:newdiscretization} includes the concept of discretization maps in~\cite{21MBLDMdD} and generalizes that notion to Lie groupoids. As a consequence, reduced systems, optimal control problems with symmetries and many other problems can be studied using the geometric tools provided in this paper.

Next, we introduce the adjoint of a generalized discretization on a Lie groupoid $G$ using the inversion map $i: G\rightarrow G$ in $G$.

\begin{proposition}
If ${\mathcal R}: AG\rightarrow G$ is a generalized discretization map, then 
${\mathcal R}^*: AG\rightarrow G$ defined by 
${\mathcal R}^*(v_q)=(i\circ {\mathcal R})(-v_q)$ is also a generalized discretization map and it is called the adjoint of ${\mathcal R}$.
\end{proposition}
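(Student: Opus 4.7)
My plan is to verify the two conditions of Definition~\ref{Def:newdiscretization} for $\mathcal{R}^*$ separately. The first condition follows immediately from the fact that the zero section is fixed by the negation map on the vector bundle $AG$ and that the inversion map $i$ fixes the identity section of $G$: $\mathcal{R}^*(0_{AG}(q)) = i(\mathcal{R}(-0_{AG}(q))) = i(\mathcal{R}(0_{AG}(q))) = i(\epsilon(q)) = \epsilon(q)$.

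The tangent condition is where the work lies. Writing $\mathcal{R}^*_q = i \circ \mathcal{R}_q \circ n_q$, where $n_q : A_qG\to A_qG$ is the negation $n_q(v_q)=-v_q$, the chain rule yields
\begin{equation*}
T_{0_{AG}(q)}\mathcal{R}^*_q\bigl((v_q)^{\mathtt v}_{0_{AG}(q)}\bigr)=T_{\epsilon(q)}i\,\bigl(T_{0_{AG}(q)}\mathcal{R}_q\bigl((-v_q)^{\mathtt v}_{0_{AG}(q)}\bigr)\bigr),
\end{equation*}
since $T_{0_{AG}(q)}n_q$ sends $(v_q)^{\mathtt v}_{0_{AG}(q)}$ to $(-v_q)^{\mathtt v}_{0_{AG}(q)}$. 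Denote by $w$ the vector $T_{0_{AG}(q)}\mathcal{R}_q((-v_q)^{\mathtt v}_{0_{AG}(q)})\in T_{\epsilon(q)}G$. Applying condition \emph{2.} of Definition~\ref{Def:newdiscretization} to $\mathcal{R}$ gives the splitting $w=-v_q+T_q\epsilon(u)$ in $A_qG\oplus (T_q\epsilon)(T_qQ)$, where $u=T_{\epsilon(q)}\alpha(w)\in T_qQ$.

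The key ingredient is then the behaviour of $T_{\epsilon(q)}i$ on the splitting~\eqref{eq:splittingTG}. Because $i\circ\epsilon=\epsilon$, vectors in $(T_q\epsilon)(T_qQ)$ are fixed by $T_{\epsilon(q)}i$. On the other hand, for $a\in A_qG$, using $\alpha\circ i=\beta$ one finds $T_{\epsilon(q)}\alpha(T_{\epsilon(q)}i(a))=T_{\epsilon(q)}\beta(a)=\rho(a)$, where $\rho:AG\to TQ$ is the anchor; together with the standard groupoid identity (specialising to the Lie group case, where $\rho=0$, to the well-known $T_ei=-\mathrm{Id}$) this gives the formula $T_{\epsilon(q)}i(a)=-a+T_q\epsilon(\rho(a))$. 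I would record this as a small lemma if not already available from Appendix~\ref{App:lieGroupoid}; a short curve-based argument (or checking on the pair groupoid as a sanity check, where $i$ is the swap of factors) confirms it.

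Combining,
\begin{equation*}
T_{\epsilon(q)}i(w)=T_{\epsilon(q)}i(-v_q)+T_q\epsilon(u)=v_q-T_q\epsilon(\rho(v_q))+T_q\epsilon(u)=v_q+T_q\epsilon\bigl(u-\rho(v_q)\bigr).
\end{equation*}
Since $T_{\epsilon(q)}\alpha$ kills $v_q\in A_qG$ and acts as the identity after $T_q\epsilon$, one checks that $T_q\epsilon\circ T_{\epsilon(q)}\alpha$ applied to the right-hand side reproduces exactly $T_q\epsilon(u-\rho(v_q))$, so the required identity
\begin{equation*}
T_{0_{AG}(q)}\mathcal{R}^*_q\bigl((v_q)^{\mathtt v}_{0_{AG}(q)}\bigr)=v_q+\bigl[(T_q\epsilon)\circ(T_{\epsilon(q)}\alpha)\circ T_{0_{AG}(q)}\mathcal{R}^*_q\bigr]\bigl((v_q)^{\mathtt v}_{0_{AG}(q)}\bigr)
\end{equation*}
holds. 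The main subtlety, and essentially the only non-formal point of the proof, is the derivation (or citation) of the formula for $T_{\epsilon(q)}i$ on the algebroid summand; once that is in hand the rest is an application of the chain rule together with condition~\emph{2.} applied to $\mathcal{R}$ itself.
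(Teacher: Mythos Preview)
Your proof is correct and follows essentially the same route as the paper's. Both verify condition~1 using $i\circ\epsilon=\epsilon$, then for condition~2 compute $T_{0_{AG}(q)}\mathcal{R}^*_q$ via the chain rule, invoke condition~2 for $\mathcal{R}$, and apply the key identity $T_{\epsilon(q)}i(a)=-a+T_q\epsilon(\rho(a))$ for $a\in A_qG$ (which the paper writes equivalently as $T_{\epsilon(q)}i(v_q)=-v_q+T_{\epsilon(q)}(\epsilon\circ\beta)(v_q)$, since $\rho=T_{\epsilon(q)}\beta|_{A_qG}$); the only cosmetic difference is that the paper finishes by re-expressing the $\epsilon\circ\beta$ term through $\beta\circ i=\alpha$ to land directly on $T_q\epsilon\circ T_{\epsilon(q)}\alpha\circ T_{0_{AG}(q)}\mathcal{R}^*_q$, whereas you reach the same endpoint by checking that the projection $T_q\epsilon\circ T_{\epsilon(q)}\alpha$ applied to your expression returns the correct second summand.
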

\begin{proof}
We have that:
\begin{enumerate}
    \item Obviously ${\mathcal R}^*\circ 0_{AG}=\epsilon$ since $(i\circ {\mathcal R})(-0_{AG}(q))=(i\circ \epsilon)(q)=\epsilon(q)$.
\item Moreover, if $v_q\in A_qG$, we have that
\begin{align*}
& \left(T_{0_{AG}(q)}{\mathcal R}^*_q\right)\left(\left(v_q\right)^{\mathtt{v}}_{0_{AG}(q)}\right)=
\left(T_{0_{AG}(q)}(i\circ {\mathcal R}_q\right)\left(\left(-v_q\right)^{\mathtt{v}}_{0_{AG}(q)}\right)\\
&=T_{\epsilon(q)}i\left[
-v_q-\left[ (T_q\epsilon)\circ (T_{\epsilon(q)}\alpha) \circ (T_{0_{AG}(q)}{\mathcal R}_q)\right]\left(\left(v_q\right)^{\mathtt{v}}_{0_{AG}(q)}\right)\right]
\\
&=v_q-T_{\epsilon(q)}(\epsilon\circ \beta)(v_q)-\left[(T_q\epsilon)\circ(T_{\epsilon(q)}\alpha) \circ (T_{0_{AG}(q)}{\mathcal R}_q)\right]\left(\left(v_q\right)^{\mathtt{v}}_{0_{AG}(q)}\right)
\\
&=v_q-\left[(T_q\epsilon)\circ(T_{\epsilon(q)}\beta)\right]\left(v_q+\left[ (T_q\epsilon)\circ (T_{\epsilon(q)}\alpha) \circ (T_{0_{AG}(q)}{\mathcal R}_q)\right]\left(\left(v_q\right)^{\mathtt{v}}_{0_{AG}(q)}\right)\right)\\
&=v_q-\left[ (T_q\epsilon)\circ (T_{\epsilon(q)}\beta)\circ  (T_{0_{AG}(q)}{\mathcal R}_q)\right]\left(\left(v_q\right)^{\mathtt{v}}_{0_{AG}(q)}\right)\\
&=v_q+\left[ (T_q\epsilon)\circ (T_{\epsilon(q)}\alpha) \circ (T_{0_{AG}(q)}{\mathcal R}^*_q)\right]\left(\left(v_q\right)^{\mathtt{v}}_{0_{AG}(q)}\right)
\end{align*}
where we use that 
\[
T_{\epsilon(q)}i(v_q)=-v_q+T_{\epsilon(q)}(\epsilon\circ \beta)(v_q)
\]
and the following relations: $i\circ \epsilon=\epsilon$, $\epsilon\circ \beta\circ \epsilon=\epsilon$ and $\beta\circ i=\alpha$.
\end{enumerate}
\end{proof}

Finally, we introduce the following definition. 
\begin{definition}
   We say that ${\mathcal R}: AG\rightarrow G$ is a {\bf symmetric generalized discretization map} if 
   ${\mathcal R}={\mathcal R}^*$.
\end{definition}

\subsection{Local expression of a generalized discretization map}\label{Sec:Local}
In order to work with generalized discretization maps on Lie groupoids and derive geometric integrators, the local characterization is useful.  Details on local description for discrete mechanics on Lie groupoids can be found in~\cite{2015JCDavidLocalDiscrete}.

Given a point $q_0\in Q$ take local coordinates $(q^i,u^\gamma)$ on $G$ for $i=1,\dots,n=\dim Q$ and $\gamma=1,\dots, r=\dim G-\dim Q$, defined in a neighbourhood ${\mathcal U}$ of $\epsilon(q_0)$, adapted to the source map $\alpha$. Therefore, $\alpha (q^i,u^\gamma)=(q^i)$. Additionally we assume that the identities are elements with coordinates $(q^i, u^{\gamma}=0)$.

 A local basis of the sections of $\tau\colon AG\rightarrow Q$ is given by: $\left\{ e_\gamma=\dfrac{\partial}{\partial u^\gamma} \circ \epsilon \right\}$ where we use the identification $A_{q}G\equiv V_{\epsilon(q)}\alpha=\ker (T_{\epsilon(q)}\alpha)$.
Therefore, we induce local coordinates on the Lie algebroid $AG$ as $(q^i,y^\gamma)$.

Thus, a local basis using  the splitting in Equation~\eqref{eq:splittingTG} is:
\begin{eqnarray*}
    T_{\epsilon(q)}G&\simeq & A_qG \oplus (T_q\epsilon)(T_qQ)\\
    \left\langle \left.\dfrac{\partial}{\partial q^i}\right|_{\epsilon(q)}, \left.\dfrac{\partial}{\partial u^\gamma}\right|_{\epsilon(q)}\right\rangle&\simeq&   \left\langle  \left.\dfrac{\partial}{\partial y^\gamma}\right|_{\epsilon(q)} \right\rangle \oplus  \left\langle (T_q\epsilon) \left( \left.\dfrac{\partial}{\partial q^i}\right|_{q}\right)\right\rangle \, .
\end{eqnarray*}
Locally, a smooth map
\begin{equation*}
\begin{array}{lrl}
    {\mathcal R}\colon & AG \longrightarrow & G\\
    & (q^i,y^\gamma)  \longmapsto & \left({\mathcal R}^i(q,y),{\mathcal R}^\gamma(q,y)\right)\, ,
    \end{array}
\end{equation*}
is a generalized discretization map on a Lie groupoid if
\begin{enumerate}
    \item ${\mathcal R}^i(q,0)=q^i$ and ${\mathcal R}^\gamma(q,0)=0$ for all $i=1,\dots, n$ and $\gamma=1,\dots, r$.
    \item $\left( \left.\dfrac{\partial {\mathcal R}^\gamma}{\partial y^{\gamma'}}\right|_{\epsilon(q)=(q,0)}\right)={\rm Id}_{r\times r}$, where ${\rm Id}_{r\times r}$ is the identity matrix of size $ r\times r$.
\end{enumerate}

This local description of Definition~\ref{Def:newdiscretization} makes easier to prove the following result. 

\begin{theorem}\label{thm:diffeo} If ${\mathcal R}\colon AG\rightarrow G$ is a generalized discretization map on the Lie groupoid $G$, then ${\mathcal R}$ is a diffeomorphism from an open neighbourhood of the zero section of $AG$ to an open neighbourhood of the identity section of $G$.
\end{theorem}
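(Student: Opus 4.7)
The plan is to apply the inverse function theorem at every point of the zero section $0_{AG}(Q)\subset AG$ and then upgrade the resulting family of local diffeomorphisms to a diffeomorphism between open neighbourhoods of $0_{AG}(Q)$ and of $\epsilon(Q)$. Since $\dim AG = n+r = \dim G$, the map $\mathcal{R}$ has equal source and target dimensions, so invertibility of the differential on the zero section is what must be checked.

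First, I would compute $T_{0_{AG}(q)}\mathcal{R}$ with respect to the natural splittings of source and target. On the source side, $T_{0_{AG}(q)}(AG)$ decomposes as a horizontal part $T_q 0_{AG}(T_qQ)$ and a vertical part $A_qG$ (via the vertical lift). On the target side, Equation~\eqref{eq:splittingTG} gives $T_{\epsilon(q)}G\simeq (T_q\epsilon)(T_qQ)\oplus A_qG$. The first condition of Definition~\ref{Def:newdiscretization}, namely $\mathcal{R}\circ 0_{AG}=\epsilon$, implies that horizontal vectors $T_q 0_{AG}(X_q)$ are sent to $T_q\epsilon(X_q)$, contributing only to the $(T_q\epsilon)(T_qQ)$ factor. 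The second condition states precisely that on a vertical vector $(v_q)^{\mathtt{v}}_{0_{AG}(q)}$ the $A_qG$-component of the image is exactly $v_q$, while the $(T_q\epsilon)(T_qQ)$-component is arbitrary. Thus in block form $T_{0_{AG}(q)}\mathcal{R}$ reads
\begin{equation*}
\begin{pmatrix} T_q\epsilon & * \\ 0 & \mathrm{Id}_{A_qG} \end{pmatrix},
\end{equation*}
which is upper triangular with isomorphisms on the diagonal, hence an isomorphism. (Equivalently, in the local coordinates $(q^i,y^\gamma)$ of Section~\ref{Sec:Local} the Jacobian at $(q,0)$ has the block form with $\mathrm{Id}_{n\times n}$ and $\mathrm{Id}_{r\times r}$ on the diagonal, giving determinant $1$.)

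By the inverse function theorem, for each $q\in Q$ there exists an open neighbourhood $W_q\subset AG$ of $0_{AG}(q)$ such that $\mathcal{R}|_{W_q}$ is a diffeomorphism onto an open subset of $G$. Setting $W=\bigcup_{q\in Q}W_q$ yields an open neighbourhood of the zero section on which $\mathcal{R}$ is a local diffeomorphism, and the restriction $\mathcal{R}|_{0_{AG}(Q)}=\epsilon$ is a (global) diffeomorphism onto the identity section, which is an embedded submanifold of $G$.

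The main obstacle is passing from "local diffeomorphism on $W$" to an honest diffeomorphism onto a neighbourhood of $\epsilon(Q)$, because two distinct points of $W$ over different base points of $Q$ might a priori have the same image. This is handled by the standard tubular-neighbourhood-style argument: $\mathcal{R}$ is a local diffeomorphism on $W$ and injective on the closed submanifold $0_{AG}(Q)$, so one can shrink $W$ to an open neighbourhood $U$ of the zero section on which $\mathcal{R}$ is injective. Injectivity plus local diffeomorphism then gives that $\mathcal{R}(U)$ is open in $G$ and $\mathcal{R}\colon U\to \mathcal{R}(U)$ is a diffeomorphism. Since $\mathcal{R}(U)\supseteq \mathcal{R}(0_{AG}(Q))=\epsilon(Q)$, it is the required neighbourhood of the identity section.
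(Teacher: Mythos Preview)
Your argument is correct and follows essentially the same route as the paper: compute the differential of $\mathcal{R}$ at each point of the zero section, use the two defining conditions to see it is block upper-triangular with isomorphisms on the diagonal, and apply the inverse function theorem. The paper carries this out in the adapted local coordinates of Section~\ref{Sec:Local}, obtaining precisely the Jacobian matrix you mention parenthetically; your intrinsic formulation via the splitting~\eqref{eq:splittingTG} is the coordinate-free version of the same computation. The final paragraph in which you pass from ``local diffeomorphism along the zero section'' to an actual diffeomorphism between neighbourhoods is a standard step that the paper's proof leaves implicit.
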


\begin{proof}
The above-mentioned local coordinates are used to complete the proof. The map ${\mathcal R}\colon AG\rightarrow G$ is a local diffeomorphism if the Jacobian matrix at $\epsilon(q)=(q,0)$ is non-singular. Locally,
$$ J{\mathcal R} (q,0)=\begin{pmatrix}
\dfrac{\partial {\mathcal R}^i}{\partial q^j} & \dfrac{\partial {\mathcal R}^i}{\partial y^\gamma} \\[1em]
\dfrac{\partial {\mathcal R}^\gamma}{\partial q^j} & \dfrac{\partial {\mathcal R}^\gamma}{\partial y^{\gamma'}}
\end{pmatrix}_{(q,0)}=\begin{pmatrix} {\rm Id}_{n\times n} & \dfrac{\partial {\mathcal R}^i}{\partial y^\gamma} \\   0 &  {\rm Id}_{r\times r}\end{pmatrix}_{(q,0)}$$
because of the local expression of the two properties in Definition~\ref{Def:newdiscretization}. 
\end{proof}

It is clear that the discretization maps on the pair groupoid are precisely the discretization maps introduced in~\cite{21MBLDMdD}. Now, some examples of generalized discretization maps on other Lie groupoids are described because they play a key role to obtain numerical integrators for different mechanical systems as shown in Section~\ref{Sec:Integrator}.

\begin{example} \label{Ex:LieAlgebra}
    Let $G$ be a Lie group. The Lie groupoid $G\rightrightarrows \{e\}$ has the Lie algebra $\mathfrak{g}=T_eG$ as Lie algebroid where $e$ is the identity element in the Lie group (see Appendix~\ref{App:ExampleLieGroup}). A generalized discretization map ${\mathcal R}\colon \mathfrak{g} \rightarrow G$ on $G$ is a smooth map that satisfies 
    \begin{enumerate}
        \item ${\mathcal R}(0_e)=e$,
        \item $T_{0_e}{\mathcal R}\colon T_{0_e} {\mathfrak{g}}\simeq {\mathfrak g}\rightarrow {\mathfrak g}=T_eG$ is the identity map.
    \end{enumerate}
    That construction is useful, for instance, for mechanical problems that model the rigid body equations (see \cite{bourabee04} and Example \ref{section7.4}).
\end{example}

\begin{example} Let $\pi\colon Q\rightarrow M$ be a surjective submersion. The set
$$G=Q\times_\pi Q=\{(q,q')\in Q\times Q\; | \; \pi(q)=\pi(q')\}$$
is a Lie subgroupoid $G\rightrightarrows Q$ of the pair groupoid with Lie algebroid $\tau\colon AG=V\pi\rightarrow Q$, where $V\pi$ is the vertical subbundle associated with $\pi$, that is, $V\pi=\{X\in TQ\; |\; T\pi(X)=0\}$. A smooth map ${\mathcal R}\colon V\pi \rightarrow Q\times_\pi Q$, ${\mathcal R}(X_q)=({\mathcal R}^1(X_q),{\mathcal R}^2(X_q))$ for $X_q\in V_q\pi$,  is a generalized discretization on $G$ if it satisfies
\begin{enumerate}
    \item ${\mathcal R}(0_q)=(q,q)$,
    \item $T_{0_q}{\mathcal R}^2_q-T_{0_q}{\mathcal R}^1_q\colon V_q\pi \rightarrow T_qQ$ is the canonical inclusion over $V_q\pi$ in $T_qQ$.
\end{enumerate}
    This is related to the geometric integrators for holonomic mechanics described in the preprint~\cite{2023BarMar} since $V\pi$ defines an integrable foliation and each leaf of this foliation can be identified with a submanifold of $Q$.
\end{example}

\begin{example}\label{example-action} Let $H$ be a Lie group and 
$\cdot: Q \times H \rightarrow Q$, $(q, {\mathbf h})\mapsto q {\mathbf h}$, be a right action of $H$ on $Q$. Consider the Lie groupoid 
$G=Q \rtimes H \rightrightarrows Q$ called the right action groupoid,  with source map $\alpha(q,{\mathbf h})=q$, target map $\beta(q,{\mathbf h})=q\, {\mathbf h}$, identity section $\epsilon(q)=(q,e)$, inversion map $i(q,{\mathbf h})=(q\, {\mathbf h},  {\mathbf h}^{-1})$ and product map
\[
m((q,{\mathbf h}), (q{\mathbf h}, {\mathbf h}'))=(q,{\mathbf h}{\mathbf h}')\, .
\]
The  Lie algebroid of the right action groupoid is just the right action algebroid   $\tau\colon AG=Q\rtimes \mathfrak{h}\rightarrow Q$, where $\mathfrak{h}$ is the Lie algebra of $H$ (see Appendix~\ref{App:action}).  A generalized discretization map ${\mathcal R}\colon Q \rtimes \mathfrak{h}\rightarrow Q \rtimes H$, ${\mathcal R}(q, \xi)=({\mathcal R}^1(q,\xi),{\mathcal R}^2(q,\xi))$ is a smooth map such that 
\begin{enumerate}
    \item ${\mathcal R}(q,0_{\mathfrak{h}})=(q,e)$,
    \item $T_{0_{\mathfrak{h}}}{\mathcal R}_q\colon \mathfrak{h}\rightarrow T_qQ\times \mathfrak{h}$ is given by $\xi\mapsto \left( T_{0_h}{\mathcal R}^1_q(\xi),\xi \right)$.
\end{enumerate}

     That construction is useful for mechanical problems that model, for instance, the heavy top (see Example \ref{example:heavy}).
\end{example}
\begin{example}
The Riemmanian exponential map is the most natural retraction (or discretization map) to use on a Riemannian manifold \cite{AbMaSeBookRetraction}.  This idea can be generalized to the case when we have  a non-degenerate bundle metric  $g: AG\times_Q AG\rightarrow {\mathbb R}$  on a Lie algebroid.
This bundle metric defines a homogeneous quadratic SODE $\Gamma_g$ on $AG$  and its associated exponential map 
\[
\begin{array}{rrcl}
\hbox{exp}_{\Gamma_g}:& U\subseteq AG&\longrightarrow&  G
\end{array}
\]
where  $U$  is an  open subset of $AG$ over  the restriction of the zero section to  $\tau(U)\subseteq Q$ (see details in \cite{MMM3-1}). The map $\hbox{exp}_{\Gamma_g}$ is another example of  a generalized discretization map and, in general, it  is also a generalized discretization map, the exponential map associated to any homogeneous quadratic SODE (see Section 4 in \cite{MMM3-1}). 

\end{example}
\section{Reduction of generalized discretization maps}\label{Sec:reduction}

As stated in Noether's theorem, the existence of symmetries in mechanical systems  makes possible to reduce the original system to a system with less degrees of freedom, see for instance~\cite{AM87}. We are interested in discretizing the original system so that the conserved quantities are preserved after reducing the generalized discretization maps described in Section~\ref{Sec:General}.

First, following Appendix~\ref{App:lieGroupoid},  a Lie groupoid isomorphism $(\Phi,\phi)$ between the Lie groupoids $G\rightrightarrows Q$ and $G'\rightrightarrows Q'$ defines the corresponding Lie algebroid isomorphism $(A\Phi,\phi)$ such that $ A\Phi(v_{q})=T_q\Phi (v_{q})\in AG'$ for all 
$v_{q}\in A_qG$. Having in mind the following commutative diagrams,
 \begin{equation*}
\xymatrix{
	G\ar[rr]^{\Phi} \ar[d]<2pt>^{\beta} \ar[d]<-2pt>_{\alpha} && G' \ar[d]<2pt>^{\beta'} \ar[d]<-2pt>_{\alpha'} && AG\ar[rr]^{A\Phi} \ar[d]^{\tau} && AG' \ar[d]^{\tau'}  \\ 
	Q  \ar[rr]^{\phi} && Q' && Q  \ar[rr]^{\phi} && Q' }
\end{equation*} 
it is possible to use a discretization map ${\mathcal R}\colon AG\rightarrow G$ on the Lie groupoid $G$ to define a discretization map ${\mathcal R}'\colon AG'\rightarrow G'$ on the Lie groupoid $G'$ and vice-versa. 

\begin{theorem} \label{th:GtoG'} Let $(\Phi,\phi)$ and $(A\Phi,\phi)$ be a Lie groupoid isomorphism and the corresponding  Lie algebroid isomorphism, respectively. If ${\mathcal R}\colon AG\rightarrow G$ is a generalized discretization map on the Lie groupoid $G$, then ${\mathcal R}'=\Phi\circ {\mathcal R}\circ A\Phi^{-1}\colon AG'\rightarrow G'$ is a generalized discretization map on the Lie groupoid $G'$. 
\end{theorem}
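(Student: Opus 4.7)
The plan is to verify the two properties of Definition~\ref{Def:newdiscretization} for $\mathcal{R}'=\Phi\circ\mathcal{R}\circ A\Phi^{-1}$ by pulling everything back through the isomorphism $(\Phi,\phi)$ and invoking the analogous properties for $\mathcal{R}$, together with the standard compatibility of a Lie groupoid morphism with source maps and identity sections. Fix $q'\in Q'$ and set $q=\phi^{-1}(q')$; then $\mathcal{R}'_{q'}=\Phi\circ\mathcal{R}_{q}\circ (A\Phi^{-1})|_{A_{q'}G'}$.

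For the first property, I would observe that because $A\Phi$ is a vector bundle morphism over $\phi$, its inverse satisfies $A\Phi^{-1}\circ 0_{AG'}=0_{AG}\circ\phi^{-1}$. Composing with $\mathcal{R}$ and using property 1 for $\mathcal{R}$ gives $\mathcal{R}\circ 0_{AG}\circ\phi^{-1}=\epsilon\circ\phi^{-1}$, and finally the Lie groupoid morphism relation $\Phi\circ\epsilon=\epsilon'\circ\phi$ yields $\mathcal{R}'\circ 0_{AG'}=\epsilon'$. That is the easy half.

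For the second property, the main step is a chain-rule computation. Since $(A\Phi^{-1})|_{A_{q'}G'}\colon A_{q'}G'\to A_qG$ is linear, its differential at the zero vector sends vertical lifts to vertical lifts:
\[
T_{0_{AG'}(q')}(A\Phi^{-1})\bigl((v_{q'})^{\mathtt{v}}_{0_{AG'}(q')}\bigr)=(\tilde v_q)^{\mathtt{v}}_{0_{AG}(q)},\qquad \tilde v_q:=A\Phi^{-1}(v_{q'}).
\]
Applying $T_{0_{AG}(q)}\mathcal{R}_q$ and using property 2 for $\mathcal{R}$ gives $\tilde v_q + [T_q\epsilon\circ T_{\epsilon(q)}\alpha\circ T_{0_{AG}(q)}\mathcal{R}_q]((\tilde v_q)^{\mathtt v})$. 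Now apply $T_{\epsilon(q)}\Phi$: the first summand becomes $A\Phi(\tilde v_q)=v_{q'}$, while for the second I would invoke $\Phi\circ\epsilon=\epsilon'\circ\phi$ and $\alpha'\circ\Phi=\phi\circ\alpha$, which after differentiation rewrite $T_{\epsilon(q)}\Phi\circ T_q\epsilon\circ T_{\epsilon(q)}\alpha$ as $T_{q'}\epsilon'\circ T_{\epsilon'(q')}\alpha'\circ T_{\epsilon(q)}\Phi$. Finally, collapsing $T_{\epsilon(q)}\Phi\circ T_{0_{AG}(q)}\mathcal{R}_q\circ T_{0_{AG'}(q')}(A\Phi^{-1})=T_{0_{AG'}(q')}\mathcal{R}'_{q'}$ by the chain rule yields exactly the identity required by property 2 for $\mathcal{R}'$.

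The main obstacle is purely notational bookkeeping: tracking the base points $0_{AG}(q)$, $0_{AG'}(q')$, $\epsilon(q)$ and $\epsilon'(q')$ through each differential, and making sure the identification $T_{0}(A_qG)\simeq A_qG$ used in the vertical lift is transported correctly by the linearity of $A\Phi^{-1}$ on fibers. No genuinely hard geometric input is needed beyond the functoriality of the Lie algebroid construction and the defining relations of a Lie groupoid morphism recalled in Appendix~\ref{App:lieGroupoid}.
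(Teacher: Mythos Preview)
Your proof is correct and follows essentially the same route as the paper's: both arguments establish the compatibility relations $A\Phi^{-1}\circ 0_{AG'}=0_{AG}\circ\phi^{-1}$, $\Phi\circ\epsilon=\epsilon'\circ\phi$, $\alpha'\circ\Phi=\phi\circ\alpha$, then verify property~2 by a chain-rule computation that transports vertical lifts through the linear map $A\Phi$ and invokes property~2 for $\mathcal{R}$. The only cosmetic difference is that the paper parametrizes by $v_q\in A_qG$ and concludes via surjectivity of $A\Phi$, whereas you parametrize directly by $v_{q'}\in A_{q'}G'$ and set $\tilde v_q=A\Phi^{-1}(v_{q'})$; the content is identical.
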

\begin{proof}
    We have the following diagrams
 \begin{equation*}
\xymatrix{
Q \ar[rr]<-4pt>_{0_{AG}}  \ar[d]^{\phi} && AG  \ar[ll]<-4pt>_{\tau} \ar[rr]^{\mathcal R} && G \ar[d]^{\Phi}  \ar[rr]<4pt>_{\beta} \ar[rr]<8pt>^{\alpha}  && Q \ar[d]^{\phi} \ar[ll]<6pt>^{\epsilon} \\  Q'\ar[rr]<-4pt>_{0_{AG'}}  && AG'   \ar[u]^{A\Phi^{-1}} \ar[ll]<-4pt>_{\tau'}  \ar[rr]^{{\mathcal R}'} && G'  \ar[rr]<6pt>^{\alpha'}  \ar[rr]<2pt>_{\beta'} && Q' \ar[ll]<9pt>^{\epsilon'} }
\end{equation*} 
    Then, using the definition of a Lie groupoid isomorphism, the following facts are obtained:
    \begin{align}
        &A\Phi^{-1}\circ 0_{AG'}=0_{AG}\circ \phi^{-1},\label{equation-1}\\
       &\Phi\circ \epsilon=\epsilon'\circ \phi,\label{equation-2}\\
       & \phi\circ \alpha=\alpha'\circ \Phi,\label{equation-3}\\ 
    &{\mathcal R}'_{\phi(q)}\circ (A\Phi)_{A_qG}=\Phi\circ {\mathcal R}_q \quad \mbox{ for every  } q\in Q, \label{equation-4}
    \end{align}
    because of the definition of ${\mathcal R}'$. In particular, the previous diagrams are commutative.
    Now, if $v_q\in A_qG$ then from (\ref{equation-4}) we deduce that
    \begin{align*}
    T_{0_{AG'}(\phi(q))}{\mathcal R}'((A\Phi)(v_q))^{\rm v}_{0_{AG'}(\phi(q))}&=
    \frac{d}{dt}\Big|_{t=0}\left((\Phi\circ {\mathcal R})(tv_q)\right)\\
    &=(T_q\Phi)(T_{0_{AG}(q)}{\mathcal R})\left((v_q)^{\rm v}_{0_{AG}(q)}\right)\, .
    \end{align*}
Then, using that ${\mathcal R}$ is a generalized discretization map (see Definition \ref{Def:newdiscretization}) it follows that 
 \begin{align*}
    T_{0_{AG'}(\phi(q))}{\mathcal R}'((A\Phi)(v_q))^{\rm v}_{0_{AG'}(\phi(q))}&=A\Phi(v_q)\\&+(T_{0_{AG}(q)}(\Phi\circ \epsilon\circ \alpha\circ {\mathcal R}_q))(v_q)^{\rm v}_{0_{AG}(q)}\, .
\end{align*}
Together with~\eqref{equation-1},~\eqref{equation-2} and~\eqref{equation-3}, we have 
 \begin{align*}
    T_{0_{AG'}(\phi(q))}{\mathcal R}'((A\Phi)(v_q))^{\rm v}_{0_{AG'}(\phi(q))}&=A\Phi(v_q)\\&+(T_{0_{AG}(q)}( \epsilon'\circ \alpha'\circ {\mathcal R}'_{\phi(q)}\circ (A\Phi)_{|A_qG})(v_q)^{\rm v}_{0_{AG}(q)}\\
    &=A\Phi(v_q)\\&+(T_{0_{AG'}(\phi(q))}( \epsilon'\circ \alpha'\circ {\mathcal R}'_{\phi(q)})((A\Phi)(v_q)^{\rm v}_{0_{AG}(q)})\, .
\end{align*}
As $A\Phi: AG\rightarrow AG'$ is a Lie algebroid isomorphism, the above equality proves that ${\mathcal R}'$ is a generalized discretization map. 
\end{proof}

The generalized discretization map ${\mathcal R}'$ in Theorem~\ref{th:GtoG'} is said to be isomorphic to the generalized discretization map ${\mathcal R}$.

Now, a free and proper action of a Lie group $H$ on the Lie groupoid $G\rightrightarrows Q$, $$\Phi\colon H\times G \rightarrow G,$$ by Lie groupoid isomorphisms may be used to develop reduction as follows. For every ${\mathbf h}\in H$, the pair $(\Phi_{\mathbf h},\phi_{\mathbf h})$ is a Lie groupoid isomorphism where $\Phi_{\mathbf h}(g)=\Phi({\mathbf h},g)$  for $g\in G$. Then, the following diagram is commutative:
 \begin{equation*}
\xymatrix{
	G\ar[rr]^{\Phi_{\mathbf h}} \ar[d]<2pt>^{\beta} \ar[d]<-2pt>_{\alpha} && G \ar[d]<2pt>^{\beta} \ar[d]<-2pt>_{\alpha}  \\ 
	Q  \ar[rr]^{\phi_{\mathbf h}} &&  Q }
\end{equation*} 

The action $(\Phi,\phi)$ of $H$ on $G\rightrightarrows Q$ induces an action $(A\Phi,\phi)$ of $H$ on the Lie algebroid $AG\rightarrow Q$ by Lie algebroid isomorphisms. In fact, for every ${\mathbf h}\in H$  the following diagram is commutative:
 \begin{equation*}
\xymatrix{
	AG\ar[rr]^{A\Phi_{\mathbf h}} \ar[d]^{\tau} && AG \ar[d]^{\tau}  \\ 
	Q  \ar[rr]^{\phi_{\mathbf h}} &&  Q }
\end{equation*} 
As $\phi$ is a free and proper action of $H$ on $Q$, the pair $(\Phi,\phi)$ is a free and proper action of $H$ on $G$ and $(A\Phi,\phi)$ is also a free and proper action of $H$ on $AG$ by Lie algebroid isomorphisms. Thus, the  quotient manifolds $G/H$ and $Q/H$ are well-defined. The objects associated with the quotient manifolds will be denoted by $\widehat{\cdot}$.

\begin{proposition} Let $\phi$ be a free and proper action of $H$ on $Q$. The quotient manifold $\widehat{G}=G/H$ is a Lie groupoid over $\widehat{Q}=Q/H$ with Lie algebroid $\widehat{AG}=AG/H\stackrel{\widehat{\tau}}{\rightarrow} \widehat{Q}$ in such a way that the couple $(\pi_G,\pi_Q)$ is a Lie groupoid epimorphism and $(\pi_{AG},\pi_Q)$ is the associated Lie algebroid epimorphism, where $\pi_G\colon G\rightarrow \widehat{G}$, $\pi_{AG}\colon AG \rightarrow \widehat{AG}$ and $\pi_Q\colon Q\rightarrow \widehat{Q}$ are the canonical projections. 
\end{proposition}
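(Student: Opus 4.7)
The plan is to first use the free and proper assumption on $\phi$ to obtain smooth manifold structures on $\widehat{G}=G/H$ and $\widehat{Q}=Q/H$ (together with the fact, already noted in the excerpt, that $(\Phi,\phi)$ and $(A\Phi,\phi)$ are then free and proper on $G$ and $AG$), and then descend every structural map of $G\rightrightarrows Q$ using the $H$-equivariance that follows from each $(\Phi_{\mathbf h},\phi_{\mathbf h})$ being a Lie groupoid isomorphism. Concretely, from $\phi_{\mathbf h}\circ\alpha=\alpha\circ\Phi_{\mathbf h}$, $\phi_{\mathbf h}\circ\beta=\beta\circ\Phi_{\mathbf h}$, $\Phi_{\mathbf h}\circ\epsilon=\epsilon\circ\phi_{\mathbf h}$, $\Phi_{\mathbf h}\circ i=i\circ\Phi_{\mathbf h}$ and $m\circ(\Phi_{\mathbf h}\times\Phi_{\mathbf h})=\Phi_{\mathbf h}\circ m$, the maps $\alpha,\beta,\epsilon,i$ descend to $\widehat{\alpha},\widehat{\beta},\widehat{\epsilon},\widehat{i}$ on the quotients. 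For the multiplication, if $([g_1],[g_2])\in(\widehat{G})_2$, i.e.\ $[\alpha(g_1)]=[\beta(g_2)]$, then freeness of $\phi$ supplies a unique ${\mathbf h}\in H$ with $\alpha(g_1)=\beta(\Phi_{\mathbf h}(g_2))$, making $[m(g_1,\Phi_{\mathbf h}(g_2))]$ representative-independent and defining $\widehat{m}$; smoothness follows by working in local slices for the $H$-action on $G$.

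The groupoid axioms (associativity, units, inverses) for $\widehat{G}\rightrightarrows\widehat{Q}$ are then obtained by pushing forward the corresponding identities on $G$ through the surjective submersion $\pi_G$, while surjectivity and the submersion property of $\widehat{\alpha}$ follow from those of $\alpha$ together with the submersion $\pi_Q$. By construction the pair $(\pi_G,\pi_Q)$ satisfies $\pi_Q\circ\alpha=\widehat{\alpha}\circ\pi_G$, $\pi_Q\circ\beta=\widehat{\beta}\circ\pi_G$, $\pi_G\circ\epsilon=\widehat{\epsilon}\circ\pi_Q$, $\pi_G\circ i=\widehat{i}\circ\pi_G$ and $\pi_G\circ m=\widehat{m}\circ(\pi_G\times\pi_G)$, which makes it a surjective Lie groupoid morphism, i.e.\ an epimorphism.

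For the Lie algebroid, since $(A\Phi,\phi)$ acts freely and properly by Lie algebroid isomorphisms, $\widehat{AG}=AG/H$ is a smooth vector bundle over $\widehat{Q}$ with projection $\widehat{\tau}$ characterised by $\widehat{\tau}\circ\pi_{AG}=\pi_Q\circ\tau$. The anchor $\rho\colon AG\to TQ$ is $H$-equivariant with respect to the tangent-lifted action on $TQ$, so it descends to an anchor $\widehat{\rho}\colon\widehat{AG}\to T\widehat{Q}$; and because each $A\Phi_{\mathbf h}$ preserves brackets, $H$-invariant sections are closed under $[\cdot,\cdot]$ and in bijective correspondence with sections of $\widehat{AG}$, providing a compatible Lie bracket on $\Gamma(\widehat{AG})$. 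To identify this quotient Lie algebroid with the intrinsic Lie algebroid of $\widehat{G}$, I would use the Lie-functorial construction: the groupoid morphism $(\pi_G,\pi_Q)$ induces a Lie algebroid morphism $A\pi_G\colon AG\to A\widehat{G}$, which is $H$-invariant by equivariance and factors through $\pi_{AG}$ to a bundle map $\widehat{AG}\to A\widehat{G}$; a dimension count (fiber rank $\dim G-\dim Q=\dim\widehat{G}-\dim\widehat{Q}$ on both sides) together with injectivity on fibers (whose kernel is exactly the $H$-orbit direction in $AG$) shows it is a fiberwise isomorphism, hence a Lie algebroid isomorphism.

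The main obstacle I anticipate is the verification of well-definedness and smoothness of the quotient multiplication $\widehat{m}$ and, in parallel, the proof that the bracket on $\widehat{AG}$ coming from $H$-invariant sections coincides with the intrinsic bracket of $A\widehat{G}$. Both hinge on constructing local $H$-slices adapted to the groupoid structure of $G$ (and to the vector bundle structure of $AG$) so that the descent of the corresponding maps can be checked in coordinates; this is standard but technical. Once it is done, the epimorphism property of $(\pi_{AG},\pi_Q)$ reduces to the surjectivity of $\pi_{AG}$ together with the fact that the associated map on sections preserves anchors and brackets by construction.
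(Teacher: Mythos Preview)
Your proposal is correct and follows essentially the same route as the paper: descend $\alpha,\beta,\epsilon,i,m$ to the quotients using the $H$-equivariance that comes from each $(\Phi_{\mathbf h},\phi_{\mathbf h})$ being a groupoid isomorphism, conclude that $(\pi_G,\pi_Q)$ is a Lie groupoid epimorphism, and then repeat the argument for the Lie algebroid. You are actually more careful than the paper on two points it only sketches --- the well-definedness of $\widehat{m}$ and the identification $\widehat{AG}\cong A\widehat{G}$ --- and apart from a harmless swap of $\alpha$ and $\beta$ in your composability condition (the paper uses $\beta(g_1)=\alpha(g_2)$), nothing needs to change.
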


\begin{proof}
    From the definition of a Lie groupoid morphism (see Appendix~\ref{App:lieGroupoid}), we deduce that the structural maps $\alpha$, $\beta$, $\epsilon$ and $i$ of the Lie groupoid $G\rightrightarrows Q$ are $H$-equivariant. Then, the structural maps of $\widehat{G}=G/H\rightrightarrows \widehat{Q}=Q/H$ are the smooth maps
    $$\widehat{\alpha},\; \widehat{\beta}\colon \widehat{G}\rightarrow \widehat{Q}, \quad \widehat{\epsilon}\colon \widehat{Q}\rightarrow \widehat{G},\quad \widehat{i}\colon \widehat{G}\rightarrow \widehat{G},$$
    characterized by the following conditions:
    \begin{equation}\label{eq:structuralReduced} 
      \begin{array}{lll}  \widehat{\alpha}\circ \pi_G=\pi_Q\circ \alpha, && \widehat{\beta}\circ \pi_G=\pi_Q\circ \beta,\\
          \widehat{\epsilon}\circ \pi_Q=\pi_G\circ \epsilon, && \widehat{i}\circ \pi_G=\pi_G\circ i\, .\end{array}
    \end{equation}
    In addition, let $(\widehat{G}\times \widehat{G})_2$ be the set of composable pairs, that is, all the points $(\widehat{g}_1,\widehat{g}_2)\in \widehat{G}\times \widehat{G}$ such that $\widehat{\beta}(\widehat{g}_1)=\widehat{\alpha}(\widehat{g}_2)$. There exists a smooth map $\widehat{m}\colon (\widehat{G}\times \widehat{G})_2\rightarrow \widehat{G}$, the partial multiplication of $\widehat{G}$, such that the following diagram
\begin{equation*}
\xymatrix{
	 (G\times G)_2\ar[rr]^{m}  \ar[d]_{\pi_G\times {\pi_G}}   && G \ar[d]^{\pi_G}  \\ 
	 (\widehat{G}\times \widehat{G})_2 \ar[rr]^{\widehat{m}} &&  \widehat{G} }
\end{equation*} 
    is commutative 
    because the partial multiplication $m\colon (G\times G)_2\rightarrow G$ is $H$-equivariant.

    From the definition of the structural maps $\widehat{\alpha}$, $\widehat{\beta}$ and $\widehat{m}$
    of the Lie groupoid $\widehat{G}\rightrightarrows \widehat{Q}$, it is obtained that the canonical projection $\pi_G\colon G\rightarrow \widehat{G}$ is a Lie groupoid epimorphism over $\pi_Q\colon Q\rightarrow \widehat{Q}$.

    As for every $\mathbf{h}\in H$, $A\Phi_{\mathbf{h}}\colon AG \rightarrow AG$ is a Lie algebroid isomorphism over $\phi_{\mathbf{h}}\colon Q\rightarrow Q$ and proceeding as in the Lie groupoid cases, we obtain that there exists a Lie algebroid structure on the quotient vector bundle $\widehat{AG}=AG/H\rightarrow \widehat{Q}$ such that the canonical projection $\pi_{AG}\colon AG \rightarrow \widehat{AG}$ is a Lie algebroid epimorphism over $\pi_Q\colon Q\rightarrow \widehat{Q}$.

    As $\tau\colon AG \rightarrow Q$ is the Lie algebroid of the Lie groupoid $G\rightrightarrows Q$, it is satisfied that $\widehat{\tau}\colon \widehat{AG}\rightarrow \widehat{Q}$ is the Lie algebroid of the Lie groupoid $\widehat{G}\rightrightarrows \widehat{Q}$.
\end{proof}
Next, suppose that a generalized discretization map ${\mathcal R}\colon AG \rightarrow G$ is $H$-equivariant, that is, for ${\mathbf h}\in H$ the following diagram is commutative for the corresponding actions:
\begin{equation*}
\xymatrix{
	 AG\ar[rr]^{\mathcal R}  \ar[d]_{A \Phi_{\mathbf h}}   && G \ar[d]^{\Phi_{\mathbf h}}  \\ 
	 AG \ar[rr]^{\mathcal R} &&  G}
\end{equation*} 

Then, it is clear that ${\mathcal R}$ induces a smooth map
$$\widehat{\mathcal R}\colon \widehat{AG}\rightarrow \widehat{G}$$
such that 
\begin{equation}\label{eq:Rreduced}\widehat{R}\circ \pi_{AG}=\pi_G\circ {\mathcal R}.\end{equation}
Moreover, sufficient conditions to obtain a generalized discretization map on the quotient spaces can be stated. 
\begin{theorem}
\label{thm:discreteQuotient}
    If ${\mathcal R}\colon AG \rightarrow G$ is a $H$-equivariant generalized discretization map on $G$, then $\widehat{\mathcal R}\colon \widehat{AG}\rightarrow \widehat{G}$ is a generalized discretization map on the quotient Lie groupoid $\widehat{G}\rightrightarrows \widehat{Q}$.
\end{theorem}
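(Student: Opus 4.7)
The plan is to verify the two conditions of Definition~\ref{Def:newdiscretization} for $\widehat{\mathcal{R}}$ by pushing the corresponding conditions for $\mathcal{R}$ through the surjective submersions $\pi_Q$, $\pi_G$, $\pi_{AG}$, and invoking the defining identity~\eqref{eq:Rreduced} together with the intertwining relations~\eqref{eq:structuralReduced}. Throughout, I will exploit the key fact that, because the $H$-action on $AG$ is by Lie algebroid (hence vector bundle) isomorphisms, the projection $\pi_{AG}$ is fiberwise linear and so commutes with the zero section, $\pi_{AG} \circ 0_{AG} = 0_{\widehat{AG}} \circ \pi_Q$, and with the vertical lift along the fibers of $\tau$.

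For the first condition, I compose $\widehat{\mathcal{R}} \circ 0_{\widehat{AG}}$ with $\pi_Q$ and chain the equalities
$$\widehat{\mathcal{R}} \circ 0_{\widehat{AG}} \circ \pi_Q = \widehat{\mathcal{R}} \circ \pi_{AG} \circ 0_{AG} = \pi_G \circ \mathcal{R} \circ 0_{AG} = \pi_G \circ \epsilon = \widehat{\epsilon} \circ \pi_Q,$$
using successively the commuting of $\pi_{AG}$ with zero sections, the identity~\eqref{eq:Rreduced}, the first condition of Definition~\ref{Def:newdiscretization} for $\mathcal{R}$, and the last relation in~\eqref{eq:structuralReduced}. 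Since $\pi_Q$ is a surjective submersion, cancellation gives $\widehat{\mathcal{R}} \circ 0_{\widehat{AG}} = \widehat{\epsilon}$.

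For the second condition, fix $\hat{q}\in\widehat{Q}$, choose $q\in Q$ with $\pi_Q(q)=\hat{q}$, and given $\hat{v}_{\hat{q}}\in A_{\hat{q}}\widehat{G}$ pick $v_q\in A_qG$ with $\pi_{AG}(v_q)=\hat{v}_{\hat{q}}$. Fiberwise linearity of $\pi_{AG}$ yields
$$(T_{0_{AG}(q)}\pi_{AG})\bigl((v_q)^{\mathtt v}_{0_{AG}(q)}\bigr)=(\hat{v}_{\hat{q}})^{\mathtt v}_{0_{\widehat{AG}}(\hat{q})}.$$
Differentiating~\eqref{eq:Rreduced} at $0_{AG}(q)$ and evaluating both sides on $(v_q)^{\mathtt v}_{0_{AG}(q)}$, the left-hand side equals $(T_{0_{\widehat{AG}}(\hat{q})}\widehat{\mathcal{R}}_{\hat{q}})\bigl((\hat{v}_{\hat{q}})^{\mathtt v}_{0_{\widehat{AG}}(\hat{q})}\bigr)$, while the right-hand side, after applying the second condition of Definition~\ref{Def:newdiscretization} to $\mathcal{R}$, equals
$$(T_{\epsilon(q)}\pi_G)\Bigl(v_q+\bigl[T_q\epsilon\circ T_{\epsilon(q)}\alpha\circ T_{0_{AG}(q)}\mathcal{R}_q\bigr]\bigl((v_q)^{\mathtt v}_{0_{AG}(q)}\bigr)\Bigr).$$
Using that $T\pi_G$ restricted to $AG$ coincides with $\pi_{AG}$ (a consequence of the action being by Lie algebroid morphisms), together with the relations $\pi_G\circ\epsilon=\widehat{\epsilon}\circ\pi_Q$ and $\pi_Q\circ\alpha=\widehat{\alpha}\circ\pi_G$ from~\eqref{eq:structuralReduced} and~\eqref{eq:Rreduced} again, the expression rewrites as
$$\hat{v}_{\hat{q}}+\bigl[T_{\hat{q}}\widehat{\epsilon}\circ T_{\widehat{\epsilon}(\hat{q})}\widehat{\alpha}\circ T_{0_{\widehat{AG}}(\hat{q})}\widehat{\mathcal{R}}_{\hat{q}}\bigr]\bigl((\hat{v}_{\hat{q}})^{\mathtt v}_{0_{\widehat{AG}}(\hat{q})}\bigr),$$
which is exactly the second condition for $\widehat{\mathcal{R}}$.

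The main obstacle is bookkeeping: one must track carefully that the vertical lift and the zero section transport correctly under $\pi_{AG}$ (this is precisely where the vector-bundle nature of the $H$-action is used), and that the various tangent maps fit into the commuting squares built from~\eqref{eq:structuralReduced} and~\eqref{eq:Rreduced}. No new idea is required beyond systematically differentiating the commuting diagrams and exploiting naturality of the vertical lift under fiberwise linear maps, so the argument reduces to an organized diagram chase.
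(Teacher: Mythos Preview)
Your proposal is correct and follows essentially the same approach as the paper's proof: both verify the two conditions of Definition~\ref{Def:newdiscretization} by pushing them through the projections using $\pi_{AG}\circ 0_{AG}=0_{\widehat{AG}}\circ\pi_Q$, the defining relation~\eqref{eq:Rreduced}, and the intertwining identities~\eqref{eq:structuralReduced}, then cancelling the surjective $\pi_Q$. The only difference is presentational: the paper writes the second condition as an explicit chain of equalities via $\left.\tfrac{d}{dt}\right|_{t=0}$ computations, whereas you phrase the same steps more conceptually in terms of naturality of the vertical lift under the fiberwise linear map $\pi_{AG}$.
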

\begin{proof}
As ${\mathcal R}$ is a generalized discretization map, ${\mathcal R}\circ 0_{AG}=\epsilon$. It is also true
   that  $$\pi_{AG}\circ 0_{AG}=0_{\widehat{AG}}\circ \pi_Q, 
 \quad \pi_G\circ \epsilon=\widehat{\epsilon}\circ \pi_Q.$$
 Using Equation~\eqref{eq:Rreduced} and previous equalities, 
 $$\widehat{\mathcal R}\circ 0_{\widehat{AG}}\circ \pi_Q=\pi_G\circ {\mathcal R} \circ 0_{AG}= \pi_G\circ \epsilon=\widehat{\epsilon}\circ \pi_Q.$$
 Thus, $\widehat{R}\circ 0_{\widehat{AG}}=\widehat{\epsilon}$ and the first property in Definition~\ref{Def:newdiscretization} is satisfied.

Take $q\in Q$ and $a_q\in A_qG$ to prove the second property in Definition~\ref{Def:newdiscretization}. Using the properties in Definition~\ref{Def:newdiscretization} for ${\mathcal R}$ and the relations in Equations~\eqref{eq:Rreduced} and~\eqref{eq:structuralReduced} we compute:
 \begin{align*}
    \left(  T_{0_{\widehat{AG}}(\pi_Q(q))} \right.& \left.\widehat{\mathcal R}_{\pi_Q(q)} \right)\left( \pi_{AG}(a_q)\right)^{\rm v}_{\phi_{\widehat{AG}} (\pi_Q(q))}\\ =& 
T_{0_{\widehat{AG}}(\pi_Q(q)) } \widehat{\mathcal R}_{\pi_Q(q)} \left(\left. \dfrac{\rm d}{{\rm d}t}\right|_{t=0} t\pi_{AG}(a_q)\right)\\= & \left(T_{0_{\widehat{AG}}(\pi_Q(q))}\widehat{\mathcal R}_{\pi_Q(q)}\right) \left(\left. \dfrac{\rm d}{{\rm d}t}\right|_{t=0} \pi_{AG}(t\, a_q)\right)
     \\ =&\left(T_{0_{AG}(q)}\left(\widehat{\mathcal R}_{\pi_Q(q)}\circ \pi_{AG} \right)\right) \left(\left. \dfrac{\rm d}{{\rm d}t}\right|_{t=0} (t\, a_q)\right)\\ =& \left(T_{0_{AG}(q)}\left(\pi_G\circ {\mathcal R}_q\right)\right) \left(\left. \dfrac{\rm d}{{\rm d}t}\right|_{t=0} (t\, a_q)\right) \\=&\left(T_{\epsilon(q)}\pi_G\right) \left(a_q+\left(T_q\epsilon \circ T_{\epsilon(q)}\alpha \circ T_{0_{AG}(q)}{\mathcal R}_q\right) \left(\left. \dfrac{\rm d}{{\rm d}t}\right|_{t=0} (t\, a_q)\right) \right)\\ =& \pi_{AG}(a_q)+  \left(T_{0_{AG}(q)}\left(\pi_G \circ \epsilon \circ \alpha \circ {\mathcal R}_q\right)\right) \left(\left. \dfrac{\rm d}{{\rm d}t}\right|_{t=0} (t\, a_q)\right) \\ =& \pi_{AG}(a_q)+  \left(T_{0_{AG}(q)}\left(\widehat{\epsilon}\circ \widehat{\alpha}\circ \widehat{ \mathcal R}\circ \pi_{AG}\right)\right) \left(\left. \dfrac{\rm d}{{\rm d}t}\right|_{t=0} (t\, a_q)\right) \\ =& \pi_{AG}(a_q) \\+& \left(T_{\pi_Q(q)} \widehat{\epsilon}\circ T_{\widehat{\epsilon}(\pi_Q(q))}\widehat{\alpha}\circ T_{0_{\widehat{AG}}(\pi_Q(q))} \widehat{ \mathcal R}_{\pi_Q(q)}\right) 
    \left( \left.\dfrac{\rm d}{{\rm d}t}\right|_{t=0} t\pi_{AG}(a_q)\right)\\=&\pi_{AG}(a_q)\\ +&\left( T_{\pi_Q(q)}\widehat{\epsilon}\circ T_{\widehat{\epsilon}(\pi_Q(q))}\widehat{\alpha} \circ T_{0_{\widehat{AG}}(\pi_Q(q))}\widehat{\mathcal R}_{\pi_Q(q)} \right) \left( \pi_{AG}(a_q)\right)^{\rm v}_{0_{\widehat{AG}}(\pi_Q(q))} \, .
\end{align*} 
 The second property has been proved and the result follows.
\end{proof}
An application of Theorem~\ref{thm:discreteQuotient} is developed here for the pair groupoid case. If $\phi$ is a free and proper action of $H$ on $Q$, then $\widehat{Q}=Q/H$ is a quotient manifold. 
We have that the diagonal action $(\Phi=\phi\times \phi,\phi)$ of $H$ on $Q \times Q$ is a free and proper action of $H$ on the pair groupoid $Q\times Q\rightrightarrows Q$ by Lie groupoid isomorphisms. 

The corresponding action of $H$ on the Lie algebroid $\tau_Q\colon TQ \rightarrow Q$ is just the tangent lift of $\phi$ to $TQ$. We have the following commutative diagrams:

\begin{equation*}
\xymatrix{
H\times (Q\times Q)  \ar[d]<-4pt>_{\tilde{\rm pr}_1}  \ar[d]<4pt>^{\tilde{\rm pr}_2}\ar[rr]^{(\phi,\phi)} &&  Q\times Q \ar[d]<-4pt>_{{\rm pr}_1}  \ar[d]<4pt>^{{\rm pr}_2}&&   H\times (TQ)  \ar[d]_{\tilde{\tau}_Q} \ar[rr]^{T\phi} && TQ    \ar[d]_{\tau_Q}   \\  H\times Q  \ar[rr]^{\phi} &&  Q && H\times Q  \ar[rr]^{\phi}  &&   Q}
\end{equation*} 

The quotient objects 
\begin{eqnarray*}\widehat{Q\times Q}&=&(Q\times Q)/H \rightrightarrows \widehat{Q}=Q/H\, ,\\ \widehat{\tau}_Q\colon \widehat{A(Q\times Q)}&=&\widehat{TQ}=TQ/H \rightarrow \widehat{Q}=Q/H\end{eqnarray*}
are the Atiyah Lie groupoid and the corresponding Atiyah Lie algebroid, respectively (see Appendix~\ref{App:Atiyah}). 

A straightforward corollary of Theorem~\ref{thm:discreteQuotient} is the following result that connects with the contributions in~\cite{24IFAC}.

\begin{corollary}\label{Corol:reductionPairGroupoiod} If ${\mathcal R}\colon TQ \rightarrow Q\times Q$ is a $H$-equivariant discretization map on the pair groupoid, then $\widehat{{\mathcal R}}\colon \widehat{TQ}=TQ/H \rightarrow \widehat{Q\times Q}=(Q\times Q)/H$ is a generalized discretization map on the Atiyah groupoid $\widehat{Q\times Q}=(Q\times Q)/H.$
\end{corollary}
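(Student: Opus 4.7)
The plan is to obtain this corollary as a direct specialization of Theorem~\ref{thm:discreteQuotient} to the Lie groupoid $G = Q\times Q \rightrightarrows Q$, whose associated Lie algebroid is $TQ$ via the isomorphism $v_q \mapsto (0_q, v_q)$ recalled in Section~\ref{Sec:General}. First, I would verify that the hypotheses of Theorem~\ref{thm:discreteQuotient} are met. The action $(\Phi,\phi)=(\phi\times\phi,\phi)$ of $H$ on the pair groupoid is by Lie groupoid isomorphisms: indeed, the source, target, identity and inversion maps of $Q\times Q$ are all manifestly equivariant under the diagonal action, and freeness and properness on $Q\times Q$ follow from the same properties for $\phi$ on $Q$. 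This is precisely the setup displayed in the two commutative diagrams just before the corollary.

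Next, I would identify the induced $H$-action on the Lie algebroid $A(Q\times Q)\cong TQ$ with the tangent lift action $T\phi$. Differentiating $\phi_{\mathbf{h}}\times\phi_{\mathbf{h}}$ at an identity $\epsilon(q) = (q,q)$ and restricting to the $\alpha$-vertical subspace $A_q(Q\times Q)\cong T_qQ$ yields exactly $T_q\phi_{\mathbf{h}}$, so the induced action on the algebroid is the standard tangent lift. Consequently, the $H$-equivariance of $\mathcal{R}$ assumed in the corollary coincides with the equivariance hypothesis of Theorem~\ref{thm:discreteQuotient}, and the theorem delivers a generalized discretization map $\widehat{\mathcal{R}}\colon TQ/H \to (Q\times Q)/H$ on the quotient Lie groupoid $(Q\times Q)/H \rightrightarrows Q/H$.

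The final step is purely a matter of nomenclature: identify the quotient Lie groupoid $(Q\times Q)/H$ with the Atiyah groupoid and $TQ/H$ with the associated Atiyah algebroid, as summarized in the paragraph preceding the statement and in Appendix~\ref{App:Atiyah}. There is no substantive obstacle here; the entire content is packaged in Theorem~\ref{thm:discreteQuotient}, and the only work is to check that the diagonal action on the pair groupoid fits the general framework and to rename the resulting quotient objects.
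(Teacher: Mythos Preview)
Your proposal is correct and matches the paper's approach exactly: the paper presents this result as an immediate specialization of Theorem~\ref{thm:discreteQuotient} to the pair groupoid, having already set up the diagonal action, the tangent-lift action on $TQ$, and the identification of the quotients with the Atiyah groupoid and algebroid in the discussion immediately preceding the statement. No additional argument is given or needed beyond what you outline.
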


\section{Tangent lift of generalized discretization maps}\label{Sec:Lift}

Similar to the lifts of discretization maps developed in~\cite{21MBLDMdD}, we describe here the tangent lift of a generalized discretization map on the Lie groupoid to the tangent Lie groupoid.

In the definition in~\cite{21MBLDMdD} the canonical involution for the double tangent bundle is involved. In the Lie algebroid setting, it does not exist an appropriate version of the canonical involution, unless the prolongation of the Lie algebroid is considered, see~\cite{LeMaMa}. In this paper, we will not use the latter approach, but we propose another definition of the tangent lift of a discretization map which is isomorphic to the definition in~\cite{21MBLDMdD} because of Theorem~\ref{th:GtoG'}. As a consequence, the tangent lift of a generalized discretization map can be defined. 

Let us introduce such a new definition. Let $R_d\colon TQ \rightarrow Q\times Q$ be a discretization map, $\kappa_Q\colon TTQ \rightarrow TTQ$ and $TR_d\colon TTQ \rightarrow T(Q\times Q)$ be the canonical involution and the tangent lift of $R_d$, respectively, see~\cite{Godbillon} for more details.

There exists a canonical vector bundle isomorphism $$(T{\rm pr}_1,T{\rm pr}_2)\colon T(Q\times Q) \rightarrow TQ\times TQ$$
given by 
$$(T{\rm pr}_1,T{\rm pr}_2)(X_{(q_0,q_1)})=\left( T{\rm pr}_1(X_{(q_0,q_1)}),  T{\rm pr}_2(X_{(q_0,q_1)})\right), $$
for $X_{(q_0,q_1)}\in T_{(q_0,q_1)}(Q\times Q)$, where ${\rm pr}_i\colon Q\times Q\rightarrow Q$, $i=1,2$, are the canonical projections. The inverse morphism is denoted by $\Phi\colon TQ\times TQ \rightarrow T(Q\times Q)$. It was proved in~\cite{21MBLDMdD} that $R_d^T=(T{\rm pr}_1,T{\rm pr}_2)\circ TR_d\circ \kappa\colon TTQ \rightarrow TQ\times TQ$ is a discretization map. In other words, $R_d^T$ is a generalized discretization map for the pair groupoid $TQ\times TQ \rightrightarrows TQ$.

We will show now that the tangent lift $TR_d\colon TTQ \rightarrow T(Q\times Q)$
is a generalized discretization map for the tangent Lie groupoid $T(Q\times Q) \rightrightarrows TQ$ of the pair groupoid $Q\times Q \rightrightarrows Q$. The structural maps of the tangent Lie groupoid are the tangent lifts of the structural maps of the pair groupoid. Moreover, if $\tau_Q\colon TQ\rightarrow Q$ is the canonical projection, the vector bundle $T\tau_Q\colon TTQ\rightarrow TQ$ is just the Lie algebroid of the tangent Lie groupoid $T(Q\times Q)\rightrightarrows TQ$. In fact, the canonical involution $\kappa_Q$ is a Lie algebroid isomorphism between that Lie algebroid and the standard Lie algebroid given by the canonical projection $\tau_{TQ}\colon TTQ \rightarrow TQ$. In other words, the following diagram is commutative:
\begin{equation*}
\xymatrix{
	 TTQ\ar[rr]^{\kappa_Q}  \ar[dr]_{\tau_{TQ}}   && TTQ \ar[dl]^{T\tau_Q}  \\ 
	 & TQ & }
\end{equation*} 
Moreover, the following result about the inverse morphism $\Phi$ can be proved, see~\cite{Mackenzie}.

\begin{proposition}\label{Prop:PhiIso} The map $\Phi=(T{\rm pr}_1,T{\rm pr}_2)^{-1}\colon TQ\times TQ \rightarrow T(Q\times Q)$ is a Lie groupoid isomorphism of the pair groupoid $TQ\times TQ \rightrightarrows TQ$  and the tangent Lie groupoid $T(Q\times Q) \rightrightarrows TQ$ associated with the pair groupoid $Q\times Q \rightrightarrows Q$. Additionally, the Lie algebroid isomorphism 
\begin{equation*}
\xymatrix{
	 TTQ\ar[rr]^{A\Phi}  \ar[dr]_{\tau_{TQ}}   && TTQ \ar[dl]^{T\tau_Q}  \\ 
	 & TQ & }
\end{equation*} 
associated with $\Phi$ is the canonical involution, that is, $A\Phi=\kappa_Q$.
\end{proposition}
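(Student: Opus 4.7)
The plan is to split the proposition into its two claims and treat each essentially by unfolding definitions in local coordinates. Fix coordinates $(q^i)$ on $Q$, inducing $(q^i,\dot q^i)$ on $TQ$, $(q_0^i,q_1^i,\dot q_0^i,\dot q_1^i)$ on $T(Q\times Q)$, $(q_0^i,\dot q_0^i,q_1^i,\dot q_1^i)$ on $TQ\times TQ$, and $(q^i,\dot q^i,\delta q^i,\delta \dot q^i)$ on $TTQ$. In these coordinates $\Phi$ is the rearrangement $((q_0,\dot q_0),(q_1,\dot q_1))\mapsto(q_0,q_1,\dot q_0,\dot q_1)$, hence an obvious diffeomorphism with inverse $(T{\rm pr}_1,T{\rm pr}_2)$. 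Conceptually, $\Phi$ realizes the canonical identification $TQ\times TQ=T(Q\times Q)$ that comes from the fact that $T$ commutes with products.

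To show that $\Phi$ is a Lie groupoid morphism over the identity of $TQ$, I check intertwinement of the structural maps. Source and target are automatic, since $T{\rm pr}_i\circ\Phi={\rm pr}_i^{TQ}$. The identity sections match because $\Phi(v_q,v_q)=T\Delta(v_q)$. For a composable pair $((v_{q_0},v_{q_1}),(v_{q_1},v_{q_2}))$, the pair-groupoid product $(v_{q_0},v_{q_2})$ on $TQ\times TQ$ is sent by $\Phi$ to the same element as the tangent multiplication $Tm$ applied to $(\Phi(v_{q_0},v_{q_1}),\Phi(v_{q_1},v_{q_2}))$; this is immediate from the coordinate form of $Tm$, which is the tangent lift of $m((q_0,q_1),(q_1,q_2))=(q_0,q_2)$. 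Inversion is verified in the same way. Hence $\Phi$ is a Lie groupoid isomorphism.

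For the algebroid statement I unfold both identifications with $TTQ$ separately. The Lie algebroid of the pair groupoid $TQ\times TQ\rightrightarrows TQ$ is $\tau_{TQ}\colon TTQ\to TQ$: the fiber at $v_q=(q,\dot q)$ is the $\alpha$-vertical subspace at $(v_q,v_q)$, in coordinates $\{(0,0,\delta q,\delta \dot q)\}$, identified with $T_{v_q}TQ$. The Lie algebroid of $T(Q\times Q)\rightrightarrows TQ$ is $T\tau_Q\colon TTQ\to TQ$ via the standard isomorphism $A(TG)\cong TAG$: its fiber at $v_q$ is the $T\alpha$-vertical subspace at $T\Delta(v_q)=(q,q,\dot q,\dot q)$, in coordinates $\{(0,\delta q_1,0,\delta \dot q_1)\}$, and $A(TG)\cong TAG$ sends it into $T\tau_Q^{-1}(v_q)=\{(q,\dot q',\dot q,\delta \dot q')\}\subset TTQ$ by assigning the free parameters $(\dot q',\delta \dot q')=(\delta q_1,\delta \dot q_1)$.

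With these identifications in hand, the computation of $A\Phi$ is direct: $T\Phi$ carries $(0,0,\delta q,\delta \dot q)$ at $(v_q,v_q)$ to $(0,\delta q,0,\delta \dot q)$ at $T\Delta(v_q)$. Reading off the result in $TTQ$, the input corresponds to $(q,\dot q,\delta q,\delta \dot q)\in TTQ$ (via $\tau_{TQ}$) and the output corresponds to $(q,\delta q,\dot q,\delta \dot q)\in TTQ$ (via $T\tau_Q$); this is exactly the coordinate formula $\kappa_Q(q,\dot q,\delta q,\delta \dot q)=(q,\delta q,\dot q,\delta \dot q)$, so $A\Phi=\kappa_Q$. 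The main obstacle is not a deep calculation but the bookkeeping: keeping the two vector bundle structures $\tau_{TQ}$ and $T\tau_Q$ on $TTQ$ cleanly apart, and correctly realizing the standard identification $A(TG)\cong TAG$ as the matching of the free $T\alpha$-vertical coordinates with the free $T\tau_Q$-fiber coordinates; once that is set, the conclusion is a direct coordinate check.
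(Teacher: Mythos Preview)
The paper does not actually prove this proposition; it only states it with a reference to Mackenzie's monograph. Your argument is a correct, self-contained coordinate verification that fills this gap: the groupoid-isomorphism part is the routine check that $\Phi$ intertwines source, target, identity, multiplication and inversion, and the algebroid part correctly tracks the two identifications of the respective Lie algebroids with $TTQ$ and reads off $A\Phi(q,\dot q,\delta q,\delta\dot q)=(q,\delta q,\dot q,\delta\dot q)=\kappa_Q$. The one nontrivial step---realizing the identification $A(T(Q\times Q))\cong T(A(Q\times Q))\cong TTQ$ so that the $(T\alpha)$-vertical vector $(0,a,0,b)$ at $T\Delta(v_q)$ corresponds to $(q,a,\dot q,b)\in T\tau_Q^{-1}(v_q)$---is exactly the standard Mackenzie identification (essentially the restriction of $\kappa_{Q\times Q}$), and you apply it correctly. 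Since the paper offers no proof of its own, there is nothing further to compare.
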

As a consequence, the tangent lift of a discretization map can be characterized as follows. 
\begin{corollary}\label{Corol:Tangent}
    If $R_d\colon TQ \rightarrow Q\times Q$ is a discretization map, then the tangent lift of $R_d$, $TR_d\colon TTQ \rightarrow T(Q\times Q)$, is a generalized discretization map on the tangent Lie groupoid $T(Q\times Q) \rightrightarrows TQ$, associated with the pair groupoid $Q\times Q\rightrightarrows Q$.

    Moreover, $TR_d$ is isomorphic to the discretization map $R_d^T\colon TTQ\rightarrow TQ\times TQ$ via the canonical involution $\kappa_Q\colon TTQ \rightarrow TTQ$ and the vector bundle isomorphism $(T{\rm pr}_1,T{\rm pr}_2)\colon T(Q\times Q)\rightarrow TQ \times TQ$. In particular, the following diagram is commutative:
    \begin{equation*}
\xymatrix{
	 TTQ\ar[rr]^{TR_d}  \ar[d]_{\kappa_Q}   && T(Q\times Q) \ar[d]^{(T{\rm pr}_1,T{\rm pr}_2)}  \\ 
	TTQ \ar[rr]^{R_d^T}  &&  TQ\times TQ }
\end{equation*} 
\end{corollary}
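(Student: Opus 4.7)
The plan is to reduce the corollary entirely to Theorem~\ref{th:GtoG'} combined with Proposition~\ref{Prop:PhiIso}, so that no new infinitesimal computation is needed; the result on $R_d^T$ established in~\cite{21MBLDMdD} can simply be transported across the Lie groupoid isomorphism $\Phi$.

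First, I would note that the commutativity of the displayed square is automatic from how $R_d^T$ was defined in the excerpt, namely $R_d^T=(T\mathrm{pr}_1,T\mathrm{pr}_2)\circ TR_d\circ \kappa_Q$. Rewriting this gives
\[
TR_d=(T\mathrm{pr}_1,T\mathrm{pr}_2)^{-1}\circ R_d^T\circ \kappa_Q^{-1}=\Phi\circ R_d^T\circ \kappa_Q ,
\]
using that $\kappa_Q$ is an involution. This is exactly the claimed commutative diagram.

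Next, I would observe that we are in the precise situation where Theorem~\ref{th:GtoG'} applies. On the source side we have the pair groupoid $TQ\times TQ\rightrightarrows TQ$, whose Lie algebroid is $\tau_{TQ}\colon TTQ\to TQ$, together with the discretization map $R_d^T\colon TTQ\to TQ\times TQ$, which is already known to be a (generalized) discretization map from~\cite{21MBLDMdD}. On the target side we have the tangent Lie groupoid $T(Q\times Q)\rightrightarrows TQ$, whose Lie algebroid is $T\tau_Q\colon TTQ\to TQ$. By Proposition~\ref{Prop:PhiIso}, the pair $(\Phi,\mathrm{id}_{TQ})$ is a Lie groupoid isomorphism between these two groupoids, and the associated Lie algebroid isomorphism is exactly $A\Phi=\kappa_Q$.

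Applying Theorem~\ref{th:GtoG'} to $R_d^T$ with the isomorphism $(\Phi,\mathrm{id}_{TQ})$ immediately yields that
\[
\Phi\circ R_d^T\circ (A\Phi)^{-1}=\Phi\circ R_d^T\circ \kappa_Q
\]
is a generalized discretization map on the tangent Lie groupoid $T(Q\times Q)\rightrightarrows TQ$. By the first step this map is precisely $TR_d$, establishing the first assertion; the isomorphism claim is then the content of Theorem~\ref{th:GtoG'} itself, with the explicit intertwiners being $\kappa_Q$ (on the algebroid side) and $(T\mathrm{pr}_1,T\mathrm{pr}_2)$ (on the groupoid side). The only point that really carries weight is Proposition~\ref{Prop:PhiIso}, i.e.\ identifying $A\Phi$ with the canonical involution $\kappa_Q$; once this is in hand the corollary is a short bookkeeping exercise and there is no serious obstacle to overcome.
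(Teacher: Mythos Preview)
Your proposal is correct and follows essentially the same approach as the paper: both invoke Theorem~\ref{th:GtoG'} together with Proposition~\ref{Prop:PhiIso} (identifying $A\Phi$ with $\kappa_Q$) to transport the discretization-map property of $R_d^T$ across the groupoid isomorphism $\Phi$, using that $\kappa_Q$ is an involution. The only cosmetic difference is the order of exposition---you first read off the commutative square from the definition of $R_d^T$ and then apply the theorem, whereas the paper applies the theorem to produce $R_d'=\Phi\circ R_d^T\circ A\Phi^{-1}$ and then simplifies to $TR_d$---but the content is identical.
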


\begin{proof}
    From Theorem~\ref{th:GtoG'}, Proposition~\ref{Prop:PhiIso} and the fact that $R_d^T\colon TTQ \rightarrow TQ\times TQ$ is a discretization map, it follows that $R_d'=\Phi\circ R_d^T\circ A\Phi^{-1}\colon TTQ \rightarrow T(Q\times Q)$ is a generalized discretization map on the tangent Lie groupoid $T(Q\times Q)\rightrightarrows TQ$ associated with the pair groupoid $Q\times Q \rightrightarrows Q$.

    Moreover, Proposition~\ref{Prop:PhiIso} leads to $A\Phi^{-1}=\kappa_Q^{-1}=\kappa_Q$. Thus, $$R'_d=\Phi\circ(T{\rm pr}_1,T{\rm pr}_2) \circ TR_d \circ \kappa_Q\circ \kappa_Q=TR_d$$
    and the proof is concluded.
\end{proof}

Corollary~\ref{Corol:Tangent} is the key point to introduce the tangent lift of a generalized discretization on any Lie groupoid.

\subsection{General situation}

Let $G\stackrel[\beta]{\alpha}{\rightrightarrows}  Q$ be an arbitrary Lie groupoid, not necessarily the pair groupoid $Q\times Q{\rightrightarrows} Q$, with Lie algebroid $\tau\colon AG \rightarrow Q$. The tangent Lie groupoid $TG\stackrel[T\beta]{T\alpha}{\rightrightarrows} TQ$ has $T\epsilon\colon TQ \rightarrow TG$ as the identity section. The associated tangent Lie algebroid is given by $T\tau\colon T(AG)\rightarrow TQ$ that corresponds with $A(TG)$, see~\cite{Mackenzie}.

\begin{theorem}\label{Thm:tangentLift} If ${\mathcal R}\colon AG \rightarrow G$ is a generalized discretization map on $G$, then $T{\mathcal R}\colon T(AG)\rightarrow TG$ is a generalized discretization map on the tangent Lie groupoid $TG\stackrel[T\beta]{T\alpha}{\rightrightarrows} TQ$.
\end{theorem}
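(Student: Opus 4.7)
The plan is to verify the two defining conditions of Definition~\ref{Def:newdiscretization} for $T\mathcal{R}\colon T(AG)\to TG$, viewed as a candidate generalized discretization map on the tangent Lie groupoid $TG\rightrightarrows TQ$, whose Lie algebroid is $T\tau\colon T(AG)\to TQ$ with source map $T\alpha$, identity section $T\epsilon$, and, by the standard double-vector-bundle fact, zero section of $T\tau$ given by $T(0_{AG})\colon TQ\to T(AG)$. For the first condition, functoriality of the tangent functor together with the first hypothesis on $\mathcal{R}$ yields
$$T\mathcal{R}\circ 0_{T(AG)} \;=\; T\mathcal{R}\circ T(0_{AG}) \;=\; T(\mathcal{R}\circ 0_{AG}) \;=\; T\epsilon,$$
which is the required equality.

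For the second condition the cleanest route is the local description of Section~\ref{Sec:Local}. In adapted coordinates $(q^i,u^\gamma)$ on $G$, inducing $(q^i,y^\gamma)$ on $AG$, $(q^i,y^\gamma,\dot q^i,\dot y^\gamma)$ on $T(AG)$, and $(q^i,u^\gamma,\dot q^i,\dot u^\gamma)$ on $TG$, the tangent lift reads $T\mathcal{R}(q,y,\dot q,\dot y)=(\mathcal{R}^i,\mathcal{R}^\gamma,\partial_{q^j}\mathcal{R}^i\,\dot q^j+\partial_{y^{\gamma'}}\mathcal{R}^i\,\dot y^{\gamma'},\partial_{q^j}\mathcal{R}^\gamma\,\dot q^j+\partial_{y^{\gamma'}}\mathcal{R}^\gamma\,\dot y^{\gamma'})$. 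Evaluating the $(u^\gamma,\dot u^\gamma)$-block of the fiber Jacobian at the zero section $(q,0,\dot q,0)$ yields the $2r\times 2r$ matrix
$$\begin{pmatrix} \partial\mathcal{R}^\gamma/\partial y^{\gamma'}(q,0) & 0 \\ (\partial^2\mathcal{R}^\gamma/\partial q^j\,\partial y^{\gamma'})(q,0)\,\dot q^j & \partial\mathcal{R}^\gamma/\partial y^{\gamma'}(q,0) \end{pmatrix} \;=\; \begin{pmatrix} \id_r & 0 \\ 0 & \id_r \end{pmatrix},$$
where the diagonal blocks are $\id_r$ by the second hypothesis on $\mathcal{R}$, and the off-diagonal block vanishes because the pointwise identity $\partial\mathcal{R}^\gamma/\partial y^{\gamma'}(q,0)=\delta^\gamma_{\gamma'}$ holds for every $q$, so its $q$-derivative is zero. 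This is exactly the local form of condition 2 of Definition~\ref{Def:newdiscretization} for $T\mathcal{R}$ on the tangent Lie groupoid.

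The main obstacle is not the computation itself but rather the bookkeeping of identifications for iterated tangent bundles: one must use that $A(TG)$ is indeed $T\tau\colon T(AG)\to TQ$ with its induced double-vector-bundle structure, that the structural maps of $TG\rightrightarrows TQ$ are the $T$-images of those of $G\rightrightarrows Q$, and that the zero section of $T\tau$ is $T(0_{AG})$; these are standard (see \cite{Mackenzie}). Once they are accepted, the verification reduces to the pointwise-identity-implies-zero-derivative observation above. A coordinate-free variant would apply $T$ directly to the reformulation $(\id - T_q\epsilon\circ T_{\epsilon(q)}\alpha)\circ T_{0_{AG}(q)}\mathcal{R}_q = \id_{A_qG}$ of condition 2 for $\mathcal{R}$ and match structures across $T(T(AG))$ and $T(TG)$, at the cost of navigating the canonical flip $\kappa_{AG}$ and the corresponding decomposition of the vertical lift in $T\tau\colon T(AG)\to TQ$.
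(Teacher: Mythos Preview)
Your proof is correct and follows essentially the same route as the paper's own argument: functoriality of the tangent functor for condition~1, and the local-coordinate verification of the $2r\times 2r$ fiber Jacobian for condition~2, with the off-diagonal block vanishing because the identity $\partial\mathcal{R}^\gamma/\partial y^{\gamma'}(q,0)=\delta^\gamma_{\gamma'}$ holds for all $q$. The paper makes the same computation block by block and relies on the same identifications from \cite{Mackenzie} that you flag.
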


\begin{proof}
Since ${\mathcal R}$ is a generalized discretization map, the diagram $$ \xymatrix{AG \ar[rr]^{\mathcal R} && G \\ & Q \ar[ul]^{0_{AG}} \ar[ur]_{\epsilon}}$$ is commutative. Hence, 
$$ \xymatrix{T(AG) \ar[rr]^{T{\mathcal R}} && TG \\ & TQ \ar[ul]^{T0_{AG}} \ar[ur]_{T\epsilon}}$$ 
is also commutative. Note that $T0_{AG}\colon TQ \rightarrow T(AG)$ is the zero section of the tangent Lie algebroid $T(AG)\rightarrow TQ$.

The rest of the proof will be completed using the local coordinates as in Section~\ref{Sec:Local}. Remember that $(q^i,u^\gamma)$ are local coordinates on $G$ such that $\alpha(q,u)=q$ and $\epsilon(q)=(q,0)$, and $(q^i,y^\gamma)$ are induced local coordinates on $AG$. The local expression of ${\mathcal R}$ is
$${\mathcal R}(q,y)=({\mathcal R}^i(q,y),{\mathcal R}^\gamma(q,y))$$
and satisfies by Definition~\ref{Def:newdiscretization}: 
\begin{enumerate}
    \item ${\mathcal R}^i(q,0)=q^i$ and ${\mathcal R}^\gamma(q,0)=0^\gamma$ for all $i$, $\gamma$.
\item $\left( \left.\dfrac{\partial {\mathcal R}^\gamma}{\partial y^{\gamma'}} \right|_{(q,0)}\right)={\rm Id}$.
\end{enumerate}
The local coordinates for the tangent Lie groupoid $TG$ and the tangent Lie algebroid $T(AG)$ are respectively $(q^i,u^\gamma;\dot{q}^i,\dot{u}^\gamma)$ and $(q^i,y^\gamma;\dot{q}^i,\dot{y}^\gamma)$. The tangent lift of ${\mathcal R}$ has the following local expression:
\begin{align}\label{eq:TRlocal}
T{\mathcal R}(q^i,y^\gamma;\dot{q}^i,\dot{y}^\gamma)=& \left( {\mathcal R}^i(q,y), {\mathcal R}^\gamma(q,y), \dot{q}^j \, \left.\dfrac{\partial {\mathcal R}^i}{\partial q^j}\right|_{(q,y)}+ \dot{y}^\gamma \, \left.\dfrac{\partial {\mathcal R}^i}{\partial y^\gamma}\right|_{(q,y)}, \right. \nonumber \\ &  \left.\dot{q}^j \, \left.\dfrac{\partial {\mathcal R}^\gamma}{\partial q^j}\right|_{(q,y)}+ \dot{y}^{\gamma'} \, \left.\dfrac{\partial {\mathcal R}^\gamma}{\partial y^{\gamma'}}\right|_{(q,y)}  \right)=\left({\mathcal R}^i, {\mathcal R}^\gamma, (T{\mathcal R})^i, (T{\mathcal R})^\gamma\right) \, .
\end{align}
Note that the zero section of the tangent Lie algebroid $T\tau\colon T(AG)\rightarrow TQ$,  $T0_{AG}\colon TQ\rightarrow T(AG)$,  in local coordinates $(q,\dot{q})$ for $TQ$ is:
$$T_q0_{AG}(q,\dot{q})=(q,0;\dot{q},0).$$
Moreover, 
\begin{eqnarray*}
    T\alpha (q,y;\dot{q},\dot{y})&=&(q,\dot{q})\, ,\\
    T\epsilon (q,\dot{q})&=&(q,0;\dot{q},0)\, .
\end{eqnarray*}
The evaluation of Equation~\eqref{eq:TRlocal} at the point $(q,0;\dot{q},0)$ proves that $T{\mathcal R}(q,0;\dot{q},0)=(q,0;\dot{q},0)$ because 
\begin{eqnarray*}
    {\mathcal R}^i(q,0)&=&q^i,\\
     {\mathcal R}^\gamma(q,0)&=&0^\gamma,\\ 
      T{\mathcal R}^i(q,0;\dot{q},0)&=&\dot{q}^j \left.\dfrac{\partial {\mathcal R}^i}{\partial q^j}\right|_{(q,0)}=\dot{q}^i,\\
      T{\mathcal R}^\gamma(q,0;\dot{q},0)&=&\dot{q}^j \left.\dfrac{\partial {\mathcal R}^\gamma}{\partial q^j}\right|_{(q,0)}=0^\gamma\, .\\
\end{eqnarray*}

On the other hand, we must also prove that $\left( \left.\dfrac{\partial ({\mathcal R}^\gamma, T{\mathcal R}^{\bar{\gamma}})}{\partial (y^{\gamma'},\dot{y}^{\bar{\gamma}'})} \right|_{(q,0;\dot{q},0)}\right)={\rm Id}$. It follows because
\begin{eqnarray*}
\left.\dfrac{\partial {\mathcal R}^\gamma}{\partial y^{\gamma'}}\right|_{(q,0;\dot{q},0)}&=& \left.\dfrac{\partial {\mathcal R}^\gamma}{\partial y^{\gamma'}}\right|_{(q,0)}=\delta_{\gamma\gamma'}\, ,\\
\left.\dfrac{\partial {\mathcal R}^\gamma}{\partial \dot{y}^{\gamma'}}\right|_{(q,0;\dot{q},0)}&=& \left.\dfrac{\partial {\mathcal R}^\gamma}{\partial \dot{y}^{\gamma'}}\right|_{(q,0)}\equiv 0\, ,\\
\left.\dfrac{\partial (T{\mathcal R})^{\bar{\gamma}}}{\partial y^{\gamma'}}\right|_{(q,0;\dot{q},0)}&=& \dot{q}^j \stackrel{0}{\overbrace{\left.\dfrac{\partial^2 {\mathcal R}^{\bar{\gamma}}}{\partial q^j\partial y^{\gamma'}}\right|_{(q,0)}}}+ \stackrel{0}{\overbrace{\dot{y}^{\gamma''}}} \, \left.\dfrac{\partial^2{\mathcal R}^{\bar{\gamma}}}{\partial y^{\gamma''}\partial y^{\gamma'}}\right|_{(q,0)}\equiv 0\, ,\\
\left.\dfrac{\partial (T{\mathcal R})^{\bar{\gamma}}}{\partial \dot{y}^{\gamma'}}\right|_{(q,0;\dot{q},0)}&=& \left. \dfrac{\partial {\mathcal R}^{\bar{\gamma}}}{\partial  y^{\gamma'}}\right|_{(q,0)}=\delta_{\bar{\gamma} \gamma'}\,.
\end{eqnarray*}
\end{proof}

\begin{example}\label{Ex:TangentLieGroup} Going back to Example~\ref{Ex:LieAlgebra} where $G$ is a Lie group, the tangent lift of the generalized discretization map ${\mathcal R}\colon AG=\mathfrak{g}\rightarrow G$ makes the following diagram commutative:
$$ \xymatrix{T(AG)\simeq T(\mathfrak{g}) \ar[rr]^{T{\mathcal R}} && TG\\ & 0_{\rm e} \ar[ul]^{T0_{0_{\rm e}}} \ar[ur]_{T\epsilon}}$$ 
Note that $T\epsilon(0_{\rm e})=(e, 0)$ and $T{\mathcal R}(T0_{0_{\rm e}} (0_{\rm e}))=T{\mathcal R}(0_{\rm e},0)=({\mathcal R}(0_{\rm e}), 0)=(e,0)$.

As $G$ is a Lie group, the tangent bundle $TG$ is also a Lie group isomorphic to the semidirect product $G \ltimes \mathfrak{g}$ associated to the adjoint action of $G$ on $\mathfrak{g}$.

An example of discretization map on a Lie group is the exponential map, ${\mathcal R}={\rm exp}$ that leads to the following tangent lift of the generalized discretization map:
$$T{\mathcal R}(\xi,\eta)=\left({\rm exp}(\xi), T_\xi {\rm exp} (\eta) \right).$$

\end{example}

Finally, we mention that potential applications of Theorem~\ref{Thm:tangentLift} to obtain geometric integrators for symmetric second order differential equations on Lie groups will be described in Section~\ref{Sec:conclusion}.

\section{Cotangent lift of generalized dis\-cre\-ti\-za\-tion maps}\label{Sec:CoLift}

In~\cite{21MBLDMdD}, the cotangent lift of a  discretization map was introduced by using the canonical isomorphism between the symplectic manifolds $(T^*TQ,\omega_{TQ})$ and $(TT^*Q,{\rm d}_T\omega_Q)$, see~\cite{TuHamilton}. In the Lie algebroid setting, there is no an appropriate version of such an isomorphism, unless the prolongations of the Lie algebroid are considered~\cite{LeMaMa}. As in Section~\ref{Sec:Lift}, prolongations will not be used to define the cotangent lift of generalized discretization maps. An alternative definition of the cotangent lift of a discretization map is introduced and it is isomorphic to the definition in~\cite{21MBLDMdD} because of Theorem~\ref{th:GtoG'}. As a consequence, the cotangent lift of a generalized discretization map can be defined.  

Let us provide the new and equivalent definition of the cotangent lift of a discretization map. Let $R_d\colon TQ \rightarrow Q\times Q$ be a discretization map, $\alpha_Q\colon T^*TQ\rightarrow TT^*Q$ be the Tulczyjew isomorphism~\cite{TuHamilton}, $T^*R_d\colon T^*TQ\rightarrow T^*(Q\times Q)$ be the cotangent lift of $R_d$ and $\Phi^*\colon T^*(Q\times Q)\rightarrow T^*Q\times T^*Q$ be the vector bundle isomorphism given by
\begin{equation}\label{eq:PhiCotangent}
\Phi^*((\gamma,\delta)_{q_0,q_1})=(-\gamma_{q_0},\delta_{q_1}) \quad \mbox{for } \gamma_{q_0}\in T_{q_0}^*Q \; \mbox{and} \; \delta_{q_1}\in T_{q_1}^*Q.
\end{equation}
In~\cite{21MBLDMdD} it was proved that 
$$R^{T^*}=\Phi^*\circ T^*R_d\circ \alpha_Q^{-1}\colon TT^*Q\rightarrow T^*Q\times T^*Q $$
is a discretization map on the the pair groupoid $T^*Q\times T^*Q\rightrightarrows T^*Q$.

In order to prove below that the composition
$$T^*R_d \circ {\mathcal I}_{TQ}\colon T^*T^*Q\rightarrow T^*(Q\times Q)$$
is a generalized discretization map on the cotangent Lie groupoid $T^*(Q\times Q)\rightrightarrows T^*Q$, associated with the pair groupoid $Q\times Q\rightrightarrows Q$, it is necessary to use
the canonical antisymplectomorphism ${\mathcal I}_{TQ}\colon T^*T^*Q\rightarrow T^*TQ$  between the symplectic manifolds $(T^*T^*Q,\omega_{T^*Q})$ and $(T^*TQ,\omega_{TQ})$ given by 
$${\mathcal I}_{TQ}=\alpha_Q^{-1}\circ \Lambda_Q^\sharp,$$
where $\Lambda_Q^\sharp\colon T^*T^*Q\rightarrow TT^*Q$ is the vector bundle isomorphism associated with the canonical Poisson 2-vector $\Lambda_Q$ on $T^*Q$.

Moreover, the structural maps $\tilde{\alpha}$, $\tilde{\beta}$, $\tilde{\epsilon}$, $\tilde{m}$ and $\tilde{i}$ of the cotangent Lie groupoid $T^*(Q\times Q)\rightrightarrows T^*Q$ are defined as follows
   \begin{equation}\label{eq_structmapsCotang}
   \begin{array}{ll}
    \tilde{\alpha}((\gamma,\delta)_{q_0,q_1})=-\gamma_{q_0}, &  \tilde{\beta}((\gamma,\delta)_{q_0,q_1})=\delta_{q_1},\\
    \tilde{\epsilon} (\gamma_q)=(-\gamma_q,\gamma_q),  & \tilde{i}((\gamma,\delta)_{q_0,q_1})=(-\delta_{q_1},\gamma_{q_0}), \end{array} 
        \end{equation}\begin{equation*}
    \tilde{m}\left((\gamma,\delta)_{q_0,q_1},(-\delta,\mu)_{q_1,q_2} \right)=(\gamma_{q_0},\mu_{q_2}).      
    \end{equation*}
The vector bundle $\pi_{T^*Q}\colon T^*T^*Q\rightarrow T^*Q$ is the Lie algebroid of the cotangent Lie groupoid $T^*(Q\times Q)\rightrightarrows T^*Q$. In fact, the map $\Lambda_Q^\sharp\colon T^*T^*Q\rightarrow TT^*Q$ is a Lie algebroid isomorphism between $A(T^*(Q\times Q))$ and the Lie algebroid of the pair groupoid $T^*Q\times T^*Q \rightrightarrows T^*Q$. The following diagram summarizes this situation:
\begin{equation*}
    \xymatrix{ T^*T^*Q \ar[rr]^{\Lambda_Q^\sharp}  \ar[dr]_{\pi_{T^*Q}}  &&  TT^*Q \ar[dl]^{\tau_{T^*Q}} \\ & T^*Q}
\end{equation*}
More details can be found at~\cite{coste}.

\begin{proposition}\label{Prop:NewCotLift}
The map $\Phi^*\colon T^*(Q\times Q)\rightarrow T^*Q\times T^*Q$ is a Lie groupoid isomorphism of the cotangent Lie groupoid $T^*(Q\times Q)\rightrightarrows T^*Q$ associated with the pair groupoid $Q\times Q\rightrightarrows Q$ and the pair groupoid $T^*Q\times T^*Q\rightrightarrows T^*Q$. Moreover, the Lie algebroid isomorphism associated with $\Phi^*$ is just $\Lambda_Q^\sharp$:
\begin{equation*}
    \xymatrix{ T^*(T^*Q)\simeq A(T^*(Q\times Q)) \ar[rr]^{A\Phi^*=\Lambda_Q^\sharp}  \ar[dr]_{\pi_{T^*Q}}  &&  A(T^*Q\times T^*Q)\simeq TT^*Q \ar[dl]^{\tau_{T^*Q}} \\ & T^*Q}
\end{equation*}
\end{proposition}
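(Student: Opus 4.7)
The statement packages two claims which I would treat in sequence: first that $\Phi^*$ is a Lie groupoid isomorphism, and second that the induced Lie algebroid morphism $A\Phi^*$ equals $\Lambda_Q^\sharp$.

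For the first claim, $\Phi^*$ is a smooth fiberwise linear isomorphism of vector bundles and in particular a diffeomorphism. It remains to intertwine the structural maps. Recalling that the pair groupoid $T^*Q\times T^*Q\rightrightarrows T^*Q$ has the two projections as source and target, identity $\mu\mapsto(\mu,\mu)$, inversion $(\mu_0,\mu_1)\mapsto(\mu_1,\mu_0)$ and multiplication $((\mu_0,\mu_1),(\mu_1,\mu_2))\mapsto(\mu_0,\mu_2)$, I would plug the formulas \eqref{eq_structmapsCotang} into each compatibility. The source and target equalities $\alpha_{\mathrm{pair}}\circ\Phi^*=\tilde\alpha$ and $\beta_{\mathrm{pair}}\circ\Phi^*=\tilde\beta$ are immediate from $\Phi^*((\gamma,\delta))=(-\gamma,\delta)$; the unit identity $\Phi^*\circ\tilde\epsilon=\epsilon_{\mathrm{pair}}$ uses that the sign flip in the first entry of $\Phi^*$ exactly cancels the sign in $\tilde\epsilon(\gamma_q)=(-\gamma_q,\gamma_q)$; and for the product one observes that composable pairs in $T^*(Q\times Q)$ are of the form $((\gamma,\delta)_{q_0,q_1},(-\delta,\mu)_{q_1,q_2})$, whose image under $\Phi^*\times\Phi^*$ is $((-\gamma_{q_0},\delta_{q_1}),(\delta_{q_1},\mu_{q_2}))$, composable in the pair groupoid with product $(-\gamma_{q_0},\mu_{q_2})=\Phi^*((\gamma,\mu)_{q_0,q_2})$. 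Inversion compatibility then follows from the previous four.

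For the second claim, I would argue abstractly first and then verify the identification concretely. The Lie algebroid of any pair groupoid $P\times P\rightrightarrows P$ is $TP$ with anchor $\mathrm{id}_{TP}$, so $A(T^*Q\times T^*Q)\simeq TT^*Q$ canonically. By the Coste--Dazord--Weinstein theory \cite{coste}, $T^*(Q\times Q)\rightrightarrows T^*Q$ is the symplectic groupoid integrating the canonical Poisson structure on $T^*Q=A^*(Q\times Q)$, and its Lie algebroid is identified with $T^*T^*Q$ with anchor $\Lambda_Q^\sharp$. Since $\Phi^*$ is a Lie groupoid isomorphism over $\mathrm{id}_{T^*Q}$, $A\Phi^*$ is a Lie algebroid isomorphism over $\mathrm{id}_{T^*Q}$ that intertwines the anchors; because the target anchor is the identity, this forces $A\Phi^*=\Lambda_Q^\sharp$.

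The part I expect to require the most care, and which I view as the main obstacle, is checking that the abstract identification $A(T^*(Q\times Q))\simeq T^*T^*Q$ is implemented with the sign that produces $\Lambda_Q^\sharp$ rather than $-\Lambda_Q^\sharp$. To pin this down I would work in coordinates $(q^i,p_i)$ on $T^*Q$ and $(q_0^i,q_1^i,\gamma^0_i,\delta^1_i)$ on $T^*(Q\times Q)$: identify $A_{(q,p)}(T^*(Q\times Q))=\ker T_{(q,q,-p,p)}\tilde\alpha$ as the subspace of vectors $(0,\dot q^i,0,\dot\delta_i)$, verify by direct differentiation that $T\Phi^*$ sends such a vector to $(\dot q^i,\dot\delta_i)\in T_{(q,p)}T^*Q$ under the target-projection identification of $A(T^*Q\times T^*Q)$ with $TT^*Q$, and finally compute $\iota_{(0,\dot q^i,0,\dot\delta_i)}\omega$ at $(q,q,-p,p)$ (with $\omega$ the canonical symplectic form on $T^*(Q\times Q)$) to identify the corresponding element of $T^*_{(q,p)}T^*Q$. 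Feeding that covector into $\Lambda_Q^\sharp$ should return precisely $\dot q^i\partial_{q^i}+\dot\delta_i\partial_{p_i}$, confirming the identity $A\Phi^*=\Lambda_Q^\sharp$.
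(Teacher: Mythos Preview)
Your proposal is correct. For the first claim you do exactly what the paper does (it merely says ``a direct computation using \eqref{eq:PhiCotangent} and \eqref{eq_structmapsCotang}''; you spell out that computation). For the second claim your route differs slightly from the paper's. The paper computes $A\Phi^*$ directly as the tangent of $\Phi^*$ on the $\tilde\alpha$-vertical at $\tilde\epsilon(\gamma_q)=(-\gamma_q,\gamma_q)$, obtaining $(A\Phi^*)(0_{-\gamma_q},X_{\gamma_q})=(0_{\gamma_q},X_{\gamma_q})$, and then invokes the identifications of Appendices~\ref{App:LieAlgCotangGroup} and~\ref{App:ExamplePairGroup} to recognise this as $\Lambda_Q^\sharp$. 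You instead use the structural fact that a Lie algebroid morphism over $\mathrm{id}_{T^*Q}$ intertwines anchors: since the anchor on the target side (pair groupoid) is the identity and on the source side (cotangent groupoid) is $\Lambda_Q^\sharp$, one gets $A\Phi^*=\Lambda_Q^\sharp$ immediately. Your argument is cleaner and makes the sign worry you flag essentially moot: the anchor on $T^*(A^*G)$ is fixed by the paper's own convention in Appendix~\ref{App:LieAlgCotangGroup} to be $\Lambda_{A^*G}^\sharp$, so no coordinate check is actually needed. The paper's approach, on the other hand, is more concrete and shows explicitly what $A\Phi^*$ does to elements before any identification is applied.
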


\begin{proof}
    A direct computation using~\eqref{eq:PhiCotangent} and~\eqref{eq_structmapsCotang} shows that $\Phi^*$ is a Lie groupoid isomorphism. 

    The second part follows from
    $$(A\Phi^*)(0_{-\gamma_q},X_{\gamma_q})=(0_{\gamma_q},X_{\gamma_q})$$
    for $(0_{-\gamma_q},X_{\gamma_q})\in A_{\gamma_q}(T^*(Q\times Q))$ and the identification of $T^*(T^*Q)$ and $TT^*Q$ with $ A(T^*(Q\times Q))$ and $A(T^*Q\times T^*Q)$, respectively (see Appendices~\ref{App:LieAlgCotangGroup} and~\ref{App:ExamplePairGroup}).
\end{proof}

Now, the new characterization of the cotangent lift of a discretization map is provided.

\begin{corollary}\label{Corol:NewDefCotangLift}
Let $R_d\colon TQ\rightarrow Q\times Q$ be a discretization map. The map $\widetilde{\mathcal R}=T^*R_d\circ {\mathcal I}_{TQ}\colon T^*T^*Q\rightarrow T^*(Q\times Q)$ is a generalized discretization map on the cotangent Lie groupoid $T^*(Q\times Q)\rightrightarrows T^*Q$ associated with the pair groupoid $Q\times Q\rightrightarrows Q$.  

Moreover, $\widetilde{\mathcal R}$ is isomorphic to the discretization map $R^{T^*}\colon TT^*Q\rightarrow T^*Q\times T^*Q$ via the Lie algebroid isomorphism $\Lambda_Q^\sharp\colon T^*T^*Q\rightarrow TT^*Q$ and the Lie groupoid isomorphism $\Phi^*\colon T^*(Q\times Q) \rightarrow T^*Q\times T^*Q$. In particular, the following diagram is commutative:
\begin{equation*}
    \xymatrix{ T^*(T^*Q)\ar[r]^{\widetilde{\mathcal R}}  \ar[d]_{A(\Phi^*)=\Lambda_Q^\sharp}  &  T^*(Q\times Q) \ar[d]^{\Phi^*} \\ TT^*Q \ar[r]^{{\mathcal R}^{T^*}} & T^*Q\times T^*Q}
\end{equation*}
\end{corollary}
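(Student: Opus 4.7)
The plan is to mirror the proof of Corollary~\ref{Corol:Tangent} in the cotangent setting, transporting the already established discretization map $R^{T^*}$ on the pair groupoid $T^*Q\times T^*Q\rightrightarrows T^*Q$ across to the cotangent Lie groupoid $T^*(Q\times Q)\rightrightarrows T^*Q$ by means of the Lie groupoid isomorphism of Proposition~\ref{Prop:NewCotLift}, and then checking that the transported map is exactly $T^*R_d\circ {\mathcal I}_{TQ}$.

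First I would recall from~\cite{21MBLDMdD} that $R^{T^*}=\Phi^*\circ T^*R_d\circ \alpha_Q^{-1}\colon TT^*Q\to T^*Q\times T^*Q$ is a discretization map on the pair groupoid of $T^*Q$, so in particular a generalized discretization map on $T^*Q\times T^*Q\rightrightarrows T^*Q$. Next, Proposition~\ref{Prop:NewCotLift} supplies the Lie groupoid isomorphism $\Phi^*\colon T^*(Q\times Q)\to T^*Q\times T^*Q$ and identifies the induced Lie algebroid isomorphism as $A\Phi^*=\Lambda_Q^\sharp\colon T^*T^*Q\to TT^*Q$. Applying Theorem~\ref{th:GtoG'} to the inverse isomorphism $(\Phi^*)^{-1}$, whose associated Lie algebroid isomorphism is $(\Lambda_Q^\sharp)^{-1}$, I conclude that the map
$$\widetilde{\mathcal R}_0:=(\Phi^*)^{-1}\circ R^{T^*}\circ \Lambda_Q^\sharp\colon T^*T^*Q\longrightarrow T^*(Q\times Q)$$
is a generalized discretization map on the cotangent Lie groupoid $T^*(Q\times Q)\rightrightarrows T^*Q$.

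To finish, I would substitute the definition of $R^{T^*}$ and use ${\mathcal I}_{TQ}=\alpha_Q^{-1}\circ \Lambda_Q^\sharp$ to check $\widetilde{\mathcal R}_0=\widetilde{\mathcal R}$:
$$\widetilde{\mathcal R}_0=(\Phi^*)^{-1}\circ \Phi^*\circ T^*R_d\circ \alpha_Q^{-1}\circ \Lambda_Q^\sharp=T^*R_d\circ \alpha_Q^{-1}\circ \Lambda_Q^\sharp=T^*R_d\circ {\mathcal I}_{TQ}=\widetilde{\mathcal R}.$$
This establishes the first assertion of the corollary, and simultaneously the commutativity of the displayed diagram, which is just a restatement of the identity $\Phi^*\circ \widetilde{\mathcal R}=R^{T^*}\circ \Lambda_Q^\sharp$.

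There is essentially no hard step: the argument reduces to (i) the transport principle of Theorem~\ref{th:GtoG'}, (ii) the identification $A\Phi^*=\Lambda_Q^\sharp$ from Proposition~\ref{Prop:NewCotLift}, and (iii) the bookkeeping identity ${\mathcal I}_{TQ}=\alpha_Q^{-1}\circ \Lambda_Q^\sharp$. The only point requiring care is choosing the correct direction of the isomorphism, since we must transport from the pair groupoid of $T^*Q$ (where $R^{T^*}$ is known to be a discretization map) to the cotangent groupoid $T^*(Q\times Q)\rightrightarrows T^*Q$ (the target of $\widetilde{\mathcal R}$); this forces the use of $(\Phi^*)^{-1}$ rather than $\Phi^*$ in the application of Theorem~\ref{th:GtoG'}, after which the two cotangent lifts cancel cleanly.
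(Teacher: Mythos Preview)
Your proof is correct and follows essentially the same route as the paper: transport $R^{T^*}$ from the pair groupoid $T^*Q\times T^*Q$ to the cotangent groupoid $T^*(Q\times Q)$ via Theorem~\ref{th:GtoG'} applied to the isomorphism $(\Phi^*)^{-1}$ of Proposition~\ref{Prop:NewCotLift}, then unwind the definitions using $A(\Phi^*)=\Lambda_Q^\sharp$ and ${\mathcal I}_{TQ}=\alpha_Q^{-1}\circ\Lambda_Q^\sharp$ to identify the result with $T^*R_d\circ{\mathcal I}_{TQ}$. Your explicit remark about choosing the correct direction of the isomorphism is a helpful clarification of what the paper leaves implicit.
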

\begin{proof}
    From Theorem~\ref{th:GtoG'}, Proposition~\ref{Prop:NewCotLift} and the fact that ${\mathcal R}^{T^*}\colon TT^*Q\rightarrow T^*Q\times T^*Q$ is a discretization map, it follows that $$ \widetilde{\mathcal R}=\left(\Phi^*\right)^{-1}\circ {\mathcal R}^{T^*} \circ A(\Phi^*)\colon T^*T^*Q \rightarrow T^*(Q\times Q)$$
    is a generalized discretization map on the cotangent Lie groupoid $T^*(Q\times Q)\rightrightarrows T^*Q$ associated with the pair Lie groupoid $Q\times Q\rightrightarrows Q$.

    Moreover, Proposition~\ref{Prop:NewCotLift} guarantees that $A(\Phi^*)=\Lambda_Q^\sharp$. Thus, we conclude that
    $$  \widetilde{\mathcal R}=\left((\Phi^*)^{-1} \circ \Phi^*\right)\circ T^*R_d\circ \left(\alpha_Q^{-1} \circ \Lambda_Q^\sharp\right)=T^*R_d\circ {\mathcal I}_{T^*Q}.$$
\end{proof}
Corollary~\ref{Corol:NewDefCotangLift} is the key point to define the cotangent lift of a generalized discretization map as done in the following section.

\subsection{General situation} \label{Subsec:GeneralCotangentLift}

Let $G\stackrel[\beta]{\alpha}{\rightrightarrows} Q$ be a Lie groupoid with Lie algebroid $\tau\colon AG \rightarrow Q$.  The cotangent Lie groupoid \cite{coste} is given by $T^*G\stackrel[\tilde{\beta}]{\tilde{\alpha}}{\rightrightarrows} A^*G$ and it is associated with the Lie algebroid $\pi_{A^*G}\colon T^*(A^*G)\rightarrow A^*G$.
The structural maps of the cotangent groupoid are defined as follows (see Appendix~\ref{App:SymplGroupoid}). For a section $X\colon Q\rightarrow AG$ of the Lie algebroid, the source map $\tilde{\alpha}$, defines for every $\gamma_g\in T^*G$ the element $\tilde{\alpha}(\gamma_g)\in A^*_{\alpha(g)}(G)$ given by 
\begin{equation}\label{eq:sourcecotangent}\langle \tilde{\alpha}(\gamma_g), X(\alpha(g))\rangle=\langle \gamma_g,\overrightarrow{X}(g)\rangle\, ,\qquad g\in G, \; \gamma_g\in T^*G,\end{equation}
where $\overrightarrow{X}$ is the right-invariant vector field on $G$ associated with the section $X\colon Q\rightarrow AG$ (see Appendix~\ref{App:LieAlgForGroup}). 

The target map $\tilde{\beta}$ for every $\gamma_g\in T^*_gG$ defines $\tilde{\beta}(\gamma_g)$ in $A^*_{\beta(q)}G$ as
\begin{equation}\label{eq:targetcotangent}\langle \tilde{\beta}(\gamma_g), X(\beta(g))\rangle=\langle \gamma_g,\overleftarrow{X}(g)\rangle\quad \mbox{for } X\in \Gamma(AG)\, ,\end{equation}
where $\overleftarrow{X}$ is the left-invariant vector field on $G$ associated with $X$ (see Appendix~\ref{App:LieAlgForGroup}).

The identity map sends every $\alpha_q$ in $A^*_qG$ to $\tilde{\epsilon}(\alpha_q)$ in $T^*_{\epsilon(q)}G$ given by
\begin{equation}\label{eq:idcotangentlift} \tilde{\epsilon}(\alpha_q)=\left( {\rm id}-T_q\epsilon \circ T_{\epsilon(q)}\alpha\right)^*(\alpha_q)\, , \end{equation}
where $\left( {\rm id}-T_q\epsilon \circ T_{\epsilon(q)}\alpha\right)\colon T_{\epsilon(q)}G\rightarrow A_qG$ is the projection defined by 
$$\left( {\rm id}-T_q\epsilon \circ T_{\epsilon(q)}\alpha\right)(v_{\epsilon(q)})=v_{\epsilon(q)}-T_q\epsilon \left(T_{\epsilon(q)}\alpha(v_{\epsilon(q)})\right)\in V_{\epsilon(q)}\alpha=A_qG.$$

Moreover, $A(T^*G)$ is isomorphic to $T^*(A^*G)$ because of the existence of the following isomorphism:
\begin{eqnarray}\label{eq:isocotangentlift} 
    \Psi\colon T^*_{\alpha_q}(A^*G)&\longrightarrow & A_{\tilde{\epsilon}(\alpha_q)}(T^*G)=V_{\tilde{\epsilon}(\alpha_q)}\tilde{\alpha} \subseteq T_{\tilde{\epsilon}(\alpha_q)}(T^*G) \\ w_{\alpha_q} &\longmapsto & \Lambda^\sharp_G \left( \left(T_{\tilde{\epsilon}(\alpha_q)}^* \tilde{\beta} \right)(w_{\alpha_q})\right) \, , \nonumber
\end{eqnarray}
where $\Lambda_G$ is the canonical Poisson structure on $T^*G$, that is, 
$$i_{\Lambda^\sharp_G \left( \left(T_{\tilde{\epsilon}(\alpha_q)}^* \tilde{\beta} \right)(w_{\alpha_q})\right) } \omega_G\left(\tilde{\epsilon}(\alpha_q)\right)=-\left( T^*_{\tilde{\epsilon}(\alpha_q)}\tilde{\beta}\right)(w_{\alpha_q})\, ,$$
with $\omega_G$ the canonical symplectic structure of $T^*G$ (see Appendix~\ref{App:LieAlgCotangGroup}). 

Before introducing the definition of the cotangent lift of a generalized discretization map, let us recall the definition of the canonical antisymplectomorphism ${\mathcal I}_A\colon T^*A^*\rightarrow T^*A$ between the vector bundles $\pi_{A^*}\colon T^*A^*\rightarrow A^*$ and $T^*A\rightarrow A^*$ for an arbitrary vector bundle $\tau\colon A \rightarrow Q$. The local expression of ${\mathcal I}_A$ in local fibered coordinates on $T^*A^*$ and $T^*A$ is 
\begin{equation}\label{Eq:IA} {\mathcal I}_A(q^i,y_\alpha;\delta q^i,\delta y_\alpha)=(q^i,\delta y_\alpha;-\delta q^i,y_\alpha)\end{equation}
and ${\mathcal I}^*_A(\omega_A)=-\omega_{A^*}$, see~\cite{XuMac} for more details. Note that for the particular case $A=TQ$, ${\mathcal I}_A={\mathcal I}_{TQ}=\alpha_Q^{-1}\circ \Lambda_Q^\sharp$.

Let us prove now the main result of this section. 

\begin{theorem}\label{thm:CotangentLift}
    If ${\mathcal R}\colon AG \rightarrow G$ is a generalized discretization map on the Lie groupoid $G\rightrightarrows Q$, then $\widetilde{\mathcal R}= T^*{\mathcal R} \circ {\mathcal I}_{AG}\colon T^*(A^*G)\rightarrow T^*G$ is a generalized discretization map on the cotangent Lie groupoid $T^*G\rightrightarrows A^*G$.
\end{theorem}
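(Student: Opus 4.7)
I will verify the two defining properties of Definition~\ref{Def:newdiscretization} for the map $\widetilde{\mathcal R}=T^*{\mathcal R}\circ {\mathcal I}_{AG}$ on the cotangent Lie groupoid $T^*G\rightrightarrows A^*G$, whose structural maps are recalled in Equations~\eqref{eq:sourcecotangent}--\eqref{eq:idcotangentlift} and whose Lie algebroid $A(T^*G)$ is identified with $\pi_{A^*G}\colon T^*(A^*G)\rightarrow A^*G$ via the isomorphism $\Psi$ of~\eqref{eq:isocotangentlift}. I will work with the local coordinates of Subsection~\ref{Sec:Local}: coordinates $(q^i,u^\alpha)$ on $G$ adapted to the source so that $\alpha(q,u)=q$ and $\epsilon(q)=(q,0)$, induced coordinates $(q^i,y^\alpha)$ on $AG$ and dual coordinates $(q^i,y_\alpha)$ on $A^*G$. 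Since ${\mathcal R}$ is a local diffeomorphism near the zero section by Theorem~\ref{thm:diffeo}, its cotangent lift $T^*{\mathcal R}\colon T^*AG\to T^*G$ is a well-defined local diffeomorphism, and everything happens in a neighbourhood of $\tilde\epsilon(A^*G)$.

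\emph{Step 1 (Property 1).} The zero covector at $\alpha_q$ reads $0_{T^*(A^*G)}(\alpha_q)=(q^i,y_\alpha;0,0)$. By the local form~\eqref{Eq:IA} of ${\mathcal I}_{AG}$, this is sent to $(q^i,0;0,y_\alpha)\in T^*_{0_{AG}(q)}AG$. The Jacobian of ${\mathcal R}$ at the zero section has the block form
$$\left.D{\mathcal R}\right|_{(q,0)}=\begin{pmatrix}I_n & \partial{\mathcal R}^i/\partial y^\beta \\ 0 & I_r\end{pmatrix},$$
so pushing the covector $(0,y_\alpha)$ forward through $T^*{\mathcal R}$ produces the covector at $\epsilon(q)=(q,0)$ whose local components are $(0,y_\alpha)$. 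On the other hand, an arbitrary $V^i\partial_{q^i}+V^\alpha\partial_{u^\alpha}\in T_{\epsilon(q)}G$ is sent by $({\rm id}-T_q\epsilon\circ T_{\epsilon(q)}\alpha)$ to $V^\alpha\partial_{u^\alpha}|_{\epsilon(q)}$, so Equation~\eqref{eq:idcotangentlift} gives $\tilde\epsilon(\alpha_q)(\,\cdot\,)=y_\alpha V^\alpha$. Hence $\widetilde{\mathcal R}(0_{T^*(A^*G)}(\alpha_q))=\tilde\epsilon(\alpha_q)$.

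\emph{Step 2 (Property 2).} For $w_{\alpha_q}\in T^*_{\alpha_q}(A^*G)$ with local components $(p_i,p^\alpha)$, the vertical lift $w^{\rm v}_{0_{T^*(A^*G)}(\alpha_q)}$ has local coordinates $(0,0;p_i,p^\alpha)$, and differentiating~\eqref{Eq:IA} yields
$$T_{(q,y_\alpha;0,0)}{\mathcal I}_{AG}(0,0;p_i,p^\alpha)=(0,p^\alpha;-p_i,0)\in T_{(q,0;0,y_\alpha)}(T^*AG).$$
Applying $T(T^*{\mathcal R})$ and exploiting the fiber-block identity $\partial{\mathcal R}^\alpha/\partial y^\beta|_{(q,0)}=\delta^\alpha_\beta$ produces an explicit tangent vector in $T_{\tilde\epsilon(\alpha_q)}(T^*G)$. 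I will then decompose this vector via the splitting~\eqref{eq:splittingTG} applied to $T^*G\rightrightarrows A^*G$,
$$T_{\tilde\epsilon(\alpha_q)}T^*G\simeq A_{\alpha_q}(T^*G)\oplus T_{\alpha_q}\tilde\epsilon(T_{\alpha_q}A^*G),$$
and show that the $A(T^*G)$-component is precisely $\Psi(w_{\alpha_q})$. Equivalently, via~\eqref{eq:targetcotangent} and the definition of $\Psi$ through $T^*\tilde\beta$, this reduces to the pointwise identity $T_{\tilde\epsilon(\alpha_q)}\tilde\beta\bigl(T_0\widetilde{\mathcal R}_{\alpha_q}(w^{\rm v})\bigr)=w_{\alpha_q}$, which boils down to the fiber-block being the identity and the source/target blocks being symmetric in the appropriate sense. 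The second condition of Definition~\ref{Def:newdiscretization} then reads $T_0\widetilde{\mathcal R}_{\alpha_q}(w^{\rm v})=\Psi(w_{\alpha_q})+T_{\alpha_q}\tilde\epsilon(T_{\tilde\epsilon(\alpha_q)}\tilde\alpha(T_0\widetilde{\mathcal R}_{\alpha_q}(w^{\rm v})))$, which is exactly the splitting identity just established.

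\emph{Main obstacle.} The main technical point is translating the purely coordinate computation of $T(T^*{\mathcal R})\circ T({\mathcal I}_{AG})$ into the coordinate-free identification $\Psi$ between $T^*(A^*G)$ and $A(T^*G)$. Because $\Psi$ is built from the canonical Poisson tensor $\Lambda_G$ and $T^*\tilde\beta$, the matching requires carefully tracking how the ``$p_i$'' and ``$p^\alpha$'' components of $w_{\alpha_q}$ redistribute across the right-invariant picture encoded by $\tilde\alpha$ (hence killed under $T\tilde\alpha$) and the transverse direction $T\tilde\epsilon\circ T\tilde\beta$; this is the only place where the full structure of the cotangent groupoid recalled in Appendices~\ref{App:SymplGroupoid} and~\ref{App:LieAlgCotangGroup} is used in an essential way, while the rest of the argument is a direct consequence of the defining conditions of ${\mathcal R}$.
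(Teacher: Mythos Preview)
Your overall strategy and Step~1 are correct and coincide with the paper's approach: verify the two conditions of Definition~\ref{Def:newdiscretization} for $\widetilde{\mathcal R}$, the first one essentially by noting that ${\mathcal I}_{AG}$ maps the zero section of $T^*(A^*G)\to A^*G$ to the zero section of $T^*(AG)\to A^*G$, and that the block structure of $D{\mathcal R}|_{(q,0)}$ (hence of $D({\mathcal R}^{-1})|_{(q,0)}$) forces $T^*{\mathcal R}(q,0;0,y_\alpha)=(q,0;0,y_\alpha)=\tilde\epsilon(\alpha_q)$.

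Step~2, however, contains a genuine gap. Your ``equivalent'' reformulation
\[
T_{\tilde\epsilon(\alpha_q)}\tilde\beta\bigl(T_0\widetilde{\mathcal R}_{\alpha_q}(w^{\rm v})\bigr)=w_{\alpha_q}
\]
does not type-check: the left-hand side lies in $T_{\alpha_q}(A^*G)$ while $w_{\alpha_q}\in T^*_{\alpha_q}(A^*G)$. The isomorphism $\Psi$ is built from $T^*\tilde\beta$ (pullback) and $\Lambda_G^\sharp$, not from $T\tilde\beta$ (pushforward); these are different objects and the reduction you claim does not hold. More substantively, the assertion that the verification ``boils down to the fiber-block being the identity'' underestimates what is needed. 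When one actually computes $\Psi({\rm d}q^i)$ and $\Psi({\rm d}y_\mu)$ in the adapted coordinates, second-order data from the groupoid multiplication appear: the anchor components $\rho^i_\gamma(q)=\partial\beta^i/\partial u^\gamma|_{(q,0)}$ enter $\Psi({\rm d}q^i)$, and a term $\partial^2 p^\gamma/\partial u^{\gamma'}\partial\bar u^\mu|_{(q,0,0)}\,y_\gamma$ enters $\Psi({\rm d}y_\mu)$. Likewise, $T_{(q,y;0,0)}\widetilde{\mathcal R}(\partial/\partial(\delta y_\mu))$ produces a term with $\partial^2({\mathcal R}^{-1})^{\gamma'}/\partial u^\gamma\partial u^\mu|_{(q,0)}\,y_{\gamma'}$. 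The second condition holds precisely because, after subtracting $T\tilde\epsilon\circ T\tilde\alpha\circ T\widetilde{\mathcal R}$, these second-order contributions match; this cancellation uses the local expressions of $\tilde\alpha$ and $\tilde\beta$ (through $R^\gamma_\mu$, $L^\gamma_\mu$ and $\rho^i_\gamma$) in an essential way, and is not a consequence of the first-order block identity alone. Your outline never writes any of these terms down, so as it stands Step~2 is an intention rather than a proof.
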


The result has been proved at the beginning of the section for the particular case of the pair groupoid, that is, $G=Q\times Q\rightrightarrows Q$.

When $G$ is a Lie group with Lie algebra $\mathfrak{g}$, the cotangent Lie groupoid is $T^*G\rightrightarrows \mathfrak{g}^*$. For a generalized discretization map ${\mathcal R}\colon \mathfrak{g}\rightarrow G$ on $G$, we consider the cotangent lift of ${\mathcal R}$ as the map
$$\widetilde{\mathcal R}=T^*{\mathcal R}\circ {\mathcal I}_{\mathfrak{g}}\colon \mathfrak{g}^*\times \mathfrak{g} \rightarrow G\times \mathfrak{g}^*,$$
where the canonical identification $T^*\mathfrak{g}^*\simeq \mathfrak{g}^*\times \mathfrak{g}$ and the right trivialization $T^*G\simeq G\times \mathfrak{g}^*$ have been used. 

Now, under the identification $T^*\mathfrak{g}\simeq \mathfrak{g}\times \mathfrak{g}^*$, it follows that the map ${\mathcal I}_{\mathfrak{g}}\colon \mathfrak{g}^*\times \mathfrak{g} \rightarrow   \mathfrak{g}\times \mathfrak{g}^*$ is given by 
$${\mathcal I}_{\mathfrak{g}}(\mu,\xi)=(\xi,\mu). $$ Thus, using~\eqref{eq:sourcecotangent} we have that for every $(\mu,\xi)\in \mathfrak{g}^*\times \mathfrak{g}$, 
$$\widetilde{\mathcal R} (\mu,\xi)=\left( {\mathcal R}(\xi),\tilde{\alpha}((T_{{\mathcal R}(\xi)}{\mathcal R}^{-1})^*(\mu))\right)\, .$$

Let us prove that the map $\widetilde{\mathcal R} $ satisfies both properties in Definition~\ref{Def:newdiscretization}. 
\begin{enumerate}
    \item Under the identifications $T^*\mathfrak{g}^*\simeq \mathfrak{g}^*\times \mathfrak{g}$ and $T^*G\simeq G\times \mathfrak{g}^*$, we have that $\tilde{\epsilon}(\mu)=({\rm e},\mu)$, where $e$ is the identity element of the Lie group $G$. It must be shown that
    $$\widetilde{\mathcal R}  (\mu,0_{\mathfrak{g}})= ({\rm e},\mu) \quad \forall \;\mu\in \mathfrak{g}^*.$$
That follows from ${\mathcal R}(0_{\mathfrak{g}})={\rm e}$ and the fact that $T_{0_{\mathfrak{g}}}{\mathcal R}\colon T_{0_{\mathfrak{g}}}\mathfrak{g}\simeq \mathfrak{g} \rightarrow T_{\rm e}G\simeq \mathfrak{g} $ is the identity map. Thus, $T_{\rm e}{\mathcal R}^{-1}\colon\mathfrak{g} \rightarrow T_{0_{\mathfrak{g}}}\mathfrak{g} \simeq \mathfrak{g}$ and  $\left(T_{\rm e}{\mathcal R}^{-1}\right)^*\colon\mathfrak{g}^* \rightarrow \mathfrak{g}^*$  are also identity maps. 
    \item Under the right trivialization $G\times \mathfrak{g}^*\rightrightarrows \mathfrak{g}^*$ of the cotangent Lie groupoid $T^*G \rightrightarrows \mathfrak{g}^* $, we have that the source of the Lie groupoid $G\times \mathfrak{g}^* \rightrightarrows \mathfrak{g}^*$ is just the canonical projection on the second factor ${\rm pr}_2\colon G\times \mathfrak{g}^*\rightarrow \mathfrak{g}^*$ and the identity section $\tilde{\epsilon}$ is the map
    $$\tilde{\epsilon}=(C_{\rm e},{\rm id})\colon    \mathfrak{g}^* \rightarrow G \times  \mathfrak{g}^*, \quad \tilde{\epsilon}(\mu)=({\rm e},\mu)\in  G \times  \mathfrak{g}^*. $$
    In addition, the fiber of the Lie algebroid of $G\times \mathfrak{g}^*\rightrightarrows \mathfrak{g}^* $ at $\mu\in \mathfrak{g}^*$ is $$A_\mu(G\times \mathfrak{g}^*)\simeq \mathfrak{g}\times \{0_{T_\mu \mathfrak{g}^*}\}$$
    and
    $$ T_\mu\tilde{\epsilon}(T_\mu \mathfrak{g}^*)=T_\mu\left(C_{\rm e},{\rm id} \right)(T_\mu \mathfrak{g}^*)=\{0_{\mathfrak{g}}\}\times T_\mu \mathfrak{g}^*. $$
    Moreover, using the canonical identification $\mathfrak{g}^*\times \mathfrak{g}\simeq T^*\mathfrak{g}^*$, we have that for every $(\xi,0_{T_\mu\mathfrak{g}^*})\in A_\mu (G\times \mathfrak{g}^*)\simeq \mathfrak{g}$,
    $$ (\xi,0_{T_\mu\mathfrak{g}^*})^{\mathtt{v}}_{(\mu,0)}=\left.\dfrac{\rm d}{\rm dt}\right|_{t=0}(\mu,t\xi).$$
    As stated in the second item in Definition~\ref{Def:newdiscretization}, we must prove that 
    \begin{align*}
    \left. \dfrac{\rm d}{{\rm d}t} \right|_{t=0} \left( \widetilde{\mathcal R}(\mu,t\xi)\right)& =\left. \dfrac{\rm d}{{\rm d}t} \right|_{t=0} \left( {\mathcal R}(t\xi), \tilde{\alpha}\left(  T_{{\mathcal R}(t\xi)} {\mathcal R}^{-1}\right)^*(\mu)\right)\\ &= \left(\xi, \left. \dfrac{\rm d}{{\rm d}t} \right|_{t=0}  \tilde{\alpha}\left(  T_{{\mathcal R}(t\xi)} {\mathcal R}^{-1}\right)^*(\mu)\right).
    \end{align*}
    It follows using that ${\mathcal R}(0_{\mathfrak{g}})={\rm e}$ and the fact that $T_{0_{\mathfrak{g}}} {\mathcal R}\colon T_{0_{\mathfrak{g}}} \mathfrak{g}\simeq \mathfrak{g}\rightarrow T_{\rm e}G\simeq \mathfrak{g}$ is just the identity map.
   \end{enumerate}

This concludes the proof for the Lie group case. Let us prove Theorem~\ref{thm:CotangentLift} in general.

\begin{proof}[Proof of Theorem~\ref{thm:CotangentLift}]
The map $\widetilde{\mathcal R}$ is a generalized discretization map if it satisfies the following two properties adapted from Definition~\ref{Def:newdiscretization}:
\begin{enumerate}
    \item $\widetilde{\mathcal R}\circ 0_{T^*(A^*G)}=\tilde{\epsilon}$, where $0_{T^*(A^*G)}\colon A^*G \rightarrow T^*(A^*G)$ is the zero 1-form on $A^*G$ and $\tilde{\epsilon}\colon A^*G \rightarrow T^*G$ is the identity section of the Lie groupoid $T^*G\rightrightarrows A^*G$.

    \item For every $\alpha_q\in A^*_qG$ and $\gamma_{\alpha_q}\in T^*_{\alpha_q}(A^*G)$,
    \begin{align*} \left( T_{0_{T^*(A^*G)(\alpha_q)}}(\widetilde{\mathcal R}) \right) &(\gamma_{\alpha_q})^{\rm v}_{0_{T^*(A^*G)(\alpha_q)}}=\Psi (\gamma_{\alpha_q})\\+&\left[  T_{\alpha_q}\tilde{\epsilon} \circ T_{\tilde{\epsilon}(\alpha_q)} \tilde{\alpha} \circ T_{0_{T^*(A^*G)(\alpha_q)}}\widetilde{\mathcal R}\right](\gamma_{\alpha_q})^{\rm v}_{0_{T^*(A^*G)(\alpha_q)}}\, .\end{align*}
\end{enumerate}

As $ {\mathcal I}_{AG}\colon T^*(A^*G)\rightarrow T^*(AG)$ is a vector bundle morphism over $A^*G$, that is,
\begin{equation*}
    \xymatrix{ T^*(A^*G) \ar[rr]^{{\mathcal I}_{AG}} \ar[dr]_{\pi_{A^*G}} && T^*(AG) \ar[dl]^{\mathtt{v}^*} \\ & A^*G &}
\end{equation*} it is satisfied that 
$$ {\mathcal I}_{AG}\circ 0_{T^*(A^*G)}=0_{T^*(AG)}\, ,$$
where $0_{T^*(AG)}\colon A^*G \rightarrow T^*(AG)$ is the zero section of the vector bundle $\mathtt{v}^*\colon T^*(AG)\rightarrow A^*G$, that is, for $\alpha_q\ A_q^*G$
$$ 0_{T^*(A^*G)}(\alpha_q)\in T^*_{0_{AG}(q)}(AG).$$
For $X_{0_{AG}(q)}\in T_{0_{AG}(q)} (AG)$, we have
$$X_{0_{AG}(q)}=(a_q)^{\rm v}_{0_{AG}(q)}+(T_q0_{AG})(v_q), $$ where $a_q\in A_qG$ and $v_q\in T_qQ$, also
$$\langle 0_{T^*(AG)}(\alpha_q), X_{0_{AG}(q)} \rangle=\langle \alpha_q,a_q\rangle\, .$$
In addition, it follows that
$$T^*\widetilde{\mathcal R}(0_{T^*(AG)}(\alpha_q))\in T^*_{{\mathcal R}(0_{AG}(q)) }G=T^*_{\epsilon(q)}G$$
because ${\mathcal R}$ is a generalized discretization map. 

Moreover, if $X_{\epsilon(q)}\in T_{\epsilon(q)}G$, then 
$$X_{\epsilon(q)}=a_q+(T_q\epsilon)(v_q),$$
with $a_q\in A_qG$ and $v_q\in T_qQ$. We deduce that
\begin{align*}
    \langle T^*{\mathcal R}(0_{T^*(AG)}(\alpha_q)) , & a_q+(T_q\epsilon)(v_q) \rangle= \langle 0_{T^*(AG)}(\alpha_q) , (T_{\epsilon(q)} {\mathcal R}^{-1})( a_q+(T_q\epsilon)(v_q)) \rangle \\ =& 
    \langle 0_{T^*(AG)}(\alpha_q) , (T_{\epsilon(q)} {\mathcal R}^{-1})( a_q)+T_q({\mathcal R}^{-1}\circ \epsilon)(v_q)) \rangle \\ =&   \langle 0_{T^*(AG)}(\alpha_q) ,  (a_q)^{\rm v}_{0_{AG}(q)}+T_q0_{AG}(v_q)\rangle = \langle \alpha_q,a_q\rangle\, .
\end{align*}

On the other hand, we have that
$$\langle \tilde{\epsilon}(\alpha_q), a_q+(T_q\epsilon)(v_q)\rangle=\langle \alpha_q,a_q+(T_q\epsilon)(v_q)-(T_q\epsilon)(v_q)\rangle=\langle \alpha_q, a_q\rangle\, .$$

The first property has been proved. Let us prove locally the second one. Take local coordinates as in Section~\ref{Sec:Local}:
\begin{itemize}
    \item $(q^i)$ local coordinates on $Q$;
    \item $(q^i,u^\gamma)$ local coordinates on $G$ such that $\alpha(q,u)=q$, $\epsilon(q)=(q,0)$;
    \item $\{e_\gamma=\partial/ \partial u^\gamma \circ \epsilon \}$ is a local basis of the sections $\Gamma(AG)$ of the vector bundle $AG\rightarrow Q$;
    \item $(q^i,y^\gamma)$ local coordinates on $AG$;
    \item $(q^i,y_\gamma)$ the dual local coordinates on $A^*G$.
\end{itemize}
The inverse map of ${\mathcal R}\colon AG\rightarrow G$ has as local expression:
$${\mathcal R}^{-1}(q,u)=\left( \left({\mathcal R}^{-1}\right)^i(q,u),\left({\mathcal R}^{-1}\right)^\gamma(q,u) \right)$$ such that for every $i$, $\gamma$:
\begin{enumerate}
    \item $\left({\mathcal R}^{-1}\right)^i(q,0)=q^i$, $\left({\mathcal R}^{-1}\right)^\gamma(q,0)=0^\gamma$;
    \item $\left( \left.\dfrac{\partial \left({\mathcal R}^{-1}\right)^\gamma}{\partial u^{\gamma'} }\right|_{(q,0)}\right)={\rm Id}$.
\end{enumerate}
The following local coordinates are necessary:
\begin{itemize}
    \item $(q^i,y_\gamma;\delta q^i, \delta y_\gamma)$ on $T^*(A^*G)$;
     \item $(q^i,y^\gamma;\Pi_i, \Pi_\gamma)$ on $T^*(AG)$;
     \item $(q^i,u^\gamma;p_i, p_\gamma)$ on $T^*G$;
          \item $(q^i,y_\gamma;\dot{q}^i,\dot{y}_\gamma)$ on $T(A^*G)$.
\end{itemize}
Following the framework developed in~\cite{2015JCDavidLocalDiscrete}, we obtain now the local expressions of $\tilde{\alpha}\colon T^*G \rightarrow A^*G$ and $\tilde{\beta}\colon T^*G\rightarrow A^*G$. Suppose that $\beta(q,u)=(\beta^i(q,u))$. If $g\equiv (q,u)$, $\bar{g} \equiv(\bar{q},\bar{u})$ are points of $G$, then the pair $(g,\bar{g})$ is composable, that is, it lives in $ G_2$,
 if and only if $\beta^i(q,u)=\bar{q}^i$, for all $i$. Then the multiplication is defined as $$g\cdot \bar{g}=(q,p(q,u,\bar{u})).$$
 Let \begin{eqnarray*}
     \rho^i_\gamma(q)&=&\left.\dfrac{\partial \beta^i}{\partial u^\gamma} \right|_{(q,0)}\, , \\
     L^\gamma_\mu(q,u)&=&\left.\dfrac{\partial p^\gamma}{\partial \bar{u}^\mu} \right|_{(q,u,0)}\, , \\
      R^\gamma_\mu(q,\bar{u})&=&\left.\dfrac{\partial p^\gamma}{\partial u^\mu} \right|_{(q,0,\bar{u})}\, ,
     \end{eqnarray*}
    the left and right-invariant vector fields are, respectively,
    \begin{eqnarray}
     \overleftarrow{e_\gamma}(q,u)&=& L^\mu_\gamma(q,u) \, \left. \dfrac{\partial }{\partial u^\mu}\right|_{(q,u)} \, , \label{eq:left}\\
     \overrightarrow{e_\gamma}(q,u)&=&-\rho^i_\gamma(q) \, \left. \dfrac{\partial }{\partial q^i} \right|_{(q,u)} + R^\mu_\gamma (q,u) \, \left. \dfrac{\partial }{\partial u^ \mu} \right|_{(q,u)} \, , \label{eq:right}
     \end{eqnarray}
     see~\cite{2015JCDavidLocalDiscrete}. Thus, using~\eqref{eq:sourcecotangent},~\eqref{eq:targetcotangent},~\eqref{eq:idcotangentlift} ,~\eqref{eq:left} and~\eqref{eq:right}, we deduce that
     \begin{eqnarray} \tilde{\alpha}(q^i,u^\gamma, p_i,p_\gamma)&=&\left(q^i,-\rho^i_\mu(q)p_i+R^\gamma_\mu(q,u) \, p_\gamma \right)\, , \label{eq:alpha}\\
      \tilde{\beta}(q^i,u^\gamma, p_i,p_\gamma)&=&\left(\beta^i(q,u),L^\gamma_\mu(q,u) p_\gamma\right) \, ,  \label{eq:beta} \\
      \widetilde{\epsilon}(q^i,y_\gamma)&=&(q^i,0;0,y_\gamma). \label{eq:epsilon}
     \end{eqnarray}
    On the other hand, the local expression of the isomorphism $\Lambda^\sharp_G\colon T^*(T^*G)\rightarrow T(T^*G)$ is 
     \begin{equation*}
         \begin{array}{lcl}
        \Lambda^\sharp_G({\rm d}q^i)=-\dfrac{\partial}{\partial p_i},  &&  \Lambda^\sharp_G({\rm d}u^\gamma)=-\dfrac{\partial}{\partial p_\gamma}, \\
         \Lambda^\sharp_G({\rm d}p_i)=\dfrac{\partial}{\partial q_i},  &&  \Lambda^\sharp_G({\rm d}p_\gamma)=-\dfrac{\partial}{\partial u^\gamma}.
         \end{array}
     \end{equation*}
     Equation~\eqref{eq:beta} and the equalities
          \begin{equation*}
         \begin{array}{lll}
         \left.\dfrac{\partial p^i}{\partial q^j}\right|_{(q,0)}=\delta^i_j, &  \left.\dfrac{\partial p^\gamma}{\partial \bar{u}^mu}\right|_{(q,0,0)}=\delta^\gamma_\mu, &
         \left.\dfrac{\partial^2 p^\gamma}{\partial q^i\partial \bar{u}^\mu}\right|_{(q,0,0)}=0,
         \end{array}
     \end{equation*}
     from~\cite{2015JCDavidLocalDiscrete}, lead to the following characterization of the isomorphism    
     $\Psi\colon T^*(A^*G)\rightarrow A(T^*G)=V_{\tilde{\epsilon}(A^*G)}\tilde{\alpha}$:
     \begin{eqnarray*}
         \Psi\left(\left.{\rm d}q^i\right|_{(q,y)}\right)&=& -\left.\dfrac{\partial}{\partial p_i}\right|_{(q,0;0,y)}-\rho^i_\gamma(q) \left.\dfrac{\partial}{\partial p_\gamma}\right|_{(q,0;0,y)}\, ,\\
          \Psi\left(\left.{\rm d}y_\mu\right|_{(q,y)}\right)&=& \left.\dfrac{\partial}{\partial u^\mu}\right|_{(q,0;0,y)}-\left.\dfrac{\partial^2 p^\gamma}{\partial u^{\gamma'} \partial \bar{u}^\mu}\right|_{(q,0,0)}\, y_\gamma\,  \left.\dfrac{\partial}{\partial p_{\gamma'}}\right|_{(q,0;0,y)}\, .
     \end{eqnarray*}
    Remember that $\widetilde{\mathcal R}=T^*{\mathcal R}\circ {\mathcal I}_{AG}\colon T^*(A^*G)\rightarrow T^*G$. Therefore, from~\eqref{Eq:IA} it follows that
    \begin{align}
    \widetilde{\mathcal R}&(q^i,y_\gamma;\delta q^i,\delta y_\gamma) = \left( {\mathcal R}^i(q,\delta y), {\mathcal R}^\gamma(q,\delta y); -\delta q^j\, \left. \dfrac{\partial \left({\mathcal R}^{-1}\right)^j}{\partial q^i} \right|_{{\mathcal R}(q,\delta y)} \right. \label{eq:Rtilde} \\  &  + \left. y_\gamma\, \left.\dfrac{\partial \left({\mathcal R}^{-1}\right)^\gamma}{\partial q^i} \right|_{{\mathcal R}(q,\delta y)} , -\delta q^j\, \left.\dfrac{\partial \left({\mathcal R}^{-1}\right)^j}{\partial u^\gamma} \right|_{{\mathcal R}(q,\delta y)}+   y_{\gamma'}\, \left.\dfrac{\partial \left({\mathcal R}^{-1}\right)^{\gamma'}}{\partial u^\gamma} \right|_{{\mathcal R}(q,\delta y)}
\right) \, .  \nonumber  \end{align}
From Equation~\eqref{eq:Rtilde} and $\left.\dfrac{\partial ({\mathcal R}^{-1})^i}{\partial q^j} \right|_{(q,0)}=\delta^i_j$,
we obtain that

\begin{align*}
    \left( T_{(q^i,y_\gamma;0,0)}\widetilde{\mathcal R}\right) & \left( \left. \dfrac{\partial}{\partial ( \delta q^i)}\right|_{(q^i,y_\gamma,0,0)}\right) \\ -&\left[(T_{(q^i,y_\gamma)}\tilde{\epsilon})\circ T_{(q^i,0;0,y_\gamma)}\tilde{\alpha}\circ T_{(q^i,y_\gamma,0,0)}\widetilde{\mathcal R}\right] \left( \left.\dfrac{\partial}{\partial  ( \delta q^i)} \right|_{(q^i,y_\gamma,0,0)}\right) \\ =&-\left.\dfrac{\partial}{\partial p_i}\right|_{(q^i,0;0,y_\gamma)}- \left.\dfrac{\partial ({\mathcal R}^{-1})^i}{\partial u^\gamma}\right|_{(q,0)}   \left.\dfrac{\partial}{\partial p_\gamma}\right|_{(q^i,0;0,y_\gamma)}\\&-\rho^i_\gamma(q) \left.\dfrac{\partial}{\partial p_\gamma}\right|_{(q^i,0;0,y_\gamma)}+ \left.\dfrac{\partial ({\mathcal R}^{-1})^i}{\partial u^\gamma}\right|_{(q,0)}   \left.\dfrac{\partial}{\partial p_\gamma}\right|_{(q^i,0;0,y_\gamma)}\\&=\Psi\left(\left.{\rm d}q^i\right|_{(q^i,y_\gamma)}\right)  .\end{align*}
Similarly, from 
$$ \left. \dfrac{\partial {\mathcal R}^\gamma}{\partial y^\mu}\right|_{(q,0)}=\delta^\gamma_\mu, \quad  \left. \dfrac{\partial^2 \left({\mathcal R}^{-1}\right)^\gamma }{\partial q^i\partial q^j}\right|_{(q,0)}=\left. \dfrac{\partial^2 \left({\mathcal R}^{-1}\right)^\gamma }{\partial u^\mu \partial q^i}\right|_{(q,0)}=0,
$$
it is obtained that
    \begin{align*}
\left( T_{(q^i,y_\gamma;0,0)} \widetilde{\mathcal R}\right) & \left( \left. \dfrac{\partial}{\partial ( \delta y_\mu)}\right|_{(q^i,y_\gamma,0,0)}\right) =  \left. \dfrac{\partial {\mathcal R}^i}{\partial y^\mu}\right|_{(q,0)}    \left.\dfrac{\partial}{\partial q^i}\right|_{(q^i,0;0,y_\gamma)}\\ + &   \left.\dfrac{\partial}{\partial u^\mu}\right|_{(q^i,0;0,y_\gamma)}+y_{\gamma'} \, \left. \dfrac{\partial^2 \left({\mathcal R}^{-1}\right)^{\gamma'} }{\partial u^\gamma\partial u^\mu}\right|_{(q,0)} \left.\dfrac{\partial}{\partial p_\gamma}\right|_{(q^i,0;0,y_\gamma)}.\end{align*}  

On the other hand, from~\eqref{eq:alpha},~\eqref{eq:epsilon} and 
$${\mathcal R}^\gamma_\mu(q,0)=\delta^\gamma_\mu, \quad \left. \dfrac{\partial {\mathcal R}^{\gamma'}_\gamma}{\partial q^i}\right|_{(q,0)}=0,$$
see~\cite{2015JCDavidLocalDiscrete}, as well as
$$\left. \dfrac{\partial {\mathcal R}^\gamma_{\gamma'}}{\partial \bar{u}^\mu}\right|_{(q,\bar{u})}= \left. \dfrac{\partial^2 p^\gamma}{\partial u^{\gamma'} \partial \bar{u}^\mu}\right|_{(q,0,\bar{u})},$$
it follows that
\begin{align*} T_{(q^i,y_\gamma;0,0)}&\left(\tilde{\epsilon}\circ \tilde{\alpha}\circ \tilde{\mathcal R}\right) \left( \left.\dfrac{\partial}{\partial (\delta y_\mu)} \right|_{(q^i,y_\gamma;0,0)} \right)=\left. \dfrac{\partial {\mathcal R}^{i}}{\partial y^\mu}\right|_{(q,0)} \left.\dfrac{\partial}{\partial q^i}\right|_{(q^i,0;0,y_\gamma)} \\ - &\left.\dfrac{\partial^2 p^\gamma}{\partial u^{\gamma'} \partial \bar{u}^\mu}\right|_{(q,0)}\, y_\gamma\,  \left.\dfrac{\partial}{\partial p_{\gamma'} }\right|_{(q^i,0;0,y_\gamma)}-y_{\gamma'}\, \left. \dfrac{\partial^2 \left({\mathcal R}^{-1}\right)^{\gamma'} }{\partial u^\mu \partial u^\gamma}\right|_{(q,0)}\ \left.\dfrac{\partial}{\partial p_{\gamma} }\right|_{(q^i,0;0,y_\gamma)}.\end{align*}

Thus, we conclude that
\begin{align*}\left( T_{(q^i,y_\gamma;0,0)} \widetilde{\mathcal R}\right) & \left( \left. \dfrac{\partial}{\partial ( \delta y_\mu)}\right|_{(q^i,y_\gamma,0,0)}\right)- T_{(q^i,y_\gamma;0,0)}\left(\tilde{\epsilon}\circ \tilde{\alpha}\circ \tilde{\mathcal R}\right)  \left( \left.\dfrac{\partial}{\partial (\delta y_\mu)} \right|_{(q^i,y_\gamma;0,0)} \right)\\=&\Psi\left(\left.{\rm d}y_\mu\right|_{(q^j,y_\gamma)}\right)  \, ,
\end{align*}
what ends the proof.
\end{proof}

\begin{example}\label{Ex:Cotangent} 
A discretization map ${\mathcal R}\colon TQ \rightarrow Q\times Q$ and the generalized discretization map $\tau\colon \mathfrak{g}\rightarrow G$ in Example~\ref{Ex:TangentLieGroup} define the generalized discretization map $(\tau,{\mathcal R})\colon \mathfrak{g}\times TQ \rightarrow G\times (Q\times Q)$ on the Atiyah groupoid $(Q\times Q)\times G\rightrightarrows Q$. Then, $\widetilde{(\tau,{\mathcal R})}=(\widetilde{\tau},\widetilde{\mathcal R})\colon T^*\mathfrak{g}^*\times T^*T^*Q\rightarrow T^*G\times T^*(Q\times Q)$ is the cotangent lift of $(\tau,{\mathcal R})$ (see Section~\ref{Sec:PrincipalBundle}).

\end{example}

\section{Derivation of geometric integrators for reduced systems} \label{Sec:Integrator}

Given a symplectic groupoid  $G\rightrightarrows M$ (see Appendix~\ref{App:SymplGroupoid}) we know that there exists a unique Poisson structure $\Pi$ on $M$ for which $\beta\ \colon\  G\to M$ is a Poisson map, and $\alpha\ \colon\  G\to M$ is an anti-Poisson map (that is, $\alpha$ is a Poisson map when $M$ is equipped with the Poisson structure $-\Pi$).
In particular when we are dealing with the symplectic groupoid $T^*G$, where
  $G$ is a Lie groupoid, the Poisson structure on $A^*G$ is the
  (linear) Poisson structure of the dual of a Lie algebroid (see Remark~\ref{remark:AppB} in Appendix~\ref{App:SymplGroupoid}).

On the other hand, recall that a submanifold $N$ of a Poisson manifold $(M,\Pi)$ is said to be {\bf coisotropic} if the bracket of two smooth functions, defined on an open subset of $M$ and which vanish on $N$, vanishes on $N$ too. 
This allows to characterize a Poisson map since we know that the graph of $\varphi: M\rightarrow M$ is coisotropic in $M^-\times M$ if and only if $\varphi$ is a Poisson map \cite{weipoisson}. Here $M^-$ is the same manifold $M$ equipped with the Poisson structure $-\Pi$.

Our main objective is to obtain a new construction of geometric integrators while preserving the Lie-Poisson structure (a subject treated with different points of view, for instance,  in \cite{Zhong,channell,Modin,Ferraro-vaquero}). 
The next theorem is the core of our construction of Poisson integrators. In the statement of the theorem, we use bisections in a Lie groupoid that are defined in Appendix~\ref{App:lieGroupoid}.  
\begin{theorem}\label{PoissonIsomorphisms}[see \cite{coste}]\label{CDW} Let $G\stackrel[{\beta}]{{\alpha}}{\rightrightarrows} M$ be a symplectic groupoid. Let ${\mathcal L}
$ be a
  Lagrangian submanifold of $G$ such that $\alpha_{|{\mathcal L}}$ is a (local)
  diffeomorphism.  Then
\begin{enumerate}
\item $\beta_{|{\mathcal L}}:\lag\rightarrow M$ is a (local) diffeomorphism as well (so, ${\mathcal L}$ is a Lagrangian bisection).
\item The mapping
  $\hat{\mathcal L}=\alpha\circ(\beta_{|\mathcal L})^{-1}:M\rightarrow M$ and its
  inverse, $\beta\circ (\alpha_{|\mathcal L})^{-1}$,
  are (local) Poisson isomorphisms or, equivalently,
the image of ${\mathcal L}$ by the map 
 \[
\begin{array}{rrcl}
\alpha\times \beta:& G&\longrightarrow& M^-\times M\\
&g&\longmapsto&({\alpha}(g), {\beta}(g))
\end{array}
\] 
is a coisotropic submanifold of $M^-\times M$. 
\end{enumerate}
\end{theorem}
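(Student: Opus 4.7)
For part (1), the plan is a dimension count combined with symplectic duality. Since $G$ is symplectic with $\dim G=2\dim M$, any Lagrangian submanifold has $\dim\mathcal{L}=\dim M$, which already matches the dimension of the target of $\beta|_{\mathcal L}$. I would invoke the fundamental structural property of a symplectic groupoid, namely that the $\alpha$- and $\beta$-fibers are $\omega$-orthogonal, that is, $(\ker T_g\alpha)^\omega=\ker T_g\beta$ at every $g\in G$. The hypothesis that $\alpha|_{\mathcal L}$ is a local diffeomorphism rewrites as $T_g\mathcal{L}\oplus\ker T_g\alpha=T_g G$ for each $g\in\mathcal{L}$. Taking $\omega$-orthogonals of both sides and using the Lagrangian identity $T_g\mathcal{L}=(T_g\mathcal{L})^\omega$, I obtain $T_g\mathcal{L}\cap\ker T_g\beta=0$, so $\beta|_{\mathcal L}$ is an immersion, and by the matching of dimensions a local diffeomorphism. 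This also immediately gives $\mathcal{L}$ the structure of a Lagrangian bisection.

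For part (2), once $\beta|_{\mathcal L}$ is known to be a local diffeomorphism, both $\hat{\mathcal L}=\alpha\circ(\beta|_{\mathcal L})^{-1}$ and $\beta\circ(\alpha|_{\mathcal L})^{-1}$ are well-defined mutually inverse local self-maps of $M$. Since the characterization of a Poisson map via coisotropicity of its graph in the product of source and target (with sign-reversed Poisson structure on the source) is standard, it suffices to prove that $\Phi(\mathcal{L})\subset M^-\times M$ is coisotropic, where $\Phi=\alpha\times\beta$. One checks directly that $\Phi(\mathcal{L})$ coincides with the graph $\{(\hat{\mathcal L}(x),x)\mid x\in M\}$.

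The coisotropicity of $\Phi(\mathcal{L})$ would be established by chaining four observations. First, $\Phi:G\to M^-\times M$ is itself a Poisson map: $\alpha$ is anti-Poisson, $\beta$ is Poisson, and the identity $\{f\circ\alpha,g\circ\beta\}_G=0$ for every $f,g\in C^\infty(M)$, a direct consequence of the $\omega$-orthogonality of $\alpha$- and $\beta$-fibers, ensures compatibility with the product Poisson structure on $M^-\times M$. Second, any Poisson map $\Phi$ satisfies the bivector relation $T\Phi\circ \Pi_G^\sharp\circ \Phi^*=\Pi_{M^-\times M}^\sharp$. Third, for any covector $\xi$ at $m=\Phi(\ell)$ annihilating $T_m\Phi(\mathcal{L})$, its pullback $\Phi^*\xi$ annihilates $T_\ell\mathcal{L}$. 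Fourth, the Lagrangian condition rephrases as $\omega^\sharp((T_\ell\mathcal{L})^\circ)=T_\ell\mathcal{L}$; hence $\Pi_G^\sharp(\Phi^*\xi)=\omega^\sharp(\Phi^*\xi)\in T_\ell\mathcal{L}$, and pushing forward by $T\Phi$ delivers $\Pi_{M^-\times M}^\sharp(\xi)\in T_m\Phi(\mathcal{L})$, which is exactly the coisotropic condition.

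The hardest point, I expect, is not the algebra of these identities but making sure the sign and orientation conventions align: anti-Poisson for $\alpha$ versus Poisson for $\beta$, the orthogonality $(\ker T\alpha)^\omega=\ker T\beta$, and the sign flip built into $M^-\times M$. Once those are fixed by invoking the standard symplectic-groupoid facts recalled in Appendix~\ref{App:SymplGroupoid} and in \cite{coste}, the four-step chain above is essentially a one-line computation.
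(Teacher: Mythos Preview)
The paper does not prove this theorem; it is stated with the attribution ``[see \cite{coste}]'' and used as a black box. Your argument is correct and is essentially the standard proof one finds in the literature: for part~(1) the symplectic orthogonality $(\ker T_g\alpha)^\omega=\ker T_g\beta$ (recorded in the paper as Theorem~\ref{theorem-sg}(1)) combined with the Lagrangian condition and a dimension count is exactly the right mechanism, and for part~(2) the chain ``$\alpha\times\beta$ is Poisson into $M^-\times M$, Poisson maps send Lagrangians to coisotropics'' is the canonical route. Your caveat about sign conventions is well placed but not a gap; the identities you invoke are precisely those collected in Appendix~\ref{App:SymplGroupoid}.
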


Theorem~\ref{PoissonIsomorphisms} contains the crucial properties of the cotangent Lie groupoid  $T^*G\stackrel[\tilde{\beta}]{\tilde{\alpha}}{\rightrightarrows} A^*G$ to derive Poisson integrators.

\subsection{Linear-Poisson integrators for Hamiltonian and Lagrangian systems}

Given a  Hamiltonian function  $H: A^*G\rightarrow {\mathbb R}$ (see \cite{weinstein96,LeMaMa}) 
we derive its associated  Lagrangian submanifold 
\[
dH(A^*G)=\hbox{Im}\, dH\subset T^*A^*G\; .
\]

If ${\mathcal R}: AG\rightarrow G$ is a generalized discretization map, then the corresponding cotangent lift
$T^*\mathcal R: T^*AG\rightarrow T^*G$ is a  symplectomorphism by construction 
and, from Theorem \ref{thm:CotangentLift}, $\widetilde{\mathcal R}= T^*{\mathcal R}\circ {\mathcal I}_{AG}\colon T^*(A^*G)\rightarrow T^*G$ is also a generalized discretization map. 

Therefore, from the definition of a discretization map, the zero section $$0_{T^*(A^*G)}\colon A^*G \rightarrow T^*(A^*G)$$ is sent to the image of the identity section $\tilde{\epsilon}\colon A^*G \rightarrow T^*G$ of the Lie groupoid $T^*G\rightrightarrows A^*G$. Moreover, the set $\tilde{\epsilon}(A^*G)$ is a horizontal Lagrangian submanifold of $(T^*G, \omega_G)$ (see Theorem B.6 in Appendix~\ref{App:SymplGroupoid}).

Denote by $h\, dH(A^*G)$ the multiplication by $h$ of the fiber elements with respect to the vector bundle structure $\pi_{A^*G}: T^*A^*G\rightarrow A^*G$. 
Locally, if 
$$dH(A^*G)=\left\{\left(q^i, y_{\gamma}; \frac{\partial H}{\partial q^i}, \frac{\partial H}{\partial y_{\gamma}}\right)\right\}\, ,$$ then 
$$h\, dH(A^*G)=\left\{\left(q^i, y_{\gamma}; h\frac{\partial H}{\partial q^i}, h\frac{\partial H}{\partial y_{\gamma}}\right)\right\}\; .$$

\begin{theorem}\label{Propo-Lagbi}
If $H: A^*G\rightarrow {\mathbb R}$ is a Hamiltonian function, then  $\widetilde{\mathcal R}(h\, dH(A^*G))$ is a Lagrangian bisection of the symplectic groupoid $(T^*G,\omega_G)\rightrightarrows A^*G$, for small enough $h$.  
\end{theorem}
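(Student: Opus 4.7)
The plan is to verify in turn the two defining properties of a Lagrangian bisection -- Lagrangianity and the source/target diffeomorphism condition -- using that at $h=0$ the image coincides with the identity bisection $\tilde{\epsilon}(A^*G)$ and then invoking continuity in $h$.

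\emph{Lagrangianity.} First I would observe that $\widetilde{\mathcal R}$ is an antisymplectomorphism from an open neighborhood of the zero section of $T^*(A^*G)$ onto an open neighborhood of $\tilde{\epsilon}(A^*G)$ in $T^*G$. This combines two standard facts: the cotangent lift $T^*{\mathcal R}$ is a (local) symplectomorphism because, by Theorem \ref{thm:diffeo}, ${\mathcal R}$ is a local diffeomorphism from a neighborhood of the zero section of $AG$ onto a neighborhood of $\epsilon(Q)$ in $G$; and $\mathcal{I}_{AG}$ satisfies $\mathcal{I}_{AG}^{*}\omega_{AG}=-\omega_{A^*G}$ by construction, as recalled just before Theorem \ref{thm:CotangentLift}. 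Hence $\widetilde{\mathcal R}^{*}\omega_{G}=-\omega_{A^*G}$. Now $h\,dH(A^*G)=d(hH)(A^*G)$ is the graph of an exact one-form on $A^*G$, hence Lagrangian in $(T^*(A^*G),\omega_{A^*G})$, and its image under the antisymplectomorphism $\widetilde{\mathcal R}$ is Lagrangian in $(T^*G,\omega_{G})$.

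\emph{Bisection.} Here I would exploit smooth dependence on $h$. When $h=0$ we have $h\,dH(A^*G)=0_{T^*(A^*G)}(A^*G)$, and the first property in Definition \ref{Def:newdiscretization} applied to the generalized discretization map $\widetilde{\mathcal R}$ produced by Theorem \ref{thm:CotangentLift} gives $\widetilde{\mathcal R}(0_{T^*(A^*G)}(A^*G))=\tilde{\epsilon}(A^*G)$. Since $\tilde{\alpha}\circ\tilde{\epsilon}=\tilde{\beta}\circ\tilde{\epsilon}=\id_{A^*G}$, both $\tilde{\alpha}$ and $\tilde{\beta}$ restrict to diffeomorphisms on $\tilde{\epsilon}(A^*G)$, so the identity section is itself a Lagrangian bisection. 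The two maps $A^*G\to A^*G$ given by $\tilde{\alpha}\circ\widetilde{\mathcal R}\circ d(hH)$ and $\tilde{\beta}\circ\widetilde{\mathcal R}\circ d(hH)$ depend smoothly on $h$ and reduce to $\id_{A^*G}$ at $h=0$; by the inverse function theorem, they remain local diffeomorphisms for $h$ in a neighborhood of $0$. Consequently $\widetilde{\mathcal R}(h\,dH(A^*G))$ is a (local) Lagrangian bisection of $T^*G\rightrightarrows A^*G$ for small enough $h$.

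\emph{Main obstacle.} The delicate point is quantifying ``small enough $h$'' and making the local statement uniform. The pointwise inverse function theorem argument is immediate from the reduction to the identity at $h=0$, but a uniform bound on $h$ requires either restricting to a relatively compact open subset of $A^*G$ or invoking compactness of relevant pieces of orbits. One can avoid any explicit quantification by relying on the local expressions of $\tilde{\alpha}$, $\tilde{\beta}$, $\Psi$ and $\widetilde{\mathcal R}$ computed in the proof of Theorem \ref{thm:CotangentLift}: the Jacobians of $\tilde{\alpha}\circ\widetilde{\mathcal R}$ and $\tilde{\beta}\circ\widetilde{\mathcal R}$ along the zero section reduce to the identity block, which confirms openness of the bisection property in $h$ and closes the argument.
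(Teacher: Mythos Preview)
Your proposal is correct and follows essentially the same approach as the paper: both argue that $h\,dH(A^*G)$ is Lagrangian and close to the zero section, that $\widetilde{\mathcal R}$ carries it (anti)symplectically to a Lagrangian submanifold close to the identity bisection $\tilde{\epsilon}(A^*G)$, and that closeness to $\tilde{\epsilon}(A^*G)$ forces the bisection property for small $h$. The paper's proof is considerably terser --- it simply invokes that a horizontal Lagrangian submanifold close to $\tilde{\epsilon}(A^*G)$ is a Lagrangian bisection (citing a result of Weinstein) --- whereas you spell out the antisymplectomorphism explicitly and replace the ``closeness'' argument by a direct application of the inverse function theorem to $\tilde{\alpha}\circ\widetilde{\mathcal R}\circ d(hH)$ and $\tilde{\beta}\circ\widetilde{\mathcal R}\circ d(hH)$; your discussion of the uniformity issue in $h$ is also a point the paper leaves implicit.
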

\begin{proof}
For enough small $h$ 
\[
h\, dH(A^*G)\subset T^*A^*G\;
\]
is a horizontal Lagrangian submanifold of $(T^*A^*G, \omega_{A^*G})$ since it is close to the zero section (see Proposition 2.1 in \cite{Weinstein-annals}).  Therefore, $\widetilde{\mathcal R}(h\, dH(A^*G))$ is also a horizontal Lagrangian submanifold since it is close to  $\tilde{\epsilon}(A^*G)$. As a consequence $\widetilde{\mathcal R}(h\, dH(A^*G))$ is a Lagrangian bisection.
\end{proof}

Alternatively, let $L: AG\rightarrow {\mathbb R}$  be a regular Lagrangian function. This means that the Legendre transformation ${\mathcal F}L\colon AG \rightarrow A^*G$ given by 
$$\langle {\mathcal F}L(a_q) ,b_q \rangle =\left.\dfrac{\rm d}{{\rm d}t}\right|_{t=0} L(a_q+tb_q), \quad \mbox{for }  a_q,\;b_q\in A_qG, $$
is a local diffeomorphism. So, we can consider the Lagrangian energy $E_L\colon AG\rightarrow \mathbb{R}$ defined by 
$$E_L=\Delta(L)-L,$$
where $\Delta$ is the Liouville vector field on $AG$, and the Hamiltonian function $H\colon A^*G \rightarrow \mathbb{R}$ given by $$H=E_L\circ {\mathcal F}L^{-1},$$
see~\cite{LeMaMa}. Then, using Theorem \ref{Propo-Lagbi}, we have that $\widetilde{\mathcal R}(h\, dH(A^*G))$ is a Lagrangian bisection of $T^*G$ for $h$ small enough. 

On the other hand, one may prove that the following diagram
\begin{equation*}
    \xymatrix{ T^*(AG) \ar[rr]^{T^*{\mathcal R}}  && T^*G && T^*(A^*G)\ar[ll]_{\widetilde{\mathcal R} }\ar@/_2pc/[llll]^{{\mathcal I}_{AG}} \\ & AG \ar[ul]^{dL} \ar[rd]_{E_L} \ar[rr]^{{\mathcal F}L} &&A^*G  \ar[ur]_{dH} \ar[ld]^{H}& \\ && \mathbb{R} && }
\end{equation*} 
is commutative and 
\begin{equation}\label{eq:LPintegratordHdL}
\widetilde{\mathcal R}(dH({\mathcal F}L(a_q))=T^*{\mathcal R}(dL(a_q))\, , \; \forall a_q\in A_qG,\;  q\in Q\; .
\end{equation}
see~\cite{2006Grabowska}.

Now, denote by $h\, dL(AG)$ the multiplication by $h$ of the fiber elements with respect to the vector bundle structure $\zeta_{A^*G}: T^*AG\rightarrow A^*G$. 
Locally, if 
$$dL(AG)=\left\{\left(q^i, y^{\gamma}; \frac{\partial L}{\partial q^i}, \frac{\partial L}{\partial y^{\gamma}} \right)\right\}\, ,$$ then 
$$h\, dL(AG)=\left\{\left(q^i, hy^{\gamma}; h\frac{\partial L}{\partial q^i}, \frac{\partial L}{\partial y^{\gamma}}\right)\right\}\; .$$

From~\eqref{eq:LPintegratordHdL}, it follows that 
$$T^*{\mathcal R}(h\, dL(a_q))=\widetilde{\mathcal R}(h\,dH({\mathcal F}L(a_q))), \quad \forall \; a_q\in A_qG.$$
In particular, 
$$T^*{\mathcal R}(h\, dL(AG))=\widetilde{\mathcal R}(h\,dH(A^*G))\, .$$
Therefore, using Theorem~\ref{Propo-Lagbi} we obtain the following result.
\begin{theorem}\label{Propo-Lagbi2}
If $L: AG\rightarrow {\mathbb R}$ is a regular Lagrangian, then  $T^*{\mathcal R}(h\, dL(AG))$ is a Lagrangian bisection for $h$ enough small.
\end{theorem}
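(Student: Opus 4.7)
The plan is to reduce this result directly to Theorem~\ref{Propo-Lagbi} via the commutative diagram and the relation $\widetilde{\mathcal R} = T^*{\mathcal R}\circ {\mathcal I}_{AG}$ already highlighted in the preceding discussion.

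First, I would establish the pointwise/global identity
\[
T^*{\mathcal R}(h\, dL(AG)) = \widetilde{\mathcal R}(h\, dH(A^*G)),
\]
for any $h>0$ (not just $h=1$ as in~\eqref{eq:LPintegratordHdL}). Since $\widetilde{\mathcal R} = T^*{\mathcal R}\circ {\mathcal I}_{AG}$, this amounts to verifying
\[
{\mathcal I}_{AG}(h\, dH(A^*G)) = h\, dL(AG).
\]
This is a routine check in the local fibered coordinates introduced in Section~\ref{Sec:Local}: using the explicit formula~\eqref{Eq:IA} for ${\mathcal I}_{AG}$ together with the Legendre relations $y_\gamma = \partial L/\partial y^\gamma$, $y^\gamma = \partial H/\partial y_\gamma$ and $\partial L/\partial q^i = -\partial H/\partial q^i$, one sees that the fiber scaling by $h$ with respect to $\pi_{A^*G}\colon T^*A^*G\to A^*G$ translates under ${\mathcal I}_{AG}$ into the fiber scaling by $h$ with respect to $\zeta_{A^*G}\colon T^*AG\to A^*G$. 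Concretely, a point $(q^i,y_\gamma;h\partial H/\partial q^i,h\partial H/\partial y_\gamma)$ of $h\, dH(A^*G)$ is sent by ${\mathcal I}_{AG}$ to $(q^i,hy^\gamma;h\partial L/\partial q^i,\partial L/\partial y^\gamma)$, which is precisely a point of $h\, dL(AG)$.

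Once this equality of submanifolds is established, the conclusion is immediate: by Theorem~\ref{Propo-Lagbi}, the image $\widetilde{\mathcal R}(h\, dH(A^*G))$ is a Lagrangian bisection of the symplectic groupoid $(T^*G,\omega_G)\rightrightarrows A^*G$ for $h$ small enough, and therefore so is $T^*{\mathcal R}(h\, dL(AG))$.

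The only point that requires care, and the main obstacle, is the bookkeeping of the two different fiber multiplications: the scaling in $h\, dL(AG)$ is with respect to $\zeta_{A^*G}$ (which moves the base point from $a_q$ to $h a_q$ in $AG$), while in $h\, dH(A^*G)$ the scaling is with respect to $\pi_{A^*G}$ (which keeps the base point $\mu_q\in A^*G$ fixed). The map ${\mathcal I}_{AG}$ is precisely what intertwines these two scalings, and the verification above confirms this is consistent with the Legendre transformation ${\mathcal F}L$. No smallness of $h$ is needed for this intertwining; smallness is only used to invoke Theorem~\ref{Propo-Lagbi}.
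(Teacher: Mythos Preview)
Your proposal is correct and follows essentially the same route as the paper: establish the equality $T^*{\mathcal R}(h\, dL(AG)) = \widetilde{\mathcal R}(h\, dH(A^*G))$ via the Legendre transform and the relation $\widetilde{\mathcal R} = T^*{\mathcal R}\circ {\mathcal I}_{AG}$, then invoke Theorem~\ref{Propo-Lagbi}. Your explicit coordinate verification that ${\mathcal I}_{AG}$ intertwines the two $h$-scalings is slightly more detailed than the paper's argument, which simply asserts that the scaled identity follows from the unscaled relation~\eqref{eq:LPintegratordHdL}, but the substance is the same.
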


Using Theorem~\ref{PoissonIsomorphisms}, \ref{Propo-Lagbi} and \ref{Propo-Lagbi2} we derive the following construction of Poisson
geometric integrators via discretization maps.
\begin{table}[h!]{{\bf Integrators for  a Hamiltonian system on $A^*G$ 
}}
  \centering
  \begingroup
  \ttfamily\bfseries
  \renewcommand{\arraystretch}{1.2}
  \begin{tabular}{|l|}
  \hline
     input: ${H}: A^*G\rightarrow {\mathbb R}$, $\mu_0\in A^*G$ and $h>0$ \\
     for \(k = 1\) to \(N-1\)\\
    \ \ solve:  $\gamma_k=\widetilde{\alpha}^{-1}(\mu_{k-1})\cap \widetilde{\mathcal R}(h\, dH(A^*G))\subset T^*G$ \\
     \ \ define: $\mu_{k+1}=\tilde{\beta}(\gamma_k)$\\
     end for\\
     output: \(\mu_{N}\in A^*G\)\\
     \hline
  \end{tabular}
\endgroup
\end{table}
\begin{table}[h!]{{\bf Integrators for  a Lagrangian system on $AG$ 
}}
  \centering
  \begingroup
  \ttfamily\bfseries
  \renewcommand{\arraystretch}{1.2}
  \begin{tabular}{|l|}
  \hline
     input: ${L}: AG\rightarrow {\mathbb R}$, $v_0\in AG$ and $h>0$\ \ \ \ $\,$\\
     define $\mu_0={\mathcal F}L (v_0)\in A^*G $\\
     for \(k = 1\) to \(N\)\\
\ \    solve:  $\gamma_k=\widetilde{\alpha}^{-1}(\mu_{k-1})\cap T^*{\mathcal R}(h\, dL(AG))\subset T^*G$ \\
 \ \    define: $\mu_{k}=\tilde{\beta}(\gamma_k)$\\
     end for\\
      solve: $v_N$ such that $\mu_N={\mathcal F}L (v_N)$\\
     output: \(v_{N}\in AG\)\\
     \hline
  \end{tabular}
\endgroup
\end{table}

\subsection{Lie-Poisson integrators on Lie groups}

Let $G$ be a Lie group and ${\mathfrak g}$ the associated Lie algebra. Consider a retraction map $\tau: {\mathfrak g}\rightarrow G$ that is also a discretization map on the Lie group. 
For a regular Lagrangian function $L: {\mathfrak g}\rightarrow {\mathbb R}$ the geometric integrator preserving the Lie-Poisson structure on ${\mathfrak g}^*$ is
\begin{align*}
\mu_k&=(T_e (\tau^{-1}\circ r_{\tau(h\xi)}))^*\frac{\partial L}{\partial \xi}(\xi)\, ,\\
\mu_{k+1}&=(T_e (\tau^{-1}\circ l_{\tau(h\xi)}))^*\frac{\partial L}{\partial \xi}(\xi)\, ,
\end{align*}
or equivalently,
\begin{align*}
\mu_k&=(d^R\tau_{h\xi}^{-1})^*\frac{\partial L}{\partial \xi}(\xi) \, ,\\
\mu_{k+1}&=(d^L\tau_{h\xi}^{-1})^*\frac{\partial L}{\partial \xi}(\xi)=Ad^*_{\tau(h\xi)}\mu_k  \, ,
\end{align*}
using right and left trivializations \cite{bourabee04} where
\[
T_{\xi}\tau(\eta)= T_e r_{\tau(\xi)} (d^R\tau_{\xi}(\tau))
= T_e l_{\tau(\xi)} (d^L\tau_{\xi}(\tau))\, .
\]
We obtain the corresponding geometric integrator in ${\mathfrak g}$ defining $v_k=({\mathcal F}L)^{-1} (\mu_k), k=0, \ldots, N$.

In the Hamiltonian framework for $H: {\mathfrak g}^*\rightarrow {\mathbb R}$,  we obtain
\begin{align*}
\mu_k&=(d^R\tau_{h\partial H/\partial \mu}^{-1})^*\mu  \, ,\\
\mu_{k+1}&=Ad^*_{\tau(h{\partial H/\partial \mu})}\mu_k  \, .
\end{align*}

\begin{example} \textbf{(Rigid body)}\label{example-rigidbody}
 The configuration space for the rigid body  is the Lie group $G=SO(3)$. As usual,  we identify the Lie algebra $\mathfrak{so}(3)$ with $\mathbb{R}^3$ using the isomorphism $\widehat{\cdot}\ \colon\ \mathbb{R}^3 \to \mathfrak{so}(3)$ given by
\[
\widehat{\omega}=\begin{pmatrix}
  0&-z&y\\
z&0&-x\\
-y&x&0
\end{pmatrix} \, .
\]
The angular velocity in body coordinates is denoted by $\Omega$, and $\hat\Omega=R ^{-1}\dot R$.
The moment of inertia tensor in body coordinates is $\mathbb{I}=\operatorname{diag}(I_1,I_2,I_3)$.
Also, the body angular momentum is $\Pi=\mathbb{I}\Omega$, so in principal axes, $\Pi=(\Pi_1,\Pi_2,\Pi_3)=(I_1\Omega_1,I_2\Omega_2,I_3\Omega_3)$. The Hamiltonian in these variables is
\[
H= \frac{1}{2}\left( \frac{\Pi_1^2}{I_1}+\frac{\Pi_2^2}{I_2}+\frac{\Pi_3^2}{I_3}\right),
\]
which we regard as a function $H\colon \mathfrak{so}(3)^* \to
\mathbb{R}$. Note that if $SO(3)$ is considered as a Lie groupoid, then the dual Lie algebroid $A^*G$ is precisely $\mathfrak{so}^*(3)$. We use as a retraction map the Cayley map $\operatorname{cay} \colon\mathfrak{so}(3) \to SO(3)$,
\[
\operatorname{cay}(\hat\omega)=I_3+\frac{4}{4+\|\omega\|^{2}}\left(\hat\omega+\frac{\hat\omega^{2}}{2}\right),
\]
where $I_{3}$ is the $3\times 3$ identity matrix. We have that
\begin{align*}
d^L\operatorname{cay}^{-1}_{\omega}&=
\left(1+\frac{1}{4}\|\omega\|^{2}\right)I_{3}-\frac{1}{2}\hat{\omega}+\frac{1}{4}\hat{\omega}^2\, ,\\
d^R\operatorname{cay}^{-1}_{\omega}&=
\left(1+\frac{1}{4}\|\omega\|^{2}\right)I_{3}+\frac{1}{2}\hat{\omega}+\frac{1}{4}\hat{\omega}^2\, ,
\end{align*}
where $\hat{\omega}\in \mathfrak{so}(3)$.
The corresponding equations for the Hamiltonian $H$ are
\[
\left(\begin{array}{c}
(\mu_k)_1\\
(\mu_k)_2\\
(\mu_k)_3
\end{array}
\right)
= \left( \begin {array}{c}  \left( \frac{x^2}{4}+1 \right) {{\Pi}_1}+
 \left(\frac{xy}{4}+ \frac{z}{2} \right) { {{\Pi}_2}}+ \left( \frac{xz}{4}-\frac{y}{2}
 \right) {{\Pi}_3}\\ \left( \frac{xy}{4}-\frac{z}{2} \right) 
 {{{\Pi}_1}}+ \left( \frac{y^2}{4}+1 \right) {{{\Pi}_2}}+ \left( \frac{yz}{4}+\frac{x}{2}
 \right)  {{\Pi}_3}\\ \left( \frac{xz}{4}+\frac{y}{2} \right) 
{{{\Pi}_1}}+ \left( \frac{yz}{4}-\frac{x}{2}
 \right) { {{\Pi}_2}}+ \left( \frac{z^2}{4}+1 \right) { {{\Pi}_3}}\end {array} \right),
\]
\[
\left(\begin{array}{c}
(\mu_{k+1})_1\\
(\mu_{k+1})_2\\
(\mu_{k+1})_3
\end{array}
\right)= \left( \begin {array}{c}  \left( \frac{{x}^{2}}{4}+1 \right) { {{\Pi}_1}}+
 \left( \frac{xy}{4}-\frac{z}{2} \right) {{\Pi}_2}+ \left( \frac{xz}{4}+\frac{y}{2}
 \right) { {{\Pi}_3}}\\ \left(\frac{xy}{4}+ \frac{z}{2} \right) {{\Pi}_1}+ \left( \frac{y^2}{4}+1 \right) {{\Pi}_2}+ \left( \frac{yz}{4}-\frac{x}{2} \right) {{\Pi}_3}\\ \left( \frac{xz}{4}-\frac{y}{2}
 \right) {{\Pi}_1}+ \left( \frac{yz}{4}+ \frac{x}{2} \right) {{\Pi}_2}+ \left( \frac{z^2}{4} +1\right) {{\Pi}_3}\end {array} \right),
\]
where $x=\tfrac{h{\Pi}_1}{I_1}$, $y=\tfrac{h{\Pi}_2}{I_2}$ and $z=\tfrac{h{\Pi}_3}{I_3}$. Thus, the Lie-Poisson integrator $
\varphi_h: {\mathfrak g}^*\rightarrow {\mathfrak g}^*$ is given by: 
\[
\varphi_h \left((\mu_k)_1,
(\mu_k)_2,
(\mu_k)_3)\right)=\left((\mu_{k+1})_1,
(\mu_{k+1})_2,
(\mu_{k+1})_3\right)\; .
\]
\end{example}
\subsection{Systems defined on a trivial principal bundle}\label{Sec:PrincipalBundle}

Let $G$ be a Lie group with Lie algebra ${\mathfrak g}$ and $Q$ a smooth manifold. Consider  the trivial left action of $G$ on the product manifold $G \times Q$, that is, $\Psi_g\colon G\times Q \rightarrow G\times Q$,
\[
\Psi_g(\bar{g}, q)=(g\bar{g}, q)\, .
\]
The space of orbits of the tangent lift of $\Psi_g$ of $G$ on $T(G \times Q)$ is just the Atiyah algebroid 
\[
T(G \times Q)/G \simeq {\mathfrak g} \times TQ \rightarrow  Q.
\]
In this particular case, a discretization map for this algebroid is given by a map from $
 {\mathfrak g} \times TQ$ to $
G\times (Q\times Q)$.
We need in this case to consider a discretization map on $G$ given by $\tau: {\mathfrak g}\rightarrow G$ and a discretization map ${\mathcal R}: TQ\rightarrow Q\times Q$ to obtain $\widetilde{\mathcal R}= (\tau, {\mathcal R})$, as in Example~\ref{Ex:Cotangent}.

For a Lagrangian function $L: {\mathfrak g} \times TQ\rightarrow {\mathbb R}$
the discrete equations of motion are:
\begin{align*}
\mu_k&=(d^R\tau_{h\xi}^{-1})^*\frac{\partial L}{\partial \xi}(\xi, {\mathcal R}^{-1}(q_k, q_{k+1}))\, ,\\
\mu_{k+1}&=(d^L\tau_{h\xi}^{-1})^*\frac{\partial L}{\partial \xi}(\xi, {\mathcal R}^{-1}(q_k, q_{k+1} ))=Ad^*_{\tau(h\xi)}\mu_k\, ,\\
&(q_k, -p_k;  q_{k+1}, p_{k+1})\in T^* {\mathcal R}(h \, dL_{\xi}(TQ))\, ,
\end{align*}
where $L_{\xi}: TQ\rightarrow {\mathbb R}$ is defined by 
$L_{\xi}(v_q)=L(\xi, v_q)$ for all $v_q\in T_qQ$.
The previous methods correspond to Poisson integrators for the  Lagrange-Poincar\'e equations (see \cite{CMR01a}).

\subsection{Poisson integrators for action Lie algebroids}\label{section7.4}

Another relevant case for mechanical systems with symmetry is when the Lie groupoid is the action Lie groupoid (see Example \ref{example-action}).
Given a discretization map $\tau: {\mathfrak h}\rightarrow H$ it is possible to check that the map
${\mathcal R}\colon Q \rtimes \mathfrak{h}\rightarrow Q \rtimes H$ defined by
${\mathcal R}(q, \xi)=(q, \tau(\xi))$ is a discretization map for the action Lie groupoid. 
Therefore, a regular Lagrangian function $L: Q \rtimes \mathfrak{h}\rightarrow {\mathbb R}$ defines
\[
dL(Q \rtimes \mathfrak{h})\subset T^*( Q \rtimes \mathfrak{h})
\]
as follows
\[
dL(Q \rtimes \mathfrak{h})=\left\{\left(q, \xi; \frac{\partial L}{\partial q}, \frac{\partial L}{\partial \xi}\right)\right\}\, .
\]
Thus,
\[
h\, dL(Q \rtimes \mathfrak{h})=\left\{\left(q, h\xi; h\frac{\partial L}{\partial q}, \frac{\partial L}{\partial \xi}\right)\right\}\, .
\]
On the other hand, we have the discretization map
\[
T^*{\mathcal R}(q, \xi; p_q, p_{\xi})=(q, \tau(\xi); p_q, (T_{\tau(\xi)}\tau^{-1})^* p_{\xi}) \,.
\]
As a result, 
\[
T^*{\mathcal R}(h\, dL(q,\xi))=\left(q, \tau(h\xi); h\frac{\partial L}{\partial q}, (T_{\tau(h\xi)}\tau^{-1})^* \frac{\partial L}{\partial \xi}\right) \, .
\]
Now, using the expressions of the structural maps of the action Lie groupoid in Appendix~\ref{App:action}, it follows that the right and left translations in this Lie groupoid are given by 
$$R_{(q,g)}(q',g')=(q',R_g(g'))=(q',g'g),$$
for $(q',g')\in \beta^{-1}(\alpha(q,g))$ and 
$$L_{(q,g)}(q',g')=(q,L_g(g'))=(q,gg'),$$
for $(q',g')\in \alpha^{-1}(\beta(q,g))$. Equations~\eqref{eq:ApB_source} and~\eqref{eq:ApB_target} in Appendix~\ref{App:SymplGroupoid} and the definition of the inversion map in the action Lie groupoid $Q\rtimes  H\rightrightarrows Q$ in Appendix~\ref{App:action} leads to the following source and target maps for the symplectic groupoid $T^*(Q \rtimes \mathfrak{h})\rightrightarrows Q\times \mathfrak{h}^*$:
\begin{align*}
\widetilde{\alpha}(q, g, p_q, p_g)&=(q, -J(p_q)+(T_eR_g)^*p_g)\, ,\\
\widetilde{\beta}(q, g, p_q, p_g)&=(qg, (T_eL_g)^*p_g)\, ,
\end{align*}
where $\langle J(p_q), \eta\rangle=\langle p_q, \eta_Q(q)\rangle$ is the momentum map and $\eta_Q$ is the infinitesimal generator corresponding  to $\eta\in {\mathfrak h}$.

Therefore applying our algorithm we derive the following geometric integrator:
\begin{align*}
\mu_k&=-J\left(h\, \frac{\partial L}{\partial q}(q_k, \xi)\right)+(d^R\tau^{-1}_{h\xi})^*\frac{\partial L}{\partial \xi}(q_k, \xi)\, ,\\
q_{k+1}&=q_k \tau(h\xi)\, ,\\
\mu_{k+1}&=(d^L\tau^{-1}_{h\xi})^*\frac{\partial L}{\partial \xi}(q_k, \xi)\, .
\end{align*}
In the Hamiltonian case, for a Hamiltonian function
$H:Q\times \mathfrak{h}^*\rightarrow {\mathbb R} $
the geometric integrator is
\begin{align}
\mu_k&=J\left( h\frac{\partial H}{\partial q}(q_k, \mu)\right)+(d^R\tau^{-1}_{h\frac{\partial H}{\partial \mu}})^*\mu\, ,\nonumber\\
q_{k+1}&=q_k \tau\left(h\frac{\partial H}{\partial \mu}\right)\, ,\label{equation-action}\\
\mu_{k+1}&=(d^L\tau^{-1}_{h\frac{\partial H}{\partial \mu}})^*\mu\, .\nonumber
\end{align}
From the first equation, given $(q_k, \mu_k)\in Q\times \mathfrak{h}^*$, we obtain $\mu\in \mathfrak{h}^*$. From the second and third equation we derive the value of $(q_{k+1}, \mu_{k+1})$ in $Q\times \mathfrak{h}^*$. 

The Lie algebroid structure on $Q\rtimes \mathfrak{h}\rightarrow Q$ induces a linear Poisson structure on the dual bundle $Q\times \mathfrak{h}^*\rightarrow Q$ (see Appendix~\ref{App:lieGroupoid}) given by 
\begin{align*}
\{q^i, q^j\}&=0\, , \\
\{\mu_i, q^j\}&=\rho^j_i\, ,\\
\{\mu_i, \mu_j\}&={\mathcal C}_{ij}^k \mu_k\, ,
\end{align*}
where  $(q^i, \mu_j)$ are coordinates on $Q\times \mathfrak{h}^*$ defined as follows. The coordinates on $Q$ are $(q^i)$. Now, if $\{e_i\}$ is a basis of the Lie algebra $\mathfrak{h}$, it induces a dual basis $\{e^i\}$ on $\mathfrak{h}^*$ with associated coordinates $(\mu_i)$. Thus, the coefficients on the Poisson bracket
are defined as follows
$
[e_i, e_j]={\mathcal C}_{ij}^ke_k
$
and 
$(e_i)_Q(q)=\rho^j_i(q)\frac{\partial}{\partial q^j}$.

The key point is that the previous geometric integrator (\ref{equation-action}) preserves by construction the Poisson bracket $\{\cdot,\cdot\}$ on $Q\times \mathfrak{h}^*$.
\begin{example}{(Heavy top)}\label{example:heavy}
A typical and interesting example defined on an action Lie algebroid is the heavy top. The system is defined by a Lagrangian $L: S^2\rtimes \mathfrak{so}(3) \rightarrow {\mathbb R}$ given by
\[
L(q, \xi)=\frac{1}{2}\xi\cdot {I}\xi-(mgd) q\cdot {\rm e}\, ,
\]
where $\xi \in \mathfrak{so}(3)$ represents the angular velocity, $q\in S^2$ is the direction opposite to the gravity and ${\rm e}=({\rm e_1}, {\rm e}_2, {\rm e}_3)$ is a unit vector on ${\mathbb R}^3$ in the direction from a fixed point to the center of mass of the heavy top. Moreover, $m$ represents the mass of the body, $g$ the gravitational acceleration and $d$ the distance from the fixed point to the center of mass. The inertia tensor $I$ of the body is assumed to be a diagonal matrix with diagonal elements $(I_1, I_2, I_3)$. The action of the special orthogonal group $SO(3)$ on $S^2$ is the standard one.

As in Example~\ref{example-rigidbody} we use a discretization map based on the Cayley map
\[
{\mathcal R}(q, \xi)=(q, {\rm cay}(\xi))\, .
\]
Therefore, the corresponding Poisson integrator is 
\begin{align*}
&\left(\begin{array}{c}
(\mu_k)_1\\
(\mu_k)_2\\
(\mu_k)_3
\end{array}
\right)
=
\left(
\begin{array}{c}
mgd\,((q_k)_2{\rm e}_3-(q_k)_3{\rm e}_2)\\
mgd\,((q_k)_3{\rm e}_1-(q_k)_1{\rm e}_3)\\
mgd\, ((q_k)_3{\rm e}_2-(q_k)_2{\rm e}_3)
\end{array}\right)
\\&+
\left( \begin {array}{c}  I_1\left( \frac{(h\xi_1)^2}{4}+1 \right) {{\xi}_1}+
 I_2\left(\frac{h^2\xi_1\xi_2}{4}+ \frac{h\xi_3}{2} \right) { {{\xi}_2}}+ I_3\left( \frac{h^2\xi_1\xi_3}{4}-\frac{h\xi_2}{2}
 \right) {{\xi}_3}\\ I_1\left( \frac{h^2\xi_1\xi_2}{4}-\frac{h\xi_3}{2} \right) 
 {{{\xi}_1}}+ I_2\left( \frac{(h\xi_2)^2}{4}+1 \right) {{{\xi}_2}}+ I_3\left( \frac{h^2\xi_2\xi_3}{4}+\frac{h\xi_1}{2}
 \right)  {{\xi}_3}\\ I_1\left( \frac{h^2\xi_1\xi_3}{4}+\frac{h\xi_2}{2} \right) 
{{{\xi}_1}}+ I_2\left( \frac{h^2\xi_2\xi_3}{4}-\frac{h\xi_1}{2}
 \right) { {{\xi}_2}}+ I_3\left( \frac{(h\xi_3)^2}{4}+1 \right) { {{\xi}_3}}\end {array} \right),\\
 &q_{k+1}=q_k{\rm cay}(h\xi)\, ,\\
&\left(\begin{array}{c}
(\mu_{k+1})_1\\
(\mu_{k+1})_2\\
(\mu_{k+1})_3
\end{array}
\right)
\\&=\left( \begin {array}{c}  I_1\left( \frac{(h\xi_1)^2}{4}+1 \right) {{\xi}_1}+
 I_2\left(\frac{h^2\xi_1\xi_2}{4}- \frac{h\xi_3}{2} \right) { {{\xi}_2}}+ I_3\left( \frac{h^2\xi_1\xi_3}{4}+\frac{h\xi_2}{2}
 \right) {{\xi}_3}\\ I_1\left( \frac{h^2\xi_1\xi_2}{4}+\frac{h\xi_3}{2} \right) 
 {{{\xi}_1}}+ I_2\left( \frac{(h\xi_2)^2}{4}+1 \right) {{{\xi}_2}}+ I_3\left( \frac{h^2\xi_2\xi_3}{4}-\frac{h\xi_1}{2}
 \right)  {{\xi}_3}\\ I_1\left( \frac{h^2\xi_1\xi_3}{4}-\frac{h\xi_2}{2} \right) 
{{{\xi}_1}}+ I_2\left( \frac{h^2\xi_2\xi_3}{4}+\frac{h\xi_1}{2}
 \right) { {{\xi}_2}}+ I_3\left( \frac{(h\xi_3)^2}{4}+1 \right) { {{\xi}_3}}\end {array} \right) \, ,
\end{align*}
where $q_k=((q_k)_1, (q_k)_2, (q_k)_3)\in S^2$ and
\[
\widehat{\xi}=\left(
\begin{array}{ccc}
0&-\xi_3&\xi_2\\
-\xi_2&0&-\xi_1\\
-\xi_2&\xi_1&0
\end{array}
\right)\in \mathfrak{so}(3)\, .
\]
    
\end{example}

    \subsection{Higher-order Poisson integrators  using composition of Lagrangian submanifolds }\label{Sec:Composition}
The idea of composing methods is important to derive higher order methods \cite{hairer}. In the cases presented in this paper, the composition is  interpreted as the composition of Lagrangian submanifolds of the cotangent groupoid.  

If ${\mathcal L}_1$ and ${\mathcal L}_2$ are Lagrangian submanifolds of the cotangent groupoid $(T^*G, \omega_G)$, then the composition 
\[
{\mathcal L}_2\circ {\mathcal L}_1\subset T^*G
\]
is a Lagrangian submanifold (under clean intersection conditions, see \cite{guilleminsternberg80})
where 
\[
{\mathcal L}_2\circ {\mathcal L}_1=\{ \gamma^{12}\in T^*G\; |\; \exists\;  \gamma^1\in {\mathcal L}_1,\gamma^2\in {\mathcal L}_2,  \mbox{  such that } \gamma^{12}=\tilde{m}(\gamma^1, \gamma^2)\}\, .
\]
Therefore, from a Lagrangian function $L: AG\rightarrow {\mathbb R}$ and a discretization map ${\mathcal R}: AG\rightarrow G$ we obtain the Lagrangian submanifold ${\mathcal L}^h=T^* {\mathcal R}(h\, dL(AG))$ of $T^*G$.  The composition with step sizes $a_1h, \ldots, a_sh$ is the Lagrangian submanifold
\begin{equation}\label{eq: Lagcomp}
{\mathcal L}^h_{a_1,\ldots, a_s}={\mathcal L}^{a_sh}\circ \ldots \circ {\mathcal L}^{a_1h}\, ,
\end{equation}
that generates a Poisson  composition method:
\begin{align*} 
\gamma&=\widetilde{\alpha}^{-1}(\mu_{k})\cap T^* {\mathcal R}({\mathcal L}^h_{a_1,\ldots, a_s})\subset T^*G\, , \\
    \mu_{k+1}&=\tilde{\beta}(\gamma)\, .
\end{align*}  
If ${\mathcal R}$ is a symmetric discretization map and the coefficients $a_1, \ldots, a_s$ verifies 
\begin{align*}
a_1+\ldots+a_s&=1\, ,\\
a^3_1+\ldots+a^3_s&=0\, ,
\end{align*}
then the Poisson method derived from ${\mathcal L}^h_{a_1,\ldots, a_s}$ is at least of order 3 (see \cite{hairer}). 

Another interesting possibility consists on  using  the discretization map ${\mathcal R}$ and the corresponding adjoint  map ${\mathcal R}^*$. If we denote by
${\mathcal L}^h_*=T^*{\mathcal R}^*(h\,dL(AG))$,  then the operation
\begin{equation}\label{eq:comp-3}
{\mathcal L}_*^{b_sh}\circ {\mathcal L}^{a_sh}\circ \ldots \circ {\mathcal L}^{b_1h}_*\circ {\mathcal L}^{a_1h}
\end{equation}
generates a Poisson composition method for appropriate conditions on the coefficients $a_l$ and $b_l$, $1\leq l\leq s$, (see \cite{hairer}).
For instance, 
\begin{equation}\label{eq:comp-3-example}
 {\mathcal L}^{h/2}\circ {\mathcal L}^{h/2}_*
\end{equation}
is always a second-order method.

\section{Conclusions and future work}\label{Sec:conclusion}
In this paper we have introduced the notion of retraction and discretization maps for Lie groupoids showing the potential to obtain Poisson integrators for different types of Hamiltonian and Lagrangian systems. Some interesting future developments are briefly commented in the following points. 

\begin{enumerate}
\item Given a discretization map ${\mathcal R}: AG\rightarrow G$ and a Lagrangian function $L: AG\rightarrow {\mathbb R}$ we can define the following discrete Lagrangian $L_d: G\rightarrow {\mathbb R}$
\[
L_d(g)=h L\left( \frac{1}{h}{\mathcal R}^{-1}(g)\right)\; .\]
Using this type of  discrete Lagrangians it is possible to derive new methods as in Chapter 2 in \cite{marsden-west, MMM06Grupoides}.

\item Another different technique to derive geometric integrators is based on splitting techniques \cite{hairer}. Assume that the initial Hamiltonian function $H: A^*G\rightarrow {\mathbb R}$ splits as
$H=H_1+H_2$ where $H_i: A^*G\rightarrow {\mathbb R}$. Assuming that it is possible to find the exact flows
\[
\varphi_h^{H_1}, \quad \varphi_h^{H_2}\, ,
\]
then it is possible to obtain Poisson integrators using, for instance,
\[
\varphi_h^{H_1}\circ \varphi_h^{H_2}, \quad\varphi_h^{H_2}\circ \varphi_h^{H_1},\quad \varphi_{h/2}^{H_1}\circ \varphi_{h}^{H_2}\circ \varphi_{h/2}^{H_1}\, \ldots
\]
These Poisson integrators numerically solve the dynamics of the original system $H: A^*G\rightarrow {\mathbb R}$.  In a future paper, we will compare the splitting methods with our methods based on retraction and discretization maps. 
\item The proposed techniques in this paper are also adaptable to other interesting cases: systems with holonomic and nonholonomic constraints, systems with external forces, explicit time-dependent systems... 

\item Lagrangian mechanics on Lie algebroids (see \cite{weinstein96}) gives a unified point of view beyond the classical systems defined on the tangent bundle of the configuration manifold, that includes systems determined by Lagrangian functions on Lie algebras, Atiyah algebroids, action Lie algebroids (see Section \ref{Sec:Integrator}). The evolution of all these systems  are jointly described by the  integral curves of a second order differential equation defined on the corresponding Lie algebroid. In a future paper we will develop geometric integrators for these second order differential equations. As a consequence, we will obtain geometric integrators for the equations of systems 
reduced by a symmetry Lie group and, therefore,  automatically
we will preserve the associated momentum map \cite{Marsden-Ratiu}.
For instance,  consider a first-order differential equation on a Lie algebra: $\dot{\xi}=\Gamma(\xi)$  derived from the forced Euler-Poincar\'e equations: 
\[
\frac{d}{dt}\left(\frac{\delta l}{\delta \xi}\right)=ad^*_{\xi} \left(\frac{\delta l}{\delta \xi}\right)+F(\xi)\, 
\]
where $l: {\mathfrak g}\rightarrow {\mathbb R}$ is a Lagrangian defined on a Lie algebra and $F: {\mathfrak g}\rightarrow {\mathfrak g}^*$ is an external force (see, for instance, \cite{double}). Then, from a discretization map
$R_d: TG\rightarrow G\times G$ given by
${ R}_d(g, \dot{g})=(g, g{\rm exp}(g^{-1}\dot{g}))$, we obtain the generalized discretization map using the tangent lift $TR_d: TTG\rightarrow T(G\times G)$(see Section \ref{Sec:Lift}) and then since it is a $G$-equivariant discretization map (see Section \ref{Sec:reduction}) obtain
a new generalized discretization map ${\mathcal R}\equiv\widehat{TR_d}: TTG/G\rightarrow T(G\times G)$ that after left-trivialization we obtain: 
\[
\begin{array}{rcl}
{\mathcal R}:TTG/G\equiv {\mathfrak g}\times {\mathfrak g}\times {\mathfrak g}&\rightarrow& T(G\times G)/G\equiv {\mathfrak g}\times { G}\times {\mathfrak g}\\
(\xi_1, \xi_2, \xi_3)&\longmapsto&(\xi_2, {\rm exp}(\xi_1), Ad_{{\rm exp}(\xi_1)^{-1}}\xi_2+d^L{\rm exp}_{\xi_1}(\xi_3))\, .
\end{array}
\]
The first-order differential equation $\dot{\xi}=\Gamma(\xi)$ can be interpreted as the  section 
$\xi\longmapsto (\xi, \xi, \Gamma(\xi))$
of $\hbox{pr}_1: {\mathfrak g}\times {\mathfrak g}\times {\mathfrak g}\rightarrow {\mathfrak g}$ (see \cite{MMM3-1}).
Considering now the set
\[
\Gamma_d=\{{\mathcal R}(h\xi, \xi, h \Gamma(\xi)), \xi\in {\mathfrak g}\}\subset {\mathfrak g}\times { G}\times {\mathfrak g}
\]
we obtain  that the  corresponding discrete equation 
\begin{align*}
\frac{\xi_{k+1}-\xi_k}{h}&=d^L{\rm exp}_{h\xi_k}(\Gamma(\xi_k))\\
&=d^L{\rm exp}_{h\xi_k}\left(W_k^{-1}
ad^*_{\xi_k} \left(\frac{\delta l}{\delta \xi}(\xi_k)\right)+W_k^{-1}F(\xi_k)\right)\; ,
\end{align*}
where $W_k$ is the hessian matrix of $l$: $W_k=\left(\frac{\partial ^2 l}{\partial \xi^2}(\xi_k)\right)$.
To obtain the dynamics on $TG$ we need to add the reconstruction equation: $g_{k+1}=g_k\,{\rm exp}(h\xi_k)$.
These methods can be used to derive, for instance, Newmark methods for this extended class of second-order systems (see \cite{Newmark,Kane}). 

\item  Applications to optimization in neural networks. 
A promising application of the notion of retraction and discretization maps on Lie groupoids is to derive  novel techniques in the design of neural networks and their relation to obtaining minima for loss functions with symmetry (see \cite{10.5555/3540261.3542225,Zhao2022SymmetriesFM} and references therein). 

In neural networks the parameters are weights $W_a\in {\mathbb R}^{n_a\times n_{a-1}}$ for each layer $1\leq a\leq f$, the total parameter space is 
${\bf Param}={\mathbb R}^{n_{f}\times n_{{f-1}}}\times \ldots \times{\mathbb R}^{n_1\times n_0}$ and the loss function
$
{\mathcal L}: {\bf Param}={\mathbb R}^{N}\rightarrow {\mathbb R}
$
defined in terms of a  Data given as a set of  ${\mathbb R}^{n_0\times n_{f}}$ and $N=n_0(n_1\ldots n_{f-1})^2n_{f}$. For simplicity, we assume  that the parameters are defined on an euclidean space but it is possible to generalize it to a differentiable manifold structure. 

In many cases of interest the function ${\mathcal L}$ has 
some symmetries, whose study is a key design principle for neural networks. 
In particular, in many cases of interest, the symmetry is represented by a Lie group of symmetries $G$ \cite{AM87} and it is possible to apply standard reduction techniques  and to obtain a reduced loss function $\tilde{\mathcal L}: {\bf Param}/G\rightarrow {\mathbb R}$. 
This is for instance the case of  neural networks with normalization layers which introduce a scale symmetry in the Loss function ${\mathcal L}$ (as in the case of the popular batch normalization).
 The techniques developed in this paper allow to analyze in an intrinsic way the important problem of symmetry breaking \cite{10.5555/3540261.3542225} and the importance in neural network problems in the reduced setting working with Bregmann Lagrangians defined on $T{\bf Param}/G$ which is a Lie algebroid.  Using a generalized discretization map  we can construct a discrete Lagrangian defined on the Lie groupoid $({\bf Param}\times {\bf Param})/G$ and  we can directly apply  discrete variational techniques, as in \cite{campos}, using  the framework  developed by some of the authors in \cite{MMM06Grupoides}.

\end{enumerate}

 \begin{appendices}
 	\section{Lie groupoids}\label{App:lieGroupoid}

    We review some definitions and basic constructions on Lie groupoids and algebroids. For more on the subject, we refer the reader to the monographs	~\cite{2021CrainicBook} and K.\ Mackenzie's
 	book \cite{Mackenzie}.
    
 	As associated to every Lie groupoid there is a Lie algebroid, we first recall the notion of a Lie algebroid.
  
 	A \textbf{Lie algebroid} is a vector bundle $\tau:A\rightarrow M$
 	endowed with the following data:
 	\begin{itemize}
 		\item A bundle map $\rho: A\rightarrow TM$ called the anchor map.
 		
 		\item A Lie bracket $\lcf\cdot, \cdot \rcf$  on the space of sections $\Gamma(\tau)$
 		satisfying the Leibniz identity, i.e.
 		\[
 		\lcf X, fY\rcf=f\lcf X, Y\rcf+\rho(X)(f)Y
 		\]
 		for all $X, \ Y\in \Gamma(\tau)$ and any $f\in C^\infty(M)$.
 	\end{itemize}

  Thus a Lie algebroid is defined by the triple $(\tau\colon A\rightarrow M, \rho, \lcf \cdot,\cdot \rcf)$.
  
  We emphasize that every Lie algebroid structure $(\lcf\cdot,\cdot \rcf, \rho)$ on a vector bundle $\tau\colon A\rightarrow M$ induces a linear Poisson structure $\Lambda_{A^*}$ on the dual bundle $\tau^*: A^* \rightarrow M$. In fact, the Poisson bracket $\{\cdot, \cdot\}$ on $A^*$ is characterized by the following conditions:
  $$\{X^l,Y^l\}=-\lcf X,Y\rcf^l, \quad \{X^l,g\circ \tau^*\}=-\rho(X)(g)\circ \tau^*, \quad \{f\circ \tau^*,g\circ \tau^*\}=0, $$
 	for $X,\, Y \in \Gamma(A)$, $f,\,g\in {\mathcal C}^\infty(M)$ and $X^l\colon A^*\rightarrow \mathbb{R}$ is the fiberwise linear function on $A^*$ given by 
    $$X^l(\mu_x)=\langle \mu_x,X(x) \rangle, \quad \mbox{for } \mu_x\in A^* \mbox{ and 
    } x\in M.$$

 	 	\paragraph{Groupoids.}
 	A groupoid is a set $G$ equipped with the following  data:
 	\begin{enumerate}
 		\item Another set $M$, called the base;
 		\item Two surjective maps $\alpha\colon G\to M$ and $\beta\colon G\to M$, called, respectively,  the source and target projections. So, we visualize an element $g\in G$ as an arrow from $\alpha(g)$ to $\beta(g)$:
 		$$
 		\xymatrix{*=0{\stackrel{\bullet}{\mbox{\tiny
 						$x=\alpha(g)$}}}{\ar@/^1pc/@<1ex>[rrr]_g}&&&*=0{\stackrel{\bullet}{\mbox{\tiny
 						$y=\beta(g)$}}}}
 		$$
 		\item A partial multiplication, or composition map, $m\colon G_2\to G$ defined on the subset $G_2$ of $G\times G$:
 		\[
 		G_2=\left\{ (g,h)\in G\times G\mid \beta(g)=\alpha(h) \right\}.
 		\]
 		The multiplication is denoted by $m(g, h)=gh$ and it verifies the following properties:
 		\begin{enumerate}
 			\item $\alpha(gh)=\alpha(g)$ and $\beta(gh)=\beta(h)$.
 			\item $(gh)k=g(hk)$.
 			$$\xymatrix{*=0{\stackrel{\bullet}{\mbox{\tiny
 							$\alpha(g)=\alpha(gh)$}}}{\ar@/^2pc/@<2ex>[rrrrrr]_{gh}}{\ar@/^1pc/@<2ex>[rrr]_g}&&&*=0{\stackrel{\bullet}{\mbox{\tiny
 							$\beta(g)=\alpha(h)$}}}{\ar@/^1pc/@<2ex>[rrr]_h}&&&*=0{\stackrel{\bullet}{\mbox{\tiny
 							$\beta(h)=\beta(gh)$}}}}$$
 		\end{enumerate}
 		\item An identity section $\epsilon\colon  M \to G$ such that
 		\begin{enumerate}
 			\item $\epsilon(\alpha(g))g=g$ and $g\epsilon(\beta(g))=g$ for all $g\in G$,
 			\item $\alpha(\epsilon(x))=\beta(\epsilon(x))=x$ for all $x\in M$.
 		\end{enumerate}
 		
 		\item An inversion map $i \colon  G \to G$, to be denoted simply by $i(g)=g^{-1}$, such that
 		\begin{enumerate}
 			\item $g^{-1}g=\epsilon(\beta(g))$ and $gg^{-1}=\epsilon(\alpha(g))$.
 			$$\xymatrix{*=0{\stackrel{\bullet}{\mbox{\tiny
 							$\alpha(g)=\beta(g^{-1})$}}}{\ar@/^1pc/@<2ex>[rrr]_g}&&&*=0{\stackrel{\bullet}{\mbox{\tiny
 							$\beta(g)=\alpha(g^{-1})$}}}{\ar@/^1pc/@<2ex>[lll]_{g^{-1}}}}$$
 		\end{enumerate}
 	\end{enumerate}

 	A groupoid $G$ over a base $M$ is denoted by $\xymatrix@1@C=1.5em{
 		G\ar@<.3ex>[r]^{\alpha}\ar@<-.3ex>[r]_{\beta}&M
 	}$ or simply $G\rightrightarrows M$.
 	
 	It is easy to see that $\epsilon$ must be injective, so there is a
 	natural  identification between $M$ and $\epsilon(M)$. 
 	
 	\paragraph{Lie groupoids.}
 	
 	A groupoid, $G \rightrightarrows M$, is said to be a Lie
 		groupoid if $G$ and $M$ are differentiable manifolds, all the structural maps
 	are differentiable and besides, $\alpha$ and $\beta$ are differentiable
 	submersions. If $G \rightrightarrows M$ is a Lie groupoid, then $m$
 	is a submersion, $\epsilon$ is an embedding and $\iota$ is a
 	diffeomorphism. Notice that since $\alpha$ and $\beta$ are
 	submersions, the $\alpha$ and $\beta$-fibers are submanifolds. The same properties
 	imply that $G_2$ is a submanifold. We will use $G^x=\alpha^{-1}(x)$,
 	$G_y=\beta^{-1}(y)$ and $G^x_y=\alpha^{-1}(x)\cap\beta^{-1}(y)$.
 	
 	\paragraph{Left and right multiplications.}
 	Given $g\in G_y^x$, so $g:x\to y$, we can define two (bijective) mappings
 	$L_{g}\colon G^y\to G^x$ and $R_{g}\colon G_x\to G_y$, which are the
 	left translation by $g$ and the right translation by $g$,
respectively. These diffeomorphisms are given by
 	\begin{equation}\label{leftrightmultiplication}
 	\begin{array}{lcr}
 	\begin{array}{rccl}
 	L_{g}\colon &G^y&\longrightarrow &G^x \\ 
 	&h &\mapsto&L_{g}(h) = gh
 	\end{array}
 	&; &
 	\begin{array}{rccl}
 	R_{g}\colon &G_x&\longrightarrow& G_y \\ 
 	&h& \mapsto &R_{g}(h) = hg,
 	\end{array}
 	\end{array}
 	\end{equation}
 	where $(L_{g})^{-1} = L_{g^{-1}}$ and $(R_{g})^{-1} = R_{g^{-1}}$.
 	
 	\paragraph{Bisections.}
 	A submanifold $L\subset G$ is called a bisection of $G$ if the
 	restricted maps, $\alpha_{|L}: \ L\rightarrow M$ and $\beta_{|L}: \
 	L\rightarrow M$ are both diffeomorhisms. Consequently, for any
 	bisection $L\subset G$, there is a  corresponding $\alpha$-section
 	$L_\alpha =(\alpha_{|L})^{-1}: \ M\rightarrow L$, where $\beta\circ
 	L_\alpha:\ M\rightarrow M$ is a diffeomorphism. Likewise, there is a
 	$\beta$-section $L_\beta=(\beta_{|L})^{-1}: \ M\rightarrow G$, where
 	$\alpha\circ L_{\beta}=(\beta\circ L_\alpha)^{-1}:\ M \rightarrow M $
 	is a diffeomorphism. More generally, $L\subset G$ is called a local
 	bisection if the restricted maps $\alpha_{|L}$ and $\beta_{|L}$ are
 	local diffeomorphisms onto open sets, $U,\ V\subset M$,
 	respectively. Local bisections on a Lie groupoid always exist.
 	
 	\paragraph{Invariant vector fields.}
 	A vector field $X$ on $G$ is said to be
 	left-invariant (resp., right-invariant) if it is
 	tangent to the fibers of $\alpha$ (resp., $\beta$) and
 	\[X(gh) = (T_{h}L_{g})(X(h))  \quad
 	\Big(\textrm{ resp. }X(gh)= (T_{g}R_{h})(X(g)) \Big)\]for all $(g,h) \in G_{2}$.
 	
 	\paragraph{Morphisms.}
 	Given two Lie groupoids $G \rightrightarrows M$ and $G'
 	\rightrightarrows M'$, a morphism of Lie groupoids is a
 	smooth map $\Phi\colon  G \to G'$ such that
 	\begin{enumerate}
 		\item If $(g, h) \in G_{2} $ then $ (\Phi(g), \Phi(h)) \in (G')_{2}$ and
 		\item $\Phi(gh) = \Phi(g)\Phi(h)$.
 	\end{enumerate}
 	
 	A morphism of Lie groupoids $\Phi\colon  G \to G'$ induces a smooth map
 	$\phi\colon  M \to M'$ in such a way that the source, the
 	target and the identity section commute with the morphism, i.e.
 	\[
 	\alpha' \circ \Phi = \phi \circ \alpha, \makebox[.3cm]{}
 	\beta' \circ \Phi = \phi \circ \beta, \makebox[.3cm]{} \Phi
 	\circ \epsilon = \epsilon' \circ \phi,
 	\]
 	$\alpha$, $\beta$ and $\epsilon$ (resp., $\alpha'$, $\beta'$ and
 	$\epsilon'$) being the source, the target and the identity sections
 	of $G$ (resp., $G'$).

 	\subsection{Lie algebroid associated to a Lie groupoid}\label{App:LieAlgForGroup}

 	Given a Lie groupoid $G$ we denote by
 	$AG=\ker(T\alpha)_{|\epsilon(M)}$, i.e., the set of vectors tangent to
 	the $\alpha$-fibers restricted to the units of the groupoid. Since the
 	units $\im(\epsilon)$ are diffeomorphic to the base manifold $M$, the set $AG$ is understood as a vector bundle $\tau\colon  AG \to M$. The
 	reader should keep this identification in mind, because it is going to be
 	used implicitly in some places ($M\equiv \im(\epsilon)\subset G$).
 	
 	It is easy to prove that there exists a
 	bijection between the space of sections $\Gamma(\tau)$ and the set of
 	left-invariant (resp., right-invariant) vector fields on $G$. If $X$
 	is a section of $\tau\colon  AG \to M$, the corresponding left-invariant
 	(resp., right-invariant) vector field on $G$ will be denoted
 	$\lvec{X}$ (resp., $\rvec{X}$), where
 	\begin{equation}\label{linv}
 	\lvec{X}(g) = (T_{\epsilon(\beta(g))}L_{g})(X(\beta(g))),
 	\end{equation}
 	\begin{equation}\label{rinv}
 	\left(\textrm{resp.,\ } \rvec{X}(g) = -(T_{\epsilon
 		(\alpha(g))}R_{g}\circ i)( X(\alpha(g)))\right),
 	\end{equation}
 	for $g \in G$.

 	Using the above facts, we may introduce a Lie algebroid structure
 	$(\lcf\cdot , \cdot\rcf, \rho)$ on $AG$:
 	\begin{enumerate}
 		
 		\item The anchor map $\rho\colon AG\to TM$ is
 		\[
 		\rho(X)(x) = (T_{\epsilon(x)}\beta)(X(x))
 		\]
 		for $X\in \Gamma(\tau)$ and $x \in M$.
 		
 		\item The Lie bracket on the space of sections   $\Gamma(\tau)$,
 		denoted by $\lcf \cdot, \cdot\rcf$, is defined by
 		\[
 		\lvec{\lcf X, Y\rcf} = [\lvec{X}, \lvec{Y}],
 		\]
 		for $X, Y \in \Gamma(\tau)$.

 	\end{enumerate}

    	Note that
 		\[
 		\rvec{\lcf X, Y\rcf} = -[\rvec{X}, \rvec{Y}], \makebox[.3cm]{}
 		[\rvec{X}, \lvec{Y}] = 0,
 		\]
 		\[
 		Ti\circ \rvec{X}=-\lvec{X}\circ i,\;\;\;\; Ti\circ
 		\lvec{X}=-\rvec{X}\circ i,
 		\]
 		(for more details, see \cite{Mackenzie}). The dual bundle of $AG$ will be denoted by $A^*G$. 

        A Lie groupoid morphism 
        $$\xymatrix{ G \ar[rr]^{\Phi}  \ar[d]<-4pt>  \ar[d]<4pt> && G'\ar[d]<-4pt>  \ar[d]<4pt> \\ M \ar[rr]^{\phi} && M'}
        $$
        induces a Lie algebroid morphism
         $$\xymatrix{ AG \ar[rr]^{A\Phi}  \ar[d]^{\tau}&& AG'\ar[d]^{\tau'} \\ M \ar[rr]^{\phi} && M'}
        $$
        given by 
        $$A\Phi(v_{\epsilon (x)})=T_{\epsilon(x)}\Phi(v_{\epsilon (x)
        }) \in A_{\phi(x)}G'\; , \mbox{   for   } v_{\epsilon(x)}\in A_xG.$$

 	\subsection{Examples of Lie groupoids}\label{App:ExampleLieGroupoids}
 	
 	Next, we will present some examples of Lie groupoids. The
 	corresponding associated Lie algebroid is pointed out in each case.
 	
 	\subsubsection{Lie Groups}\label{App:ExampleLieGroup}
 	
 	Any  Lie group $G$ is a Lie groupoid over
 	$\{ e \}$, the identity element of $G$. 
 	\begin{enumerate}
 		\item The source, $\alpha$, is the constant map $\alpha(g)=e$.
 		\item The target, $\beta$, is the constant map $\beta(g)=e$.
 		\item The identity map is $\epsilon(e)=e$.
 		\item The inversion map is $i(g) =g^{-1}$.
 		\item The multiplication is $m(g,h) =g\cdot h$, for
 		any $g$ and $h$ in $G$.
 	\end{enumerate}
 	
 	\paragraph{Associated Lie algebroid:}
 	The Lie algebroid
 	associated with $G$ is just the  Lie algebra ${\mathfrak g}$ of $G$ in a
 	straightforward way.
 	
 	\subsubsection{The Pair or Banal Groupoid} \label{App:ExamplePairGroup}
 	
 	Let $M$ be a manifold. The
 	product manifold $M \times M$ is a Lie groupoid over $M$ called the pair or banal
 		groupoid.  Its structure mappings are:
 	\begin{enumerate}
 		\item The source, $\alpha$, is the projection onto the first
 		factor.
 		\item The target, $\beta$, is the projection onto the second factor.
 		\item The identity map is $\epsilon(x)
 		= (x, x)$, for all $x \in M$.
 		\item The inversion map is $i(x, y) = (y, x)$.
 		\item The multiplication is $m((x, y), (y, z)) = (x, z)$, for
 		$(x, y), (y, z) \in M \times M$.
 	\end{enumerate}
 	
 	\paragraph{Associated Lie Algebroid:} 
 	If $x$ is a point of $M$, it follows that
 	\[
 	A_x=\ker(T\alpha)_{\epsilon(x)}=\{0_x\}\times T_xM
 	\]
 	which gives the vector bundle structure and given $(0_x,X_x)\in A_x$,
 	then \[\tau (0_x,X_x)=x.\]
 	\begin{enumerate}
 		\item The anchor is given by the projection over the second factor
 		$\rho(0_x,X_x)=X_x\in T_xM$.
 		\item The Lie bracket on the space of sections, $\Gamma(\tau)$, is the
 		Lie bracket of vector fields on the second factor $\lcf (0,X),(0,Y)\rcf=(0,[X,Y])$.
 	\end{enumerate}
 	In conclusion, we have that the Lie algebroid $A(Q \times Q) \to Q$ may be identified 
 	with the standard Lie algebroid $\tau_Q: TQ \to Q$.
 	
 	\subsubsection{Atiyah or Gauge Groupoids}\label{App:Atiyah}
 	
 	Let $\pi\colon  P \rightarrow M$ be a
 	principal $G$-bundle. Then the free action $\Phi\colon  G \times P \to
 	P$ induces the diagonal action  $\Phi'\colon  G \times (P \times
 	P) \to P\times P$ by $\Phi'(g, (q, q')) = (gq, gq')$. Moreover, one may consider the quotient manifold $(P
 	\times P) / G$ and it admits a Lie groupoid structure over $M$,  called the
Atiyah or Gauge groupoid (see, for instance, \cite{Mackenzie,MMM06Grupoides}). We describe now the structural mappings.
 	\begin{enumerate}
 		\item The source, ${\alpha}\colon  (P \times P) / G \to M $ is
 		given by $[(q,
 		q')] \mapsto \pi(q)$.
 		\item The target, ${\beta}\colon  (P \times P) / G \to M$  is given by
 		$[(q,
 		q')] \mapsto \pi(q')$.
 		\item The identity map, ${\epsilon}\colon  M \to (P \times P) / G$ is
 		$x\mapsto [(q, q)], \; \mbox{ if } \pi(q) = x.$
 		\item The inversion map, ${i}\colon  (P \times P) / G \to (P
 		\times P) / G$ is $ [(q, q')] \mapsto [(q', q)]$.
 		\item The multiplication map ${m}\colon  ((P \times P) / G)_{2} \to (P
 		\times P) / G$ is $ ([(q, q')], [(gq', q'')]) \mapsto [(gq, q'')]$.
 	\end{enumerate}
 	
 	\paragraph{Associated Lie Algebroid:}
 	It easily follows that $A=\ker{T\alpha}_{\epsilon(M)}$ can be
 	identified with $TP/G$. Then the associated Lie algebroid is just
 	$\tau: TP/G\rightarrow M$, where $\tau$ is the obvious
 	projection and the Lie algebroid structure is provided by
 	\begin{enumerate}
 		\item The anchor, $\rho:TP/G\rightarrow TM$, is given by the quotient
 		of the natural projection map $T\pi:TP\rightarrow TM$. That is,
 		$\rho=\widetilde{T\pi}:TP/G\rightarrow TM$.
 		\item 
 		The space of sections of the vector bundle $\tau: TP/G \to P/G = M$ may be identified with the set of $G$-invariant vector fields on $P$. Under this identification, the Lie bracket on the space of sections is given by the standard Lie
 		bracket of vector fields. We remark that it is easy to see that the Lie bracket of
 		two $G$-invariant vector fields is another $G$-invariant vector field.
 	\end{enumerate}
 	
 	\subsubsection{Action Lie groupoids}\label{App:action}
 	
 	Let $G$ be a Lie group and let ${\Phi}\colon M\times G\to M$, $(x,g)\mapsto xg$, be a right action of $G$  on $M$.  Consider the action Lie groupoid $M\times G$ over $M$ with
 	structural maps given by
 	\begin{enumerate}
 		\item The source is ${\alpha}(x,g)=x$.
 		\item The target is ${\beta}(x,g)=xg$.
 		\item The identity map is ${\epsilon }(x)=(x,{e})$.
 		\item The inversion map is ${i}(x,g)=(xg, g^{-1})$.
 		\item The multiplication is $m((x,g),(xg,g'))=(x,gg')$
 	\end{enumerate}
 	See, for instance, \cite{Mackenzie,2015JCDavidLocalDiscrete} for the details.
 	
 	\paragraph{Associated Lie Algebroid:}
 	Now, let ${\mathfrak g}=T_{{ e}}G$ be the Lie algebra of $G$. Given
 	$\xi\in\mathfrak{g}$ we will denote by $\xi_M$ the infinitesimal generator of the action
 	$\Phi\colon M\times G\to M$. Consider now the
 	vector bundle $\tau:M\times \mathfrak{g}\rightarrow M$ where $\tau$ is
 	the projection over the first factor, endowed with the following
 	structures:
 	\begin{enumerate}
 		\item The anchor is $\rho(x,\xi)=\xi_M(x)$.
 		\item The Lie bracket on the space of sections is given by $\lcf
 		\widetilde{\xi},\widetilde{\eta}\rcf(x)=[\widetilde{\xi}(x),\widetilde{\eta}(x)]
 		+
 		(\widetilde{\xi}(x))_M(x)(\widetilde{\eta})-(\widetilde{\eta}(x))_M(x)(\widetilde{\xi})$
 		for $\widetilde\xi$, $\widetilde\eta\in \Gamma(\tau)$ and $x\in M$.
 	\end{enumerate}
 
 \section{Symplectic Groupoids}\label{App:SymplGroupoid}
 
We review some results and basic constructions on symplectic groupoids (for more details see~\cite{coste} and~\cite{2021CrainicBook}). 
 \begin{definition}
 	
 	A \textbf{symplectic groupoid} is a Lie groupoid $G\rightrightarrows M$
 	equipped with a symplectic form $\omega$ on $G$ such that the graph of
 	the multiplication $m\ \colon \  G_2\to G$, that is, the set $\left\{ (g,h,gh)\,|\, (g,h)\in G_2 \right\}$, is a Lagrangian submanifold of $G\times G\times G^-$ with the product symplectic form, where $G^-$ denotes $G$ endowed with the symplectic form $-\omega$.
 \end{definition}
 
 \begin{remark}
 	One may prove that $(G, \omega)$ is a symplectic groupoid
 	in the above sense if and only if the $2$-form $\omega$ is multiplicative. We
 	say that the form $\omega$ is multiplicative if and only if 
 	\[
 	m^*\omega=\pi_1^*\omega+\pi_2^*\omega
 	\]
 	where $\pi_i:G_2\rightarrow G$, $i=1,\ 2$ are the projections over the
 	first and second factor.
 \end{remark}
 
 Assume that $G\rightrightarrows M$ is a Lie groupoid with source and
 target $\alpha$ and $\beta$ respectively and identity section,
 inversion map and multiplication $\epsilon$, $i$ and $m$, then there is an induced Lie
 groupoid structure $T^*G\rightrightarrows A^*G$ which we define below
 (see Appendix~\ref{App:lieGroupoid} for notation). The cotangent bundle $T^*G$ turns out to be a symplectic groupoid with the canonical symplectic form $\omega_G$ (see 
 \cite{coste,2021CrainicBook}). Let us define the composition law on $T^*G$, which will be written as
 $\oplus_{T^*G}$, and let $\widetilde \alpha$, $\widetilde \beta$,
 $\widetilde \epsilon$ and $\widetilde i$ stand for the source, target,
 identity section and inversion map of the groupoid structure on $T^*G$
 defined below.  Each one of these maps will cover the corresponding structural map of $G$.
 
 \begin{enumerate}
 	\item The source map is
 	defined in a way that the following diagram is commutative
 	\begin{equation}\label{comm1}
 	\xymatrix{
 		T^*G \ar[r]^{\widetilde\alpha}\ar[d]_{\pi_G}\ar@{}[dr]|\circlearrowright
 		& A^*G\ar[d]^\tau\\
 		G\ar[r]^\alpha & M
 	}
 	\end{equation}
 	Assume that $g\in G^x_y=\alpha^{-1}(x)\cap \beta^{-1}(y)$ and let $X\in A_xG$. So, by the definition of Lie algebroid
 	associated to a Lie groupoid, $X$ is a tangent vector at $\epsilon(x)$ that is tangent
 	to $G^x=\alpha^{-1}(x)$.  Then, since $i\ \colon\ G^x\rightarrow G_x$
 	(where $G_x=\beta^{-1}(x)$) is a
 	diffeomorphism between the $\alpha$-fibers and $\beta$-fibers, we
 	conclude that $-(T_{\epsilon(x)}i)(X)\in T_{\epsilon(X)}G_x$, i.e., it is
 	tangent to the $\beta$-fiber. Now,
 	recalling that right multiplication is a bijection $R_g:G_x\rightarrow
 	G_y$, with $g \in G^x_y$, we have that $T_{\epsilon(x)}R_g(-T_{\epsilon(x)}i(X))\in
 	T_gG$ and we can finally define
 	\begin{equation}\label{eq:ApB_source}
 	\widetilde\alpha(\varpi_g)(X)=\varpi_g(-T_{\epsilon(x)}(R_g\circ i) (X)).
 	\end{equation}
 	In particular, if we are dealing with a section $X\in\Gamma(\tau)$, the previous
 	construction leads us to right-invariant vector fields:
 	\begin{equation}\label{alpha}\widetilde\alpha(\varpi_g)(X)=\varpi_g(\rvec{X}(g)),\text{ for }X\in\Gamma(\tau),\end{equation}
 	where $\tau\ \colon AG \to M$ is the Lie algebroid associated to $G$ (see equation
 	\eqref{rinv} in Appendix~\ref{App:lieGroupoid}). 
 	
 	Notice that $\widetilde \alpha\ \colon\ T^*G\to A^*G$ is a surjective submersion.
 	
 	\item In an analogous way, the target map, $\widetilde \beta\ \colon\ T^*G\to A^*G$, is defined so that the following diagram commutes
 	\begin{equation}\label{comm2}
 	\xymatrix{
 		T^*G \ar[r]^{\widetilde\beta}\ar[d]_{\pi_G}\ar@{}[dr]|\circlearrowright
 		& A^*G\ar[d]^\tau\\
 		G\ar[r]^\beta & M.
 	}
 	\end{equation}
 	Now, given $\varpi_g$ and assuming that $g\in G^x_y$, since left
 	multiplication is defined on the $\alpha$-fibers, $L_g\ \colon G^y\rightarrow
 	G^x$, then $T_{\epsilon(x)}L_g(X)\in T_{g}G$ and we can define
 	\begin{equation}\label{eq:ApB_target}
 	\widetilde\beta(\varpi_g)(X)=\varpi_g(T_{\epsilon(x)}L_g(X)).
 	\end{equation}
 	In other words, 
 	\begin{equation}\label{beta}\widetilde\beta(\varpi_g)(X)=\varpi_g(\lvec{X}(g)),\text{ for
 	}X\in\Gamma(\tau)\end{equation}from equation \eqref{linv}, Appendix~\ref{App:lieGroupoid}.

 	\item The identity map, $\widetilde \epsilon : A^*G\to
 	T^*G$, is defined so that
 	\[
 	\xymatrix{
 		A^*G \ar[r]^{\widetilde\epsilon}\ar[d]_{\tau}\ar@{}[dr]|\circlearrowright
 		& T^*G\ar[d]^{\pi_G}\\
 		M\ar[r]^\epsilon & G
 	}
 	\]
 	commutes. Take $v\in T_{\epsilon(x)}G$. We can obtain an element of $A_xG\subset T_{\epsilon(x)}G$ by computing $v-T(\epsilon\circ\alpha)(v)$. This tangent vector is indeed tangent to an $\alpha$-fiber since $T\alpha$ of it is $0$, using an argument analogous to that in the definition of $\widetilde \alpha$. Then, for $\mu_x\in A^*_xG$, we define
 	\[ \widetilde \epsilon(\mu_x)(v)=\mu_x(v-T(\epsilon\circ\alpha)(v)). \]
 	\item The inversion map, $\widetilde i\colon T^*G\to T^*G$, is defined as a mapping from each $T^*_gG$ to $T^*_{g^{-1}}G$. If $X\in T_{g^{-1}}G$, then $Ti(X)\in T_gG$, so  for $\varpi_g\in T^*_gG$ we can define
 	\[ \widetilde i (\varpi_g)(X)=-\varpi_g(Ti(X)).\]
 	
 	\item The groupoid operation $\oplus_{T^*G}$ is defined for
 	these pairs $(\varpi_g,\nu_h)\in T^*G\times T^*G$ satisfying the
 	composability condition $\widetilde \beta(\varpi_g)=\widetilde
 	\alpha(\nu_h)$, which in particular implies that $(g,h)\in G_2$
 	using diagrams \eqref{comm1} and \eqref{comm2}. This condition can be rewritten as
 	\[ \varpi_g\circ TL_g=-\nu_h\circ TR_h\circ Ti\quad\text{on }A_{\beta(g)}G\subset T_{\epsilon(\beta(g))}G. \]
 	We define 
 	\[
 	\left( \varpi_g\oplus_{T^*G}\nu_h \right)(T_{(g,h)}m(X_g,Y_h))=\varpi_g(X_g)+\nu_h(Y_h),
 	\]
 	for $(X_g, Y_h)\in T_{(g,h)}G_2$. An explicit expression for $\oplus_{T^*G}$ using local bisections in the Lie groupoid $G$ can be found in \cite{2015JCDavidLocalDiscrete}.
 \end{enumerate}
 
 \begin{remark}
 	Note from equation \eqref{alpha} (resp.\ \eqref{beta}) that the definition of
 	$\widetilde\alpha$ (resp.
 	$\widetilde\beta$) is just given by
 	``translation'' via right-invariant vector fields (resp.\ left-invariant).
 \end{remark}
 \begin{remark}
 	Another interesting property is that the application $\pi_G\ \colon\  T^*G\to G$ is a
 	Lie groupoid morphism over the vector bundle projection $\tau_{A^*G}\colon A^*G\to M$.
 \end{remark}
 
 \begin{remark}
 	When the Lie groupoid $G$ is a Lie group, the Lie groupoid $T^*G$ is not in general a Lie group. The base $A^*G$ is identified with the dual of the Lie algebra $\mathfrak{g}^*$, and we have $\widetilde\alpha(\varpi_g)(\xi)=\varpi_g\left( TR_g \xi\right)$ and $\widetilde\beta(\varpi_g)(\xi)=\varpi_g\left( TL_g \xi\right)$, where $\varpi_g\in T^*_gG$ and $\xi\in {\mathfrak g}$.
 \end{remark}

 \subsection{Properties}\label{App:PropertiesSymplGroup}
 The following theorem will be crucial in the paper (see
 \cite{coste,2021CrainicBook} for a proof, as well as the references therein):
 
 \begin{theorem}\label{theorem-sg}
 	Let $G\rightrightarrows M$ be a symplectic groupoid, with symplectic 2-form $\omega$. We have the following properties:
 	\begin{enumerate}
 		\item For any point $g\in G_y^x$, the subspaces $T_g G_y$ and $T_gG^x$ of the symplectic vector space $(T_gG, \omega_g)$ are mutually symplectic orthogonal. That is,
 		\begin{equation}\label{dualpair}
 		T_g G_y=(T_gG^x)^{\perp}.
 		\end{equation}
 		\item The submanifold $\epsilon (M)$ is a Lagrangian submanifold of the symplectic manifold $(G, \omega)$.
 		\item The inversion map $i\ \colon\  G\to G$ is an anti-symplectomorphism of $(G, \omega)$, that is, $i^*\omega=-\omega$.
 		\item There exists a unique Poisson structure $\Pi$ on $M$ for which $\beta\ \colon\  G\to M$ is a Poisson map, and $\alpha\ \colon\  G\to M$ is an anti-Poisson map (that is, $\alpha$ is a Poisson map when $M$ is equipped with the Poisson structure $-\Pi$).
 	\end{enumerate}
 \end{theorem}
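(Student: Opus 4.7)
The plan is to exploit multiplicativity $m^*\omega=\pi_1^*\omega+\pi_2^*\omega$, equivalently the Lagrangian character of $\mathrm{Gr}(m)\subset G\times G\times G^-$, together with the identities $m(\epsilon(\alpha(g)),g)=g=m(g,\epsilon(\beta(g)))$ and $m(g,i(g))=\epsilon(\alpha(g))$. First I would establish the dimension relation $\dim G=2\dim M$ by comparing $\dim\mathrm{Gr}(m)=\frac{3}{2}\dim G$ (Lagrangian in $G\times G\times G$) with $\dim G_2=2\dim G-\dim M$ (pullback of a submersion along the diagonal in $M\times M$). This half-dimension fact is what makes the subsequent isotropy/Lagrangian comparisons tight.

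For (2), given $u_1,u_2\in T_xM$ the triples $(T\epsilon u_k,T\epsilon u_k,T\epsilon u_k)$ are tangent to $\mathrm{Gr}(m)$ at $(\epsilon(x),\epsilon(x),\epsilon(x))$, since a short computation of $T_{(\epsilon(x),\epsilon(x))}m$ at units gives $Tm(T\epsilon u,T\epsilon u)=T\epsilon u$; pairing them via the product form $\omega\oplus\omega\oplus(-\omega)$ collapses to $\omega(T\epsilon u_1,T\epsilon u_2)=0$, so $\epsilon(M)$ is isotropic, hence Lagrangian by the dimension count. For (3), applying the Lagrangian condition of $\mathrm{Gr}(m)$ at $(g,i(g),\epsilon(\alpha(g)))$ to the triples $(X,T_gi(X),T\epsilon(T\alpha X))$ yields $\omega(X,Y)+\omega(T_gi X,T_gi Y)-\omega(T\epsilon(T\alpha X),T\epsilon(T\alpha Y))=0$, and the last term vanishes by (2), giving $i^*\omega=-\omega$. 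For (1), dimensions already satisfy $\dim T_gG_y+\dim T_gG^x=2(\dim G-\dim M)=\dim G$, so it suffices to establish orthogonality; for $g\in G_y^x$ and any $h$ with $\alpha(h)=y$, multiplicativity at $(g,h)\in G_2$ applied to the pair of vectors $(X_1,0)$ and $(0,Y_2)$ in $T_{(g,h)}G_2$ (which forces $X_1\in T_gG_y$, $Y_2\in T_hG^y$) gives $\omega_{gh}(Tm(X_1,0),Tm(0,Y_2))=0$, and a curve computation shows $Tm(X_1,0)=TR_h(X_1)$, $Tm(0,Y_2)=TL_g(Y_2)$; since $TR_h\colon T_gG_y\to T_{gh}G_{\beta(h)}$ and $TL_g\colon T_hG^y\to T_{gh}G^x$ are isomorphisms, varying $h$ covers every point and every pair of vectors.

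For (4), the main obstacle, I would exploit (1) to show that for $f\in C^\infty(M)$ the Hamiltonian vector field $X_{\beta^*f}$ satisfies $\omega(X_{\beta^*f},V)=\langle df,T\beta V\rangle=0$ for every $V\in TG_y$, so $X_{\beta^*f}\in(TG_y)^{\perp}=TG^x$ by (1) and $T\alpha\cdot X_{\beta^*f}=0$; symmetrically, Hamiltonians of $\alpha$-basic functions are tangent to $\beta$-fibres. This is exactly Weinstein's dual pair configuration, which produces a unique Poisson structure $\Pi$ on $M$ characterised by $\beta^*\{f_1,f_2\}_\Pi=\{\beta^*f_1,\beta^*f_2\}_G$; the Jacobi identity for $\Pi$ descends from the symplectic Poisson bracket on $G$, uniqueness is immediate since $\beta$ is a surjective submersion, and the anti-Poisson property of $\alpha$ follows either by symmetry or, more elegantly, from the factorisation $\alpha=\beta\circ i$ combined with $i^*\omega=-\omega$ in (3).
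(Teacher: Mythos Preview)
The paper does not prove this theorem: it is stated in Appendix~\ref{App:SymplGroupoid} with only the remark ``see \cite{coste,2021CrainicBook} for a proof, as well as the references therein.'' There is therefore no in-paper argument to compare against; your proposal is a self-contained proof where the paper simply defers to the literature.

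On its own merits, your outline is the standard argument one finds in those references: the dimension identity $\dim G=2\dim M$ from $\dim\mathrm{Gr}(m)=\tfrac{3}{2}\dim G=\dim G_2=2\dim G-\dim M$; isotropy of $\epsilon(M)$ and $i^*\omega=-\omega$ by evaluating the Lagrangian condition of $\mathrm{Gr}(m)$ at the diagonal and inversion triples; orthogonality of source and target fibres from multiplicativity applied to $(X_1,0),(0,Y_2)\in T_{(g,h)}G_2$ (your ``varying $h$'' is unnecessary---take $h=\epsilon(\beta(g))$ directly); and the dual-pair descent for the Poisson structure. One point to tighten in (4): your argument shows $X_{\{\beta^*f_1,\beta^*f_2\}_G}\in\ker T\alpha=(\ker T\beta)^\perp$, hence $\{\beta^*f_1,\beta^*f_2\}_G$ is \emph{locally} constant on $\beta$-fibres; passing to global constancy (so that it is genuinely $\beta$-basic) requires either connectedness of the $\beta$-fibres or a direct use of multiplicativity along the identity section. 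The cited references handle this, but you should make explicit which route you take.
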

 
 \begin{remark} The theorem above states that the $\alpha$-fibers and
 	$\beta$-fibers are symplectically orthogonal. A pair of fibrations
 	satisfying that property are called a dual pair. This dual
 	pair property will be the keystone of our construction.
 \end{remark}
 
 \begin{remark}\label{remark:AppB} When dealing with the symplectic groupoid $T^*G$, where
 	$G$ is a groupoid, the Poisson structure on $A^*G$ is the
 	(linear) Poisson structure $\Lambda_{A^*G}$ on the dual vector bundle of $AG$ (see
 	Appendix~\ref{App:lieGroupoid}).
 \end{remark}

 \subsection{The Lie algebroid associated to a cotangent groupoid}\label{App:LieAlgCotangGroup}
 The Lie algebroid associated to the cotangent groupoid is precisely
 $\pi_{A^*G}\colon T^*(A^*G)\rightarrow A^*G$ and 
 a Lie algebroid isomorphism 
 $$\phi^{\omega_G}\colon A(T^*G)=V_{\widetilde{\epsilon}(A^*G)}\widetilde{\alpha}\rightarrow T^*(A^*G)$$
 is characterized by the following condition
 $$i_{X_{\widetilde{\epsilon}(\mu_x)}}\omega_G(\widetilde{\epsilon}(\mu_x))=-\left(T^*_{\widetilde{\epsilon}(\mu_x)} \widetilde{\beta}\right) (\phi^{\omega_G}(X_{\widetilde{\epsilon}(\mu_x)})), $$
 for $\mu_x\in A_x^*G$ and $X_{\widetilde{\epsilon}(\mu_x)}\in V_{\widetilde{\epsilon}(\mu_x)}\widetilde{\alpha}=A_{\mu_x}(T^*G).$

 Moreover, under the previous identification, the anchor map of $T^*(A^*G)\rightarrow A^*G$ is just the vector bundle morphism
 $$\Lambda^\sharp_{A^*G}\colon T^*(A^*G)\rightarrow T(A^*G),$$
 where $\Lambda_{A^*G}$ is the linear Poisson structure on $A^*G$ induced by the Lie algebroid structure on $AG$ (see Appendix~\ref{App:lieGroupoid}).

 \end{appendices}

\section*{Acknowledgements}

The authors acknowledge financial support from Grants PID2022-137909-NB-C21, PID2022-137909-NB-C22,  PCI2024-155047-2, TED2021-129455B-I00 and   CEX2023-001347-S funded by MICIU/AEI/10.13039/501100011033.

\bibliography{References}

\end{document}